\newlength\longest
\newcommand{\esp}{\mathbb{E}}
\newcommand{\prob}{\mathbb{P}}
\newcommand{\probn}{\mathbf{P}}
\newcommand{\var}{\mathbb{V}}
\newcommand{\alg}{\mathcal{F}}
\newcommand{\unit}{\mathsf{1}}
\newcommand{\re}{\mathbb{R}}
\DeclareMathOperator*{\epi}{epi}
\newcommand{\vertiii}[1]{{\left\vert\kern-0.4ex #1 
\kern-0.4ex\right\vert}}
\newcommand{\Lpnorm}[1]{\vertiii{\,#1\,}_{p}}
\newcommand{\Ldpnorm}[1]{\vertiii{\,#1\,}_{2p}}
\newcommand{\Lpddnorm}[1]{\vertiii{\,#1\,}_{\frac{p}{2}}}
\newcommand{\Lqnorm}[1]{\vertiii{\,#1\,}_{q}}
\newcommand{\Lnorm}[1]{\vertiii{\,#1\,}_{2}}
\newcommand{\Lqrtnorm}[1]{\vertiii{\,#1\,}_{4}}
\DeclareMathOperator*{\dist}{d}
\DeclareMathOperator*{\diam}{\mathcal{D}}
\DeclareMathOperator*{\argmin}{argmin}
\numberwithin{theorem}{section}
\newcommand{\TheTitle}{Variance-based extragradient methods with line search for stochastic variational inequalities} 
\newcommand{\TheAuthors}{A. Iusem, A. Jofr\'e, R. I. Oliveira, and P. Thompson}
\title{{\TheTitle}\thanks{Submitted to the editors DATE.
}}
\author{
Alfredo N. Iusem\thanks{Instituto de Matem\'atica Pura e Aplicada (IMPA), 
Rio de Janeiro, RJ, Brazil.
(\email{iusp@impa.br}).}
\and
Alejandro Jofr\'e\thanks{Centro de Modelamiento Matem\'atico (CMM \& DIM), 
Santiago, Chile.
(\email{ajofre@dim.uchile.cl}).}
\and
Roberto I. Oliveira\thanks{Instituto de Matem\'atica Pura e Aplicada (IMPA), 
Rio de Janeiro, RJ, Brazil.
(\email{rimfo@impa.br}). Roberto I. Oliveira's work was supported by a {\em Bolsa de Produtividade em Pesquisa} from CNPq, Brazil. His work in this article is part of the activities of FAPESP Center for Neuromathematics (grant \#2013/07699-0, FAPESP - S. Paulo Research Foundation).}
\and
Philip Thompson\thanks{Instituto de Matem\'atica Pura e Aplicada (IMPA), 
Rio de Janeiro, RJ, Brazil.
(\email{philip@impa.br}). Philip Thompson's work was supported by a CNPq Doctoral scholarship while he was a PhD student at IMPA with visit appointments at CMM. His work was conducted at IMPA and CMM.}
}
\begin{document}

\maketitle

% REQUIRED
\begin{abstract}
We propose dynamic sampled stochastic approximated (DS-SA) extragradient methods for stochastic variational inequalities (SVI) that are \emph{robust} with respect to an unknown Lipschitz constant $L$. We propose, to the best of our knowledge, the first provably convergent \emph{robust} SA \emph{method with variance reduction}, either for SVIs or stochastic optimization, assuming just an unbiased stochastic oracle and a large sample regime. This widens the applicability and improves, up to constants, the desired efficient acceleration of previous variance reduction methods, all of which still assume knowledge of $L$ (and, hence, are not robust against its estimate). Precisely, compared to the iteration and oracle complexities of $\mathcal{O}(\epsilon^{-2})$ of previous robust methods with a small stepsize policy, our robust method uses a DS-SA line search scheme obtaining the faster iteration complexity of $\mathcal{O}(\epsilon^{-1})$ with oracle complexity of $(\ln L)\mathcal{O}(d\epsilon^{-2})$ (up to log factors on $\epsilon^{-1}$) for a $d$-dimensional space. This matches, up to constants, the sample complexity of the sample average approximation estimator which does not assume additional problem information (such as $L$). Differently from previous robust methods for ill-conditioned problems, we allow an unbounded feasible set and an oracle with \emph{multiplicative noise} (MN) whose variance is not necessarily uniformly bounded. These properties are appreciated in our complexity estimates which depend only on $L$ and \emph{local} variances or forth moments at solutions $x^*$. The robustness and variance reduction properties of our DS-SA line search scheme come at the expense of \emph{nonmartingale-like dependencies} (NMD) due to the needed inner statistical estimation of a lower bound for $L$. In order to handle a NMD and a MN, our proofs rely on a novel localization argument based on empirical process theory. Additionally, we propose a second provable convergent method for SVIs over the wider class of H\"older continuous operators without any knowledge of its endogenous parameters.
\end{abstract}

% REQUIRED
\begin{keywords}
Stochastic variational inequalities, stochastic approximation, extragradient method, variance reduction, dynamic sampling, line search, empirical process theory
\end{keywords}

% REQUIRED
\begin{AMS}
65K15, 90C33, 90C15, 62L20
\end{AMS}

\section{Introduction}

We consider methods for variational inequality problems where only a \emph{random perturbation} of the operator is available. In such problems, we have a closed convex set $X\subset\re^d$, a distribution $\probn$ over a sample space $\Xi$ and a measurable random operator $F:\Xi\times X\rightarrow\re^d$. We then define the expected operator
\begin{equation}\label{equation:expected:valued:objective}
T(x):=\probn F(\cdot,x):=\int_\Xi F(\xi,x)\dist\probn(\xi),\quad (x\in X),
\end{equation}
assuming it is well defined over $X$. It will be convenient to consider a common probability space $\Omega$ on which a probability measure $\prob$ and the correspondent expectation $\esp$ are defined. Precisely, from now on we set a random variable $\xi:\Omega\rightarrow \Xi$ with distribution $\probn$ so that $\probn(A)=\prob(\xi\in A)$ and $\probn g=\esp[g(\xi)]$ for any $A\in\Omega$ and integrable random variable $g:\Xi\rightarrow\re$.\footnote{We will sometimes use $\xi\in\Xi$ to denote a point in the sample space if no confusion arises.} Assuming \eqref{equation:expected:valued:objective}, the \emph{stochastic variational inequality} problem (SVI), denoted as VI$(T,X),$ is the problem of finding a $x^*\in X$ such that 
\begin{eqnarray}\label{problem:SVI:intro}
\langle T(x^*),x-x^*\rangle\ge0,\quad\forall x\in X.
\end{eqnarray}
The solution set of \eqref{equation:expected:valued:objective}-\eqref{problem:SVI:intro} will be denoted by $X^*$.

An important property of SVIs is that it generalizes \emph{stochastic optimization} (SP) in the sense that it includes many stochatic variational problems for which $T$ is \emph{not} integrable. Indeed, if $T=\nabla f$ for some smooth function $f:X\rightarrow\re$ satisfying $f=\esp[G(\xi,\cdot)]$ for some measurable function $G:\Xi\times X\rightarrow\re$, then VI$(T,X)$ is the first order necessary condition of the SP problem $\min_X f$. Additionally, both problems are equivalent if $f$ is convex. Notable examples of SVIs which are not related to SPs are the \emph{stochastic saddle-point problem} and the \emph{stochastic Nash equilibrium} problem. From another perspective, SVIs also generalize \emph{stochastic system of equations} in the sense that it includes geometric \emph{constraints} related to optimality conditions. Indeed, if $X=\re^d$ then the problem $T(x)=0$ is equivalent to VI$(T,\re^d)$. See e.g. \cite{facchinei:pang2003,jud:nem:tauvel2011}.

The challenge aspect of SVIs, when compared to deterministic variational inequalities, is that the expectation \eqref{equation:expected:valued:objective} cannot be evaluated.\footnote{Typical reasons are: a sample space with high dimension requiring Monte Carlo evaluation, no knowledge of the distribution $\probn$ or, even worse, no knowledge of a closed form for $F$.} However, a practical assumption is that the decision maker have access to samples drawn from the distribution $\probn$. Under this assumption, a popular methodology to solve \eqref{equation:expected:valued:objective}-\eqref{problem:SVI:intro} is the \emph{Stochastic Approximation} (SA) method. In this approach, the samples are accessed in an interior and online fashion: a deterministic version of an algorithm is chosen and a fresh independent identically distributed (i.i.d.) sample is used whenever the algorithm requires operator estimation at the current or previous iterates \cite{nem:jud:lan:shapiro2009}. In this setting, the mechanism to access $F$ via samples of $\probn$ is usually named a \emph{stochastic oracle} (SO). Precisely, given an input $x\in X$ and an i.i.d. sample $\{\xi_j\}$ drawn from $\probn$ (also independent of $x$), the SO outputs an unbiased sequence $\{F(\xi_j,x)\}$, that is, satisfying $\esp[F(\xi_j,x)]=T(x)$ for all $j$. A different methodology is the \emph{Sample Average Approximation} (SAA) method where an external and offline sample is acquired to approximate the SVI \cite{shapiro:dent:rus2009}. The approximated problem is then solved by a deterministic algorithm of preferred choice.

The SA methodology was first proposed by Robbins and Monro in the seminal paper \cite{robbins:monro1951} for the problem $\min_{x\in\re^d}\{f(x):=\esp[G(\xi,x)]\}$ for a random smooth convex function $G:\Xi\times\re^d\rightarrow\re$, that is, \eqref{equation:expected:valued:objective}-\eqref{problem:SVI:intro} with $F(\xi,\cdot):=\nabla G(\xi,\cdot)$. Their method takes the form 
\begin{equation}\label{equation:stochastic:gradient}
x^{k+1}:=x^k-\alpha_k\nabla G(\xi^k,x^k),
\end{equation}
given an i.i.d. sample sequence $\{\xi^k\}$ and positive stepsize sequence $\{\alpha_k\}$. This was the first instance of the now popular \emph{stochastic gradient method}. This methodology was then extensively explored in numerous works spanning the communities of statistics and stochastic approximation, stochastic optimization and machine learning (see e.g. \cite{bach:moulines2011, bottou:curtis:nocedal2016} and references therein). See also \cite{kushner:yin2003} for other problems where the SA procedure is relevant (such as online optimization, repeated games, queueing theory, signal processing, and control theory). More recently, the SA methodology was also analyzed for SVIs e.g. in \cite{jiang:xu2008,jud:nem:tauvel2011,koshal:nedic:shanbhag2013, yousefian:nedic:shanbhag2014,kannan:shanbhag2014,chen:lan:ouyang2017,
wang:bertsekas2015,iusem:jofre:thompson2015,iusem:jofre:oliveira:thompson2017,
balamurugan:bach2016,bianchi2016}. We refer also to \cite{hiriart-urruty1976}.

The estimation in SA methods is measured by the \emph{oracle error}. This is the map $\epsilon:\Xi\times X\rightarrow\re^d$ defined by
\begin{equation}
\epsilon(\xi,x):=F(\xi,x)-T(x),\quad\quad(\xi\in\Xi,x\in X).\label{equation:oracle:error}
\end{equation}
For $p\ge2$, the \emph{oracle error's $p$-moment} function is defined by
\begin{equation}
\sigma_p(x):=\sqrt[p]{\esp\left[\Vert\epsilon(\xi,x)\Vert^p\right]}\quad\quad(x\in X).\label{equation:oracle:error:variance}
\end{equation}
In the deterministic case, assumptions on the operator $T$ provide local surrogate models to establish the convergence of methods which solve VI$(T,X)$. In order to define and analyze SA methods, assumptions on the variance $\sigma(\cdot)^2:=\sigma_2(\cdot)^2$ (or even higher order moments) are as important as assumptions on $T$. This is because local surrogate models also need the estimation of $T$ from the SO. In that respect, we will consider Lemma \ref{lemma:holder:continuity:mean:std:dev} which is a consequence of the following assumption.
\begin{assumption}[Heavy-tailed H\"older continuous operators]\label{assumption:holder:continuity}
Consider definition \eqref{equation:expected:valued:objective}. There exist $\delta\in(0,1]$ and nonnegative random variable $\mathsf{L}:\Xi\rightarrow\re_+$ such that, for almost every $\xi\in\Xi$, $\mathsf{L}(\xi)\ge1$ and, for all $x,y\in X$,
$$
\Vert F(\xi,x)-F(\xi,y)\Vert\le\mathsf{L}(\xi)\Vert x-y\Vert^\delta.
$$
Define $\mathsf{a}:=1$ if $X$ is compact and $\mathsf{a}:=2$ for a general $X$. We assume there exist $x_*\in X$ and $p\ge2$ such that $\probn\left[\Vert F(\cdot,x_*)\Vert^{\mathsf{a}p}\right]<\infty$ and $\probn\left[\mathsf{L}(\cdot)^{\mathsf{a}p}\right]<\infty$. We define $L:=\probn\mathsf{L}(\cdot)$ and $L_q:=\sqrt[q]{\probn[\mathsf{L}(\cdot)^{q}]}+L$ for any $q>0$.
\end{assumption}
\begin{lemma}[H\"older continuity of the mean and the standard deviation]\label{lemma:holder:continuity:mean:std:dev}
Consider definitions \eqref{equation:oracle:error}-\eqref{equation:oracle:error:variance}, suppose Assumption \ref{assumption:holder:continuity} holds and take $q\in[p,2p]$ such that the integrability conditions of Assumption \ref{assumption:holder:continuity} are satisfied. Then $T$ is $(L,\delta)$-H\"older continuous on $X$ and $\sigma_q(\cdot)$ is $(L_q,\delta)$-H\"older continuous\footnote{We say $T$ is $(L,\delta)$-H\"older continuous if $\Vert T(x)-T(y)\Vert\le L\Vert x-y\Vert^\delta$ for all $x,y\in X$.} on $X$ with respect to the norm $\Vert\cdot\Vert$.
\end{lemma}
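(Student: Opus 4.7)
The plan is to handle the two Hölder bounds separately, each as a direct consequence of Assumption~\ref{assumption:holder:continuity} and the triangle inequality (in $\re^d$ for the mean, in $L^q(\Omega)$ for the standard deviation).

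For the $(L,\delta)$-Hölder continuity of $T$, I would fix $x,y\in X$ and apply Jensen's inequality together with the pointwise Hölder bound $\Vert F(\xi,x)-F(\xi,y)\Vert\le\mathsf{L}(\xi)\Vert x-y\Vert^\delta$. This gives
\[
\Vert T(x)-T(y)\Vert=\left\Vert\probn\bigl[F(\cdot,x)-F(\cdot,y)\bigr]\right\Vert\le\probn\bigl[\mathsf{L}(\cdot)\bigr]\Vert x-y\Vert^\delta=L\Vert x-y\Vert^\delta,
\]
where the integrability of $F(\cdot,x)$ for each $x\in X$ follows from the pointwise Hölder inequality applied at the anchor point $x_*$ together with the assumed integrability of $\Vert F(\cdot,x_*)\Vert$ and $\mathsf{L}(\cdot)$.

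For $\sigma_q$, the key observation is that $\sigma_q(x)=\Vert\epsilon(\cdot,x)\Vert_{L^q(\probn,\re^d)}$, so the reverse triangle inequality in $L^q$ gives $|\sigma_q(x)-\sigma_q(y)|\le\Vert\epsilon(\cdot,x)-\epsilon(\cdot,y)\Vert_{L^q}$. Then, using $\epsilon(\xi,x)-\epsilon(\xi,y)=[F(\xi,x)-F(\xi,y)]-[T(x)-T(y)]$ together with the pointwise Hölder bound and the bound on $T$ proved above, I get
\[
\Vert\epsilon(\xi,x)-\epsilon(\xi,y)\Vert\le\mathsf{L}(\xi)\Vert x-y\Vert^\delta+L\Vert x-y\Vert^\delta.
\]
Taking the $L^q$-norm of both sides and using Minkowski,
\[
|\sigma_q(x)-\sigma_q(y)|\le\Bigl(\sqrt[q]{\probn[\mathsf{L}(\cdot)^q]}+L\Bigr)\Vert x-y\Vert^\delta=L_q\Vert x-y\Vert^\delta,
\]
which is exactly the $(L_q,\delta)$-Hölder estimate. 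The condition $q\in[p,2p]$ (together with the choice of $\mathsf{a}$) is used only to ensure that $\epsilon(\cdot,x)\in L^q$ for every $x\in X$, again via the Hölder bound at $x_*$ and the integrability hypotheses in Assumption~\ref{assumption:holder:continuity}.

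I do not expect a serious obstacle here: the proof is essentially Jensen/Minkowski applied to the pointwise Hölder bound. The only minor subtlety is bookkeeping the integrability for a general (possibly unbounded) $X$, which is why the moment index $\mathsf{a}p$ appears in the assumption; verifying that $q\le\mathsf{a}p$ suffices to make $\Vert F(\cdot,x)\Vert\in L^q$ for all $x\in X$ is a short application of the triangle inequality around $x_*$.
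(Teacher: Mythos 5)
Your proof is correct and follows essentially the same route as the paper's: Jensen's inequality plus the pointwise H\"older bound for $T$, and Minkowski's inequality in $\mathcal{L}^q$ combined with the bound on $\epsilon(\xi,x)-\epsilon(\xi,y)$ for $\sigma_q$ (the paper phrases this as $\sigma_q(x)\le\sigma_q(x_*)+L_q\Vert x-x_*\Vert^\delta$ and symmetrizes, which is equivalent to your reverse-triangle-inequality formulation). No gaps.
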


See the Appendix for a simple proof. In this work we shall only consider the Euclidean norm $\Vert\cdot\Vert$. Assumption \ref{assumption:holder:continuity} is standard in stochastic optimization \cite{shapiro:dent:rus2009}. It is much less standard in the literature of SA methods where, typically, it is assumed an \emph{uniform bound on the oracle's variance}, i.e., the existence of some $\sigma>0$, such that 
$
\sup_{x\in X}\sigma(x)^2\le\sigma^2.
$
Unless the stochastic error in \eqref{equation:oracle:error} is \emph{independent} of\footnote{This is the case of the additive noise model which is a reasonable assumption in many problem instances.} $x\in X$, such \emph{global} uniform bound implicitly assumes $X$ is compact. Moreover, even if such bound holds, it does not provide sharp complexity estimates since typically $\sigma(x^*)^2\ll\sigma^2$ for $x^*\in X^*$ (see Example 3.9 in \cite{iusem:jofre:oliveira:thompson2017}). Assumption \ref{assumption:holder:continuity} does not require compactness of $X$ (including unconstrained quadratic SPs and affine SVIs with a \emph{random} matrix). Moreover, we will show that our convergence bounds depend only on the \emph{local} variances $\sigma(x^*)^2$, or the correspondent forth moments, at solutions $x^*\in X^*$ (see Theorem \ref{thm:rate:convergence} and Section \ref{section:conclusion}). See e.g. \cite{bach2014} where adaptive methods are proposed to exploit \emph{local} strong convexity modulus in stochastic optimization.

From a practical point of view, our statistical analysis will be built upon the standard assumption of an \emph{unbiased oracle with i.i.d. sampling} (UO). In the rest of the paper, it will be convenient to define the following quantities associated to an i.i.d. sample $\xi^N:=\{\xi_j\}_{j=1}^N$ drawn from $\probn$. Recall definitions \eqref{equation:expected:valued:objective} and \eqref{equation:oracle:error}. We define the \emph{empirical mean operator} and the \emph{oracle's empirical mean error} associated to $\xi^N$, respectively, by
\begin{eqnarray}
\widehat F(\xi^N,x):=\frac{1}{N}\sum_{j=1}^NF(\xi_j,x),\quad\quad\widehat\epsilon(\xi^N,x):=\frac{1}{N}\sum_{j=1}^N\epsilon(\xi_j,x),\quad\quad (x\in X).\label{equation:empirical:mean:operator:&:error}
\end{eqnarray}

\subsection{Related work, proposed methods and contributions}\label{section:related:proposed:work}
The performance of first-order methods for optimization and variational inequalities strongly depend on the \emph{stepsize sequence}. As an example, given a smooth convex function $f:\re^d\rightarrow\re$, a classical method to solve $\min_{\re^d}f$ is the gradient method $x^{k+1}:=x^k-\alpha_k\nabla f(x^k)$, where $\{\alpha_k\}$ is a positive stepsize sequence. One choice of stepsizes that guarantees its convergence is the \emph{small stepsize policy} (SSP): any stepsize sequence satisfying $\sum_k\alpha_k=\infty$ and $\sum_k\alpha_k^2<\infty$, a typical choice being $\alpha_k=\mathcal{O}(k^{-1})$. If $L$ is the Lipschitz constant of $\nabla f(\cdot)$, the \emph{constant stepsize policy} (CSP) $\alpha_k=\mathcal{O}(\frac{1}{L})$ has a provable accelerated convergence rate in comparison to the SSP since its stepsize sequence do not vanishes. However, the later has the advantage of \emph{not requiring an estimate of $L$} and, in this sense, it is a more \emph{robust} and practical policy since $L$ is rarely known. A significant improvement is the use of \emph{line search schemes} which build endogenous \emph{adaptive} stepsizes bounded away from zero at the expence of a few more gradient evaluations. As an example, given iterate $x^k$, Armijo's line search \cite{armijo1966} defines the stepsize $\alpha_k$ as the maximum $\alpha\in\{\theta^\ell\hat\alpha:\ell\in\{0\}\cup\mathbb{N}\}$ such that
\begin{equation}\label{equation:armijo:rule}
f(x^k(\alpha))-f(x^k)\le\lambda\langle\nabla f(x^k),x^k(\alpha)-x^k\rangle,
\end{equation}
where $\hat\alpha\in(0,1]$, $\theta,\lambda\in(0,1)$ are exougenous parameters and, for all $\alpha>0$, $x^k(\alpha):=x^k-\alpha\nabla f(x^k)$. The next iterate is then defined by $x^{k+1}:=x^k(\alpha_k)$. This policy enjoys the accelerated convergence of the CSP and the robustness of the SSP, i.e., it does not require knowledge of $L$. Many variants of line search schemes were developed and extended to include other variational problems. For variational inequalities, two notable ones are the line search schemes of Khobotov \cite{khobotov1987} and of Iusem-Svaiter \cite{iusem:svaiter1997}.

As expected, the stepsize policy is also determinant in the performance of SA methods. In the seminal work \cite{robbins:monro1951} and in later developments, it is shown that the SSP is sufficient for the convergence of the SG method \eqref{equation:stochastic:gradient}. The nontrivial aspect here is that the SSP has to deal not only with the convergence of the sequence but also to progressively \emph{reduce the variance} of the oracle's error trajectory $\{\nabla G(\xi^k,x^k)-\nabla f(x^k)\}_{k\in\mathbb{N}}$. Such additional challenge in SA methods is still an active research subject and it has received a burst of interest in the last decade motivated by large scale statistical machine learning applications.\footnote{In this challenge setting, theoretical and practical experience has shown that first order methods are competitive and, sometimes, the best known methods.}

The performance of a SA method can be measured by its \emph{iteration} and \emph{oracle complexities} given a tolerance $\epsilon>0$ with respect to a suitable metric. The first is the total number of iterations, a measure for the optimization error, while the second is the total number of samples and oracle calls, a measure for the estimation error. As an example, statistical lower bounds \cite{agarwal:barlett:ravikumar:wainwright2012} show that the class of smooth convex functions has an optimal oracle complexity of $\mathcal{O}(\epsilon^{-2})$ in terms of the optimality gap. A fundamental improvement with respect to estimation error was Polyak-Ruppert's \emph{iterate averaging} scheme \cite{polyak1991,polyak:juditsky1992,ruppert1988,nem:rudin1978}. This scheme replaces the SSP by \emph{longer stepsizes} $\alpha_k=\mathcal{O}(k^{-\frac{1}{2}})$ with a subsequent \emph{final average} of the iterates using the stepsizes as weights (this is sometimes called \emph{ergodic average}). \emph{If one oracle call per iteration is postulated}, such scheme obtains a convergence rate of $\mathcal{O}(k^{-\frac{1}{2}})$ with optimal iteration and oracle complexities of $\mathcal{O}(\epsilon^{-2})$ on the class of smooth convex functions. This is also the size of the final ergodic average, a measure of the additional \emph{averaging effort} implicitly required in iterate averaging schemes. Such methods, hence, are efficient in terms of oracle complexity. Iterate averaging was then extensively explored (see e.g.  \cite{juditsky:nazin:tsybakov:vayatis2005,juditsky:rigollet:tsybakov2008,nesterov:vial2008,nesterov2009,%
nem:jud:lan:shapiro2009,xiao2010,jud:nem:tauvel2011}). The important work \cite{nem:jud:lan:shapiro2009} studies the \emph{robustness} of iterate averaging in SA methods and shows that such schemes can outperform the SAA approach on relevant convex problems. On the strongly convex class, \cite{bach:moulines2011} gives a detailed non-asymptotic \emph{robust} analysis of Polyak-Ruppert averaging scheme. It theoretically and numerically justifies the importance of iterate averaging in handling the oracle's error variance.

Although iterate averaging methods obtain optimal oracle complexity, a remaining question is if \emph{improved iteration complexity} with (near) \emph{optimal oracle complexity} can be achieved. In this sense, a rapidly and recent line of research proposes SA methods with \emph{variance reduction}  using \emph{more than one oracle call per iteration} to alleviate the role of the stepsize in reducing variance. Two representative examples include \emph{gradient aggregation methods} and \emph{dynamic sampling methods} (see \cite{bottou:curtis:nocedal2016}, Section 5). These methods can use a \emph{constant stepsize policy} and thus obtain an accelerated rate of convergence when compared to the iterate averaging scheme. Designed for finitely supported distributions with bounded data, gradient aggregation methods reduce the variance by combining in a specific manner eventual exact computation (or storage) of gradients and eventual iterate averaging (or randomization schemes) with frequent gradient sampling. See e.g. \cite{bottou:curtis:nocedal2016} and references therein. Designed to solve problems with an arbitrary distribution and online data acquisition (as is the case in many stochastic and simulation optimization problems based on Monte Carlo methods), dynamic sampling methods reduce variance by estimating the gradient via an \emph{empirical average} associated to a sample whose size (\emph{mini-batch}) is increased at every iteration. See e.g. \cite{deng:ferris2009,byrd:chiny:nocedal:wu2012,%
friedlander:schmidt2013,shanbhag:blanchet2015,
ghadimi:lan2016,iusem:jofre:oliveira:thompson2017} and references therein. However, an essential point is if such increased effort in computation per iteration is worth. A nice fact is that current gradient aggregation and dynamic sampling methods achieve, up to constants, the order of the deterministic optimal iteration complexity with the \emph{same} (near) optimal oracle complexity and averaging effort of standard iterate averaging schemes. \emph{In this sense}, gradient aggregation and dynamic sampling methods can be a more efficient option than iterate averaging.

We now comment on the main purpose of this work. All variance reduction SA methods mentioned above still use a constant stepsize policy $\alpha_k=\mathcal{O}(\frac{1}{L})$ \emph{assuming knowledge of the Lipschitz constant}. Hence, although they improve the convergence of SA methods, iterate averaging with $\alpha_k=\mathcal{O}(k^{-\frac{1}{2}})$ is still a more robust policy when $L$ or other needed parameters are unknown or poorly estimated \cite{nem:jud:lan:shapiro2009, bach:moulines2011}. In this setting, current variance reduction methods may be impractical. An important question is: can \emph{faster rates of convergence} with (near) \emph{optimal oracle complexity} be accomplished by \emph{robust variance reduction} methods? By robust variance reduction we mean the use of adaptive schemes that avoid exogenous estimation of $L$ and produce a stepsize sequence bounded away from zero. Motivated by line search schemes in deterministic methods, our aim is to propose line search schemes for a class of dynamic sampled SA methods (DS-SA). In this work we focus on SVIs and pursue an improved complexity analysis of stochastic optimization problems in future research. To the best of our knowledge, line search schemes for SVIs are currently nonexistent. Even for stochastic optimization, considering that Robbins-Monro's seminal work was published in 1951, it seems that only very few existing works treat adaptive stepsize search schemes for SA methods with \emph{stepsizes bounded away from zero} \cite{maclaurin:duvenaud:adams2015, mahsereci:hennig2017, schaul:zhang:lecun2013, masse:ollivier2015, tan:ma:dai:qian2016, wardi1990, krejic:jerinkic2015, krejic:luzanin:nikolovski:stojkovska2015, krejic:luzanin:ovcin:stojkovska2015}. Still, some of them only suggest a scheme without a provable convergence theory \cite{maclaurin:duvenaud:adams2015,mahsereci:hennig2017}. For those which do guarantee convergence, some still require knowledge of the Lipschitz constant or use a small stepsize policy \cite{schaul:zhang:lecun2013, masse:ollivier2015,tan:ma:dai:qian2016} and, hence, are not robust variance reduction methods. Finally, the analysis in \cite{wardi1990, krejic:jerinkic2015, krejic:luzanin:nikolovski:stojkovska2015, krejic:luzanin:ovcin:stojkovska2015} are too restrictive since they require much more than the standard assumption of an UO used in stochastic approximation and do not give complexity estimates. In all mentioned works, uniformly bounded assumptions are made (either on the set or on the oracle's variance) and no convergence rates or oracle complexity are given (hence their efficiency cannot be compared to the SSP). Differently, our oracle assumptions are standard and, in this sense, our proposals are also novel for stochastic optimization problems viewed as particular cases of SVIs. Moreover, we provide rate of convergence and oracle complexity and do not assume uniform boundedness. 

Finally, before presenting our methods and results, it will be very instructive to briefly discuss why the analysis of line search schemes in SA methods are considerably \emph{different} and intrinsically \emph{more difficult} than in the deterministic case. This may explain the absence of a satisfying
convergence theory of SA methods with line search schemes which: (1) do not use knowledge of the Lipschitz constant, (2) obtain stepsizes bounded away from zero and (3) only assume an UO. Since \cite{robbins:monro1951, robbins:siegmund1971}, it is well known that the analysis of SA methods strongly relies on \emph{martigale processes}. From a generic perspective, such martingale-like property is obtained by: 
\begin{itemize}
\item[(i)] \textsf{Optimization process}: the deterministic iterative algorithm satisfies a fixed-point contraction or Lyapunov principle.\footnote{This is usually obtained by properties like convexity of the objective, smoothness of a nonconvex objective and monotonicity or nonexpansion of an operator.}
\item[(ii)] \textsf{Estimation process}: standard stochastic approximation consists in using a \emph{fresh i.i.d. sample} update at every iteration.
\item[(iii)] \textsf{Exogenous stepsize policies}: for instance, the SSP $\alpha_k=\mathcal{O}(k^{-1})$, longer stepsizes $\alpha_k=\mathcal{O}(k^{-\frac{1}{2}})$ with iterate averaging and the CSP $\alpha_k=\mathcal{O}(\frac{1}{L})$. We also include adaptive vanishing stepsizes which achieve better tunned constants but still require exogenous parameters (see e.g. \cite{yousefian:nedic:shanbhag2016}). 
\end{itemize}
As an example, consider the stochastic gradient method \eqref{equation:stochastic:gradient} with stepsizes satisfying $0<\sup_k\alpha_k<\frac{1}{2L}$. Given a solution $x^*$, it is possible to show that
$$
\Vert x^{k+1}-x^*\Vert^2\le\Vert x^{k}-x^*\Vert^2-\left(\frac{1}{2}-L\alpha_k\right)\frac{\alpha_k^2}{2}\Vert\nabla f(x^k)\Vert^2+2\alpha_k\langle\epsilon^k,x^*-x^k\rangle+\alpha_k^2\Vert\epsilon^k\Vert^2,
$$
where $\epsilon^k:=\nabla G(\xi^k,x^k)-\nabla f(x^k)$ is the oracle error at the $k$-th iterate. If $\alg_k:=\sigma(\xi^0,\ldots,\xi^{k-1})$ denotes the $\sigma$-algebra encoding the information up to the iteration $k$, then the above relation and the fact that $\{\xi^i\}_{i=0}^\infty$ is an i.i.d. sequence imply that the iterates' error sequence $\{\Vert x^k-x^*\Vert^2\}$ defines a ``perturbed'' \emph{supermartingale} sequence adapted to $\{\alg_k\}$ (see Section \ref{section:preliminaries}, Theorem \ref{thm:rob}).\footnote{Robbins and Monro \cite{robbins:monro1951} called this an ``almost'' supermartingale sequence. In the classical terminology from the deterministic optimization community, this would correspond to a stochastically adapted version of quasi-F\'ejer sequences.} This sequence is defined over the \emph{iteration time-scale} and accounts for the optimization error. On the other hand, the oracle's error sequence $\{\epsilon^k\}$ defines an \emph{exact martingale difference} adapted to $\{\alg_k\}$, i.e., $\esp[\epsilon^k|\alg_k]=0$ for all $k$. This sequence is defined over the \emph{estimation time-scale} and accounts for the gradient estimation error.

If one considers \emph{adaptive endogenous stepsizes} and use variance reduction, a natural choice would be a SA version of Armijo's rule \eqref{equation:armijo:rule}: chose $\alpha_k$ as the maximum $\alpha\in\{\theta^\ell\hat\alpha:\ell\in\{0\}\cup\mathbb{N}\}$ such that
\begin{equation}\label{equation:armijo:DS-SA}
\widehat G\left(\xi^k,x^k(\alpha)\right)-\widehat G(\xi^k,x^k)\le\lambda\left\langle\nabla \widehat G(\xi^k,x^k),x^k(\alpha)-x^k\right\rangle,
\end{equation}
where $\hat\alpha\in(0,1]$, $\theta,\lambda\in(0,1)$, $\xi^k:=\{\xi^k_j\}_{j=1}^{N_k}$ is an i.i.d. sample from $\probn$ such that $N_k\rightarrow\infty$ and, for all $\alpha>0$, $x^k(\alpha):=x^k-\alpha\nabla\widehat G(\xi^k,x^k)$. In above, $\widehat G(\xi^k,x^k)$ and $\nabla \widehat G(\xi^k,x^k)$ denote, respectively, the empirical averages of $G(\cdot,x^k)$ and $\nabla G(\cdot,x^k)$ with respect to the sample $\xi^k$. The challenging aspect of the above scheme is highlighted:
\begin{quote}
\textsf{(A):} DS-SA \emph{line search schemes intrinsically introduce nonmartingale-like dependencies even when using i.i.d. sampling.
}
\end{quote}
To see this, first note that the \emph{backtracking} scheme \eqref{equation:armijo:DS-SA} examines the variation of $\widehat G(\xi^k,\cdot)$ along a discrete path $\alpha\mapsto x^k(\alpha)$ so that the chosen stepsize $\alpha_k$ and accepted iterate $x^{k+1}:=x^{k}(\alpha_k)$ are both measurable functions of $(\xi^k,x^k)$. Second, by using the contraction principle produced by \eqref{equation:armijo:DS-SA}, we are forced to estimate the oracle error $\widehat\epsilon(\xi^k,x^{k+1})=\widehat G(\xi^k,x^{k+1})- f(x^{k+1})$ which \emph{is not a martingale difference}: it is a measurable function of the \emph{coupled} variables $\xi^k$ and $x^{k+1}$ due to backtracking. Even when $\xi^k$ is an i.i.d. sample of $\probn$, this coupling is inevitable and, hence, the desired convergence
\begin{equation}\label{equation:postulated:SLLN}
\lim_{k\rightarrow\infty}\widehat\epsilon(\xi^k,x^{k+1})=\lim_{k\rightarrow\infty}\sum_{j=1}^{N_k}\frac{G(\xi^k_j,x^{k+1})-f(x^{k+1})}{N_k}=0,
\end{equation}
either in almost sure sense or in distribution, \emph{does not} follow from the standard Strong Law of Large Numbers or the Central Limit Theorem: the above sum \emph{is not a sum of independent random variables}. The nontrivial aspect here is that a SA method with line search has \emph{two} statistical estimation processes: the gradient estimation of item (ii) above \emph{and} the Lipschitz constant estimation replacing (iii). In this sense, DS-SA methods with line search schemes are statistically different than standard SA methods. We finally remark that in all the works  \cite{wardi1990,krejic:luzanin:ovcin:stojkovska2015,krejic:jerinkic2015,
krejic:luzanin:nikolovski:stojkovska2015} the convergence \eqref{equation:postulated:SLLN} is \emph{postulated}, putting aside the challenging aspect in \textsf{(A)}. Thus, their assumptions are far beyond the usual assumption of an UO. Errors of the type $\widehat{\epsilon}(\xi^k,x^{k+1})$ will be referred as \emph{correlated errors}.

In this work, we propose \textsf{Algorithm \ref{algorithm:DSSA:extragradient}} for Lipschitz continuous operators and \textsf{Algorithm \ref{algorithm:DSSA:hyperplane}} for general H\"older continuous operators to solve SVIs via the SA methodology. These methods use dynamic sampling and line search schemes to cope with the absence of the Lipschitz constant or the parameters of H\"older continuity. Our contributions are resumed in the following.

\begin{algorithm}
\caption{DS-SA-extragradient method with a DS-SA line search scheme}\label{algorithm:DSSA:extragradient}
\begin{algorithmic}[1]
  \scriptsize
  \STATE INITIALIZATION: Choose the initial iterate $x^0\in\mathbb{R}^d$, parameters 
$\hat\alpha,\theta\in(0,1]$ and $\lambda\in\left(0,\frac{1}{\sqrt{6}}\right)$ and the sample rate $\{N_k\}$.
  \STATE ITERATIVE STEP: Given iterate $x^k$, generate sample $\xi^k:=\{\xi^k_{j}\}_{j\in [N_k]}$ from $\probn$ and compute
\begin{equation}\label{equation:empirical:average:DSSA:extragradient}
\widehat F(\xi^k,x^k):=N_k^{-1}\sum_{j=1}^{N_k}F(\xi_j^k,x^k).
\end{equation}
If $x^k=\Pi\left[x^k-\hat\alpha\widehat F(\xi^k,x^k)\right]$ stop. Otherwise,

\textsf{LINE SEARCH RULE}: define $\alpha_k$ as the 
maximum $\alpha\in\{\theta^\ell\hat\alpha:\ell\in\{0\}\cup\mathbb{N}\}$ such that 
\begin{equation}\label{algo:armijo:rule}
\alpha\left\Vert\widehat F\left(\xi^k,z^k(\alpha)\right)-\widehat F\left(\xi^k,x^k\right)\right\Vert
\le\lambda\Vert z^k(\alpha)-x^k\Vert, 
\end{equation}
where, for all $\alpha>0$, compute
$
z^k(\alpha):=\Pi\left[x^k-\alpha\widehat F(\xi^k,x^k)\right]
$
and 
$
\widehat F\left(\xi^k,z^k(\alpha)\right):=N_k^{-1}\sum_{j=1}^{N_k}F(\xi_j^k,z^k(\alpha)).
$

Generate sample $\eta^k:=\{\eta^k_{j}\}_{j\in [N_k]}$ from $\probn$ and set 
\begin{eqnarray}
z^k&=&\Pi\left[x^k-\alpha_k\widehat F(\xi^k,x^k)\right],\label{algo:extragradient:armijo1}\\
x^{k+1}&=&\Pi\left[x^k-\alpha_k\widehat F(\eta^k,z^k)\right].\label{algo:extragradient:armijo2}
\end{eqnarray}
\end{algorithmic}
\end{algorithm}

\begin{algorithm}
\caption{DS-SA-hyperplane method}\label{algorithm:DSSA:hyperplane}
\begin{algorithmic}[1]
  \scriptsize
  \STATE INITIALIZATION: Choose the initial iterate $x^0\in\mathbb{R}^d$, parameters 
$\tilde\beta\ge\hat\beta>0$, $\hat\alpha\in(0,1]$ and $\lambda,\theta\in(0,1)$, the 
step sequence $\{\beta_k\}\subset[\hat\beta,\tilde\beta]$ and the sample rate $\{N_k\}$.
  \STATE ITERATIVE STEP: Given iterate $x^k$, generate 
sample $\xi^k:=\{\xi^k_{j}\}_{j=1}^{N_{k}}$ from $\probn$ and compute
$
\widehat F(\xi^k,x^k):=N_k^{-1}\sum_{j=1}^{N_k}F(\xi_j^k,x^k).
$  
If $x^k=\Pi\left[x^k-\beta_k\widehat F(\xi^k,x^k)\right]$ stop. Otherwise,

\textsf{LINE SEARCH RULE:} define $\alpha_k$ as the maximum $\alpha\in\{\theta^\ell\hat\alpha:\ell\in\{0\}\cup\mathbb{N}\}$ such that 
\begin{equation}\label{algo:armijo:rule2}
\left\langle\widehat F\left(\xi^k,\bar z^k(\alpha)\right),x^k-\Pi(g^k)\right\rangle
\ge\frac{\lambda}{\beta_k}\Vert x^k-\Pi(g^k)\Vert^2,
\end{equation}
where $g^k:=x^k-\beta_k\widehat F(\xi^k,x^k)$ and for all $\alpha>0$, compute 
$
\overline z^k(\alpha):=\alpha\Pi(g^k)+(1-\alpha)x^k
$
and 
$
\widehat F\left(\xi^k,\overline z^k(\alpha)\right):=N_k^{-1}\sum_{j=1}^{N_k}F(\xi_j^k,\overline z^k(\alpha)).
$

Set 
\begin{eqnarray}
z^k&:=&\alpha_k\Pi\left[x^k-\beta_k\widehat F(\xi^k,x^k)\right]+(1-\alpha_k)x^k,\label{algo:hyperplane1}\\
x^{k+1}&:=&\Pi\left[x^k-\gamma_k\widehat F(\xi_k,z^k)\right],\label{algo:hyperplane2}
\end{eqnarray}
with 
$
\gamma_k:=\left\langle\widehat F(\xi^k,z^k),x^k-z^k\right\rangle\cdot\Vert\widehat F(\xi^k,z^k)\Vert^{-2}.
$
\end{algorithmic}
\end{algorithm}

(i) \emph{Robust variance reduction with efficient oracle complexity and multiplicative noise}: To the best of our knowledge, \textsf{Algorithm \ref{algorithm:DSSA:extragradient}} is the first provable \emph{robust} variance reduced SA method, either for SVIs or SPs, with improved iteration complexity and near optimal oracle complexity. This means that we obtain, up to constants, an optimal iteration complexity of $\mathcal{O}(\epsilon^{-1})$ and near optimal oracle complexity of $\mathcal{O}(\epsilon^{-2})$ (up to log factors on $\epsilon$ and $L$) in the large sample setting for SVIs with Lipschitz continuous operators without the a priori knowledge of the Lipschitz constant $L$. Previous nonrobust variance reduction methods use the policy $\alpha_k=\mathcal{O}(\frac{1}{L})$ and obtain, up to constants, the same complexities \cite{iusem:jofre:oliveira:thompson2017} but require an exogenous estimate of $L$. Such estimate is often nonexistent in practice. Moreover, even in possession of such an estimate, the convergence can be slow if it is of a poor quality. On the other hand, previous robust methods use vanishing stepsizes with the poorer iteration complexity of $\mathcal{O}(\epsilon^{-2})$ in the case of ill-conditioned problems \cite{nem:jud:lan:shapiro2009}. Concerning line search schemes, they are nonexistent for SVIs but it seems our results are also new for SPs (seen as a particular SVI): all current methods either still use the knowledge of $L$ and other parameters, use the small stepsize policy (and, hence, have a slower iteration complexity) or postulate \eqref{equation:postulated:SLLN} without giving complexity estimates \cite{wardi1990,krejic:luzanin:ovcin:stojkovska2015,krejic:jerinkic2015,
krejic:luzanin:nikolovski:stojkovska2015}. Condition \eqref{equation:postulated:SLLN} is much stronger than the standard assumption of an UO, a sufficient assumption for our analysis. Differently than previous robust methods for bounded ill-contidioned problems \cite{nem:jud:lan:shapiro2009}, we ask only Assumption \ref{assumption:holder:continuity} (an oracle with multiplicative noise). In this aggressive but practical setting, the oracle's variance is not uniformly upper bounded if $X$ is unbounded. Our bounds are local in the sense that they depend on variance at solutions, the Lipschitz constant and initial iterates (but not on the diamater of $X$ nor on a global variance upper bound). Compared to nonrobust variance reduced methods \cite{byrd:chiny:nocedal:wu2012,ghadimi:lan2016,iusem:jofre:oliveira:thompson2017}, a price to pay in our estimates for not having an exogenous estimate of $L$ is that the oracle complexity of \textsf{Algorithm \ref{algorithm:DSSA:extragradient}} has an additional factor of $\ln (L)\mathcal{O}(d)$. We note however, that such upper bound is tight in comparison to the sample complexity of the general SAA estimator, an estimator which does not assume extra information on the problem (see e.g. Theorem 5.18 in \cite{shapiro:dent:rus2009}). We refer to Theorem \ref{thm:rate:convergence}, Corollary \ref{cor:oracle:complexity} and Section \ref{section:conclusion}.\footnote{Our complexities hold for the quadratic natural residual or the D-gap function (see Section \ref{section:preliminaries}). If $X$ is compact, our method achieves the same complexities, up to constants, in terms of the dual-gap function (see e.g. \cite{nem:jud:lan:shapiro2009, chen:lan:ouyang2017}).}

(ii) \emph{Complexity estimates of SA methods via a local empirical process theory}: as mentioned before, DS-SA line schemes intrinsically introduce nonmartigale-like processes. Going beyond standard martingale techniques used in SA methods with exogenous stepsize policies, we use a novel analysis based on advanced techniques from \emph{Empirical Process Theory}  \cite{boucheron:lugosi:massart2013,panchenko2003} to analyze correlated errors introduced in stochastically approximated line search schemes. Very importantly, we \emph{do not} postulate significantly narrower oracle assumptions such as \eqref{equation:postulated:SLLN} used in \cite{wardi1990,krejic:luzanin:ovcin:stojkovska2015,krejic:jerinkic2015,
krejic:luzanin:nikolovski:stojkovska2015}. We refer the reader to Section \ref{section:empirical:process:theory:DSSA} for a detailed description. This is the most sensible part of our work and the cornerstone tool. Our analysis also sets the ground for potential generalizations to other robust algorithms based on the SA methodology\footnote{Possibly requiring nontrivial adaptations.}. In a nutshell, our proposition is to \emph{locally} decouple the dependency in the correlated error up to the control of an empirical process over a suitable ball centered at the current iterate. The intuition here is that the iterate generated after the line search scheme, although highly dependent on the fresh i.i.d. sample, lies at a ball whose radius is dependent on previous information and on a martingale difference error. We refer to Section \ref{section:empirical:process:theory:DSSA} for futher details. The statistical preliminaries are carefully presented. 

Besides items (i)-(ii) above, another contribution is the proof of convergence of \textsf{Algorithm \ref{algorithm:DSSA:hyperplane}}. Our main interest in this algorithm is that, differently than \textsf{Algorithm \ref{algorithm:DSSA:extragradient}} whose convergence holds for Lipschitz continuous operators, \textsf{Algorithm \ref{algorithm:DSSA:hyperplane}} converges for arbitrary H\"older continuous operators without any knowledge of the exponent $\delta$  and the H\"older modulus.

In Section \ref{section:preliminaries} we give some preliminaries. Section \ref{section:empirical:process:theory:DSSA} develops a general empirical process theory which is later applied in the convergence theory of \textsf{Algorithms \ref{algorithm:DSSA:extragradient}} and \textsf{\ref{algorithm:DSSA:hyperplane}}. The convergence theory of \textsf{Algorithm \ref{algorithm:DSSA:extragradient}} is presented in Section \ref{section:algorithm:extragradient:DSSA} while the convergence theory of \textsf{Algorithm \ref{algorithm:DSSA:hyperplane}} is presented in Section \ref{section:algorithm:hyperplane:DSSA}. Section \ref{section:conclusion} concludes with some discussions concerning \textsf{Algorithm \ref{algorithm:DSSA:extragradient}}. Some lemmas are proved in the Appendix.

\section{Preliminaries and notation}\label{section:preliminaries}
For $x,y\in\re^d$, we denote by $\langle x,y\rangle$ the standard inner product, and by $\Vert x\Vert=\sqrt{\langle x,x\rangle}$ the correspondent Euclidean norm. Given $C\subset\re^d$ and $x\in\re^d$, we use the notation $\dist(x,C):=\inf\{\Vert x-y\Vert:y\in C\}$ and $\diam(C):=\sup\{\Vert x-y\Vert:x,y\in C\}$. For a closed and convex set $C\subset\mathbb{R}^d$, we use the notation $\Pi_{C}(x):=\argmin_{y\in C}\Vert y-x\Vert^2$ for $x\in\re^d$. Given $H:\re^d\rightarrow\re^d$, S$(H,C)$ denotes the solution set of VI$(H,C)$. The following properties of the projection operator are well known (see e.g. \cite{facchinei:pang2003, iusem:svaiter1997} and \cite{chen:lan:ouyang2017}, Proposition 4.1). 
\begin{lemma}\label{lemma:proj}
Take a closed and convex set $C\subset\mathbb{R}^d$.
\begin{itemize}
\item[i)] Let $v\in\re^d$ and $x\in C$ with $z:=\Pi_C[x-v]$. Then, for all $u\in C$, 
$
2\langle v,z-u\rangle\le\Vert x-u\Vert^2-\Vert z-u\Vert^2-\Vert z-x\Vert^2.
$
\item[ii)] For all $x\in\mathbb{R}^d, y\in C$,
$
\Vert \Pi_{C}(x)-y\Vert^2+\Vert \Pi_{C}(x)-x\Vert^2\le\Vert x-y\Vert^2.
$	
\item[iii)]For all $x,y\in\mathbb{R}^d$,
$
\Vert \Pi_{C}(x)-\Pi_{C}(y)\Vert\le\Vert x-y\Vert.
$	
\item[iv)]Given $H:\re^d\rightarrow\re^d$, $\mbox{\emph{S}}(H,C)=\{x\in\re^d:x=\Pi_{C}[x-H(x)]\}$.
\item[v)] For all $x\in C,y\in\mathbb{R}^d$,
$
\langle x-y,x-\Pi_C(y)\rangle\ge\Vert x-\Pi_C(y)\Vert^2.
$
\end{itemize}
\end{lemma}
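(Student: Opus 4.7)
The unifying tool for every item is the variational characterization of the Euclidean projection: for $w\in\re^d$ and $z\in\re^d$, one has $z=\Pi_C(w)$ if and only if $z\in C$ and $\langle w-z,u-z\rangle\le 0$ for all $u\in C$. This follows from the first-order optimality condition applied to the strongly convex program $\min_{u\in C}\tfrac12\|u-w\|^2$. I would state this characterization once at the outset and use it repeatedly.

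For item (i), the plan is to apply the characterization with $w=x-v$ and $z=\Pi_C[x-v]$, yielding $\langle x-v-z,u-z\rangle\le0$, hence $\langle v,z-u\rangle\le\langle x-z,z-u\rangle$. Doubling and using the polarization identity $2\langle x-z,z-u\rangle=\|x-u\|^2-\|x-z\|^2-\|z-u\|^2$ (obtained by expanding $\|x-u\|^2=\|(x-z)+(z-u)\|^2$) gives the claim. Item (ii) then drops out by specializing the same argument with $v=0$ and $z=\Pi_C(x)$: the characterization yields $\langle x-z,u-z\rangle\le 0$, so $\langle x-z,z-u\rangle\ge 0$, and the polarization identity applied to $\|x-u\|^2=\|x-z\|^2+2\langle x-z,z-u\rangle+\|z-u\|^2$ yields (ii) after renaming $u=y$.

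Item (iii) is the standard nonexpansivity proof: set $z_1=\Pi_C(x)$, $z_2=\Pi_C(y)$, write the two variational inequalities $\langle x-z_1,z_2-z_1\rangle\le0$ and $\langle y-z_2,z_1-z_2\rangle\le0$, add them to obtain $\|z_1-z_2\|^2\le\langle x-y,z_1-z_2\rangle$, and finish by Cauchy--Schwarz. For item (iv), I would chain equivalences: $x^\ast\in\mathrm{S}(H,C)$ iff $x^\ast\in C$ and $\langle H(x^\ast),u-x^\ast\rangle\ge0$ for all $u\in C$, iff $\langle(x^\ast-H(x^\ast))-x^\ast,u-x^\ast\rangle\le 0$ for all $u\in C$, iff $x^\ast=\Pi_C[x^\ast-H(x^\ast)]$ by the characterization.

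For item (v), the plan is a decomposition: write $x-y=(x-\Pi_C(y))+(\Pi_C(y)-y)$ and expand
\[
\langle x-y,x-\Pi_C(y)\rangle=\|x-\Pi_C(y)\|^2+\langle\Pi_C(y)-y,x-\Pi_C(y)\rangle.
\]
Because $x\in C$, the variational characterization with $w=y$, $z=\Pi_C(y)$, $u=x$ gives $\langle y-\Pi_C(y),x-\Pi_C(y)\rangle\le 0$, i.e.\ the second summand is nonnegative, which yields (v). None of the five items presents a genuine obstacle: the entire lemma is a bookkeeping exercise organized around the variational characterization and the polarization identity, and the only care needed is to track signs consistently when switching between $z-u$ and $u-z$ in the inner products.
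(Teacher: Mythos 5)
Your proof is correct in all five items: the variational characterization $\langle w-\Pi_C(w),u-\Pi_C(w)\rangle\le 0$ for $u\in C$ together with the polarization identity does indeed yield (i), (ii), (iii) and (v), and the chain of equivalences for (iv) is exactly the standard argument. The paper itself offers no proof of this lemma --- it is stated as well known with references to Facchinei--Pang and related sources --- and your derivation is precisely the textbook one those references contain, so there is nothing to compare beyond noting that you have supplied the omitted details correctly.
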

For $X$ as in \eqref{problem:SVI:intro}, we use the notation $\Pi:=\Pi_X$. Given an operator $H:\re^d\rightarrow\re^d$, for any $x\in\re^{n}$ and $\alpha>0$, the \emph{natural residual function} associated to VI$(H,X)$ is
$$
r_\alpha(H;x):=\left\Vert x-\Pi\left[x-\alpha H(x)\right]\right\Vert,\quad\quad (x\in X).
$$
It is a equivalent metric to the D-gap function (see \cite{facchinei:pang2003}, Theorems 10.2.3 and 10.3.3 and Proposition 10.3.7). For $T$ as in \eqref{problem:SVI:intro}, we use the notation $r_\alpha:=r_\alpha(T,\cdot)$. For $\alpha=1$, we define $r(H;\cdot):=r_1(H;\cdot)$ and $r:=r_1$. We shall need the following lemma (see \cite{facchinei:pang2003}, Proposition 10.3.6).

\begin{lemma}\label{lemma:residual:decrease}
Given $x\in\re^d$, the function $(0,\infty)\ni\alpha\mapsto \frac{r_\alpha(H,x)}{\alpha}$ is non-increasing.
\end{lemma}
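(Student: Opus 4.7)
}
Fix $x\in\re^d$ and, to lighten notation, set $v:=H(x)$, $p_\alpha:=\Pi[x-\alpha v]$ and
$$
w_\alpha:=\frac{x-p_\alpha}{\alpha},\qquad\text{so that}\qquad \frac{r_\alpha(H;x)}{\alpha}=\|w_\alpha\|.
$$
The goal is to show that $\alpha\mapsto\|w_\alpha\|$ is non-increasing on $(0,\infty)$, and I will take $0<\alpha\le\beta$ and prove $\|w_\alpha\|\ge\|w_\beta\|$.

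First I would use the variational characterization of the projection onto the closed convex set $X$: $p_\alpha=\Pi[x-\alpha v]$ is equivalent to $\langle (x-\alpha v)-p_\alpha,\,y-p_\alpha\rangle\le0$ for every $y\in X$. Dividing by $\alpha>0$, this rewrites as $\langle w_\alpha-v,\,y-p_\alpha\rangle\le0$ for all $y\in X$. Applying this with $y=p_\beta\in X$, and the analogous inequality for $p_\beta$ with $y=p_\alpha\in X$, and then adding the two, the terms involving $v$ cancel and I obtain the ``monotonicity-type'' inequality
$$
\langle w_\alpha-w_\beta,\,p_\beta-p_\alpha\rangle\le0.
$$

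The next step is to rewrite $p_\beta-p_\alpha$ in terms of $w_\alpha,w_\beta$ using $p_\alpha=x-\alpha w_\alpha$ and $p_\beta=x-\beta w_\beta$, giving $p_\beta-p_\alpha=\alpha w_\alpha-\beta w_\beta$. Substituting and expanding the inner product yields, after rearrangement,
$$
\alpha\|w_\alpha\|^2+\beta\|w_\beta\|^2\le(\alpha+\beta)\,\langle w_\alpha,w_\beta\rangle.
$$
Applying the Cauchy--Schwarz inequality to the right-hand side and setting $a:=\|w_\alpha\|$, $b:=\|w_\beta\|$ yields $\alpha a^2+\beta b^2\le(\alpha+\beta)ab$, which factors as
$$
(a-b)(\alpha a-\beta b)\le0.
$$

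Finally, I would read off the conclusion from this factored inequality by a short case analysis. If $a\ge b$ the desired bound $\|w_\alpha\|\ge\|w_\beta\|$ holds. If instead $a<b$, then $(a-b)<0$ forces $\alpha a\ge\beta b$; since $\alpha\le\beta$ this gives $\alpha a\ge\beta b\ge\alpha b$, hence $a\ge b$, a contradiction unless $a=b=0$, in which case the claim is trivial. This finishes the proof. I do not anticipate a serious obstacle: the only subtle point is the case analysis at the end, which is easily handled, and the proof is a clean manipulation of the two first-order projection inequalities.
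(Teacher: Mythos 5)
Your argument is correct and complete: the two variational characterizations of the projections, added together, give exactly the monotonicity inequality you state, and the algebraic factorization $(a-b)(\alpha a-\beta b)\le 0$ together with $\alpha\le\beta$ does force $a\ge b$. Note that the paper does not prove this lemma at all — it is quoted from \cite{facchinei:pang2003} (Proposition 10.3.6) — and your proof is essentially the standard textbook argument for that result. One cosmetic point: in the final case analysis the clause ``a contradiction unless $a=b=0$'' is superfluous, since assuming $a<b$ you derive $a\ge b$, which is already an outright contradiction; the exceptional case never arises.
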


Given sequences $\{x^k\}$ and $\{y^k\}$, we use the notation $x^k=\mathcal{O}(y^k)$ or $\Vert x^k\Vert\lesssim\Vert y^k\Vert$ 
to mean that there exists a constant $C>0$ such that $\Vert x^k\Vert\le C\Vert y^k\Vert$ for all $k$. The notation $\Vert x^k\Vert\sim\Vert y^k\Vert$ means that $\Vert x^k\Vert\lesssim\Vert y^k\Vert$ and $\Vert y^k\Vert\lesssim\Vert x^k\Vert$. Given a $\sigma$-algebra $\alg$ and a  random variable $\xi$, we denote by $\esp[\xi]$, $\esp[\xi|\alg]$, and $\var[\xi]$, the expectation, conditional expectation and 
variance, respectively. Given $p\ge1$, $\Lpnorm{\xi}$ is the $\mathcal{L}^p$-norm of $\xi$ and 
$
\Lpnorm{\xi\,|\alg}:=\sqrt[p]{\esp\left[|\xi|^p\,|\alg\right]}
$
is the $\mathcal{L}^p$-norm of $\xi$ conditional to $\alg$. We denote by $\sigma(\xi_1,\ldots,\xi_k)$ the $\sigma$-algebra generated by the random variables $\{\xi_i\}_{i=1}^k$ and $\esp[\cdot|\xi_1,\ldots,\xi_k]:=\esp[\cdot|\sigma(\xi_1,\ldots,\xi_k)]$. We write $\xi\in\alg$ for ``$\xi$ is $\alg$-measurable'', $\xi\perp\perp\alg$ for ``$\xi$ is independent of $\alg$'' and $\unit_A$ for the characteristic function of a set $A\in\alg$. Given  $x,y\in\re$, $\lceil x\rceil$ denotes the smallest integer greater than $x$, $x\vee y:=\max\{x,y\}$ and $x\wedge y:=\min\{x,y\}$. $\mathbb{N}_0:=\mathbb{N}\cup\{0\}$ and, for $m\in\mathbb{N}$, we use the notation $[m]=\{1,\ldots,m\}$. $|\mathcal{V}|$ denotes the cardinality of a set $\mathcal{V}$, $\mathbb{B}$ denotes the Euclidean unit ball and $\mathbb{B}[x,r]$ denotes the Euclidean ball with center $x$ and radius $r>0$.

As in other stochastic approximation methods, a fundamental tool to be used is the following Convergence Theorem of Robbins and Siegmund \cite{robbins:siegmund1971} for perturbed nonnegative supermartingales.
\begin{theorem}\label{thm:rob}
Let $\{y_k\},\{u_k\}, \{a_k\}, \{b_k\}$ be sequences of non-negative random variables, adapted to the filtration $\{\alg_k\}$, such that almost surely (a.s.) $\sum a_k<\infty$, $\sum b_k<\infty$ and for all $k\in\mathbb{N}$,
$
\esp\big[y_{k+1}\big| \alg_k\big]\le(1+a_k)y_k-u_k+b_k.
$
Then a.s. $\{y_k\}$ converges and $\sum u_k<\infty$.
\end{theorem}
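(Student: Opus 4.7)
The plan is to use an integrating-factor transformation to reduce the claim to the classical Doob supermartingale convergence theorem. First, since $\sum_k a_k<\infty$ a.s.\ and $a_k\ge 0$, the infinite product $\prod_{i\ge 0}(1+a_i)$ converges almost surely to a finite positive random variable; equivalently, the predictable sequence $U_k:=\prod_{i=0}^{k}(1+a_i)^{-1}$ (which is $\alg_k$-measurable by adaptedness of $\{a_k\}$) is monotone and satisfies $U_k\downarrow U_\infty>0$ a.s. The identity $U_k(1+a_k)=U_{k-1}$ is what drives everything: multiplying the hypothesis by $U_k$ and taking $U_{k-1}y_k, U_k u_k, U_k b_k$ inside the conditional expectation (legitimate by measurability) yields
$$\esp\!\left[U_k y_{k+1}\,\big|\,\alg_k\right]\;\le\; U_{k-1}y_k \;-\;U_k u_k\;+\;U_k b_k.$$
So the weighted process $U_{k-1}y_k$ already behaves like a supermartingale, perturbed by a term whose partial sums are summable.

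Next I would build an honest supermartingale by telescoping away the perturbations. Define
$$W_k\;:=\;U_{k-1}y_k\;+\;\sum_{j=0}^{k-1}U_j u_j\;-\;\sum_{j=0}^{k-1}U_j b_j.$$
A direct computation from the previous display, adding and subtracting the partial sums up to $k$, gives $\esp[W_{k+1}\mid\alg_k]\le W_k$, so $\{W_k\}$ is an $\{\alg_k\}$-supermartingale. Using $U_j\le 1$ one gets $W_k\ge -\sum_j b_j$, which is a.s.\ finite by hypothesis, so $\{W_k\}$ is bounded below by an a.s.\ finite random variable. Doob's a.s.\ convergence theorem for supermartingales bounded below (in its $\mathcal{L}^1$-bounded-from-below form, after the localization discussed below) then yields that $W_k$ converges a.s.\ to a finite limit.

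From here I would read off both conclusions. The subtracted sum $\sum_{j=0}^{k-1}U_j b_j$ is a.s.\ convergent (nondecreasing and dominated by $\sum_j b_j<\infty$), so the sum $U_{k-1}y_k+\sum_{j=0}^{k-1}U_j u_j$ converges a.s.\ to a finite limit. Both summands are nonnegative and the second is nondecreasing in $k$, hence it must converge too, giving $\sum_j U_j u_j<\infty$ a.s. Since $U_j\to U_\infty>0$ a.s., this forces $\sum_j u_j<\infty$ a.s. Then $U_{k-1}y_k$ also converges a.s., and dividing by the a.s.\ positive limit $U_\infty$ gives a.s.\ convergence of $y_k$.

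The main obstacle I anticipate is integrability: a priori the $y_k$ are only assumed nonnegative and $\alg_k$-adapted, while Doob's theorem needs the supermartingale to lie in $\mathcal{L}^1$. I would address this by the standard localization trick, stopping at $\tau_N:=\inf\{k: y_k>N\}$ and applying the above argument to the stopped process $W_{k\wedge\tau_N}$, which is bounded. This gives the conclusions on $\{\tau_N=\infty\}$, and then letting $N\to\infty$ on the event $\{\sup_k y_k<\infty\}$ finishes the proof; the remaining case $\{\sup_k y_k=\infty\}$ has probability zero because the a.s.\ convergence of $W_{k\wedge\tau_N}$ implies $\prob(\tau_N<\infty)\to 0$. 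Once this technicality is dealt with, every step above is essentially mechanical.
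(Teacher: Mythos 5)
A preliminary remark: the paper offers no proof of this statement --- it is quoted verbatim from Robbins and Siegmund \cite{robbins:siegmund1971} --- so your proposal can only be measured against the classical argument, which is indeed the integrating-factor construction you use. The deterministic core of your proof is correct: with $U_k:=\prod_{i\le k}(1+a_i)^{-1}$ the process $W_k$ is a supermartingale in the generalized sense, and once one knows that $W_k$ converges a.s.\ to a finite limit, both conclusions follow exactly as you extract them, using $U_k\downarrow U_\infty>0$.

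The genuine gap is in the localization, which you rightly identify as the crux but then execute incorrectly. The stopped process $W_{k\wedge\tau_N}$ with $\tau_N:=\inf\{k:y_k>N\}$ is \emph{not} bounded: at the stopping time $y_{\tau_N}$ may exceed $N$ by an arbitrary amount, the increasing term $\sum_{j<k}U_ju_j$ is not controlled by $N$ at all, and the lower bound $-\sum_jb_j$ is only a.s.\ finite, not integrable and not $\alg_0$-measurable, so neither boundedness nor $\sup_k\esp\left[W_{k\wedge\tau_N}^-\right]<\infty$ is available and Doob's theorem cannot be invoked in the form you cite. Moreover, the closing step is circular: a.s.\ convergence of $W_{k\wedge\tau_N}$ for each $N$ says nothing about $\prob(\tau_N<\infty)$, since on $\{\tau_N<\infty\}$ the stopped process converges trivially; concluding that $\{\sup_ky_k=\infty\}$ is null this way presupposes what you are proving. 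The standard repair localizes on the \emph{perturbation}, not on $y_k$: set $\sigma_M:=\inf\{k:\sum_{j=0}^kb_j>M\}$ (a stopping time because $b_j\in\alg_j$), so that $W_{k\wedge\sigma_M}\ge-M$ is bounded below by a constant; integrability is then secured by multiplying $W_{k\wedge\sigma_M}+M$ by $\unit_{\{y_0\le N\}}\in\alg_0$, which produces a genuine nonnegative $\mathcal{L}^1$-supermartingale, hence a.s.\ convergence. Since $\sum_kb_k<\infty$ a.s., the union over $M$ of $\{\sigma_M=\infty\}$ has full probability, so the unstopped $W_k$ converges a.s.; the remainder of your argument then goes through verbatim, and boundedness of $y_k$ is a consequence of its convergence, so no separate treatment of $\{\sup_ky_k=\infty\}$ is needed.
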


\section{An empirical process theory for DS-SA line search schemes}\label{section:empirical:process:theory:DSSA}

As mentioned in Section \ref{section:related:proposed:work}, if $L$ in Assumption \ref{assumption:holder:continuity} is known then the analysis of SA methods with the CSP can exploit the fact that the oracle error's define a \emph{martingale difference}. This type of errors can be controlled in a relatively straightforward way (see Lemma \ref{lemma:decay:empirical:error} in Section \ref{section:proof:theorem}). The main objective of this section is to prove the following theorem. This is will the most sensitive part of our analysis and it is the cornerstone tool to handle \emph{nonmartingale-like} oracle errors obtained when stepsize DS-SA line search schemes are used to estimate an unknown $L$ (see \textsf{(A)} in Section \ref{section:related:proposed:work} and comments following it). 

\begin{theorem}[Local bound for the $\mathcal{L}^p$-norm of the correlated error in DS-SA line search schemes]\label{thm:variance:error:with:line:search}
Consider the \emph{SVI} given by \eqref{equation:expected:valued:objective}-\eqref{problem:SVI:intro} with solution set $X^*$. Let $\xi^N:=\{\xi_j\}_{j=1}^N$ be an i.i.d sample drawn from $\probn$ and let $\alpha_N:\Xi\rightarrow[0,\hat\alpha]$ be a random variable for some $0<\hat\alpha\le1$. Suppose that Assumption \ref{assumption:holder:continuity} holds, recall definitions \eqref{equation:oracle:error}-\eqref{equation:empirical:mean:operator:&:error} and define $\delta_1:=0$ if $\delta=1$ and $\delta_1:=1$ if $\delta\in(0,1)$.  

Given $(\alpha,x)\in[0,\hat\alpha]\times X$, we define
$$
z\left(\xi^N;\alpha,x\right):=\Pi\left[x-\alpha\widehat F\left(\xi^N,x\right)\right],
$$
and $\overline z_\beta(\xi^N;\alpha,x):=\alpha z(\xi^N;\beta,x)+(1-\alpha)x$, given $\beta>0$. Then the following holds:
\begin{itemize}
\item[(i)] There exist positive constants $\{\mathsf{c}_i\}_{i=1}^4$ (depending on $d$, $\delta$, $p$ and $L_{2p}\hat\alpha$) such that, for any $x\in X$ and $x^*\in X^*$,
\begin{eqnarray*}
\Lpnorm{\left\Vert\widehat\epsilon\left(\xi^N, z(\xi^N;\alpha_N,x)\right)\right\Vert}&\le &\frac{\mathsf{c}_1\sigma_{2p}(x^*)+\overline{L}_{2p}\left[\delta_1\vee\Vert x-x^*\Vert^\delta\right]}{\sqrt{N}},
\end{eqnarray*}
where $\overline{L}_{2p}:=\mathsf{c}_2L_2+\mathsf{c}_3L_p+\mathsf{c}_4L_{2p}$.
\item[(ii)] If $X$ is compact, there exist positive constants $\mathsf{d}_2$ and $C_p$ (depending on $d$, $\delta$ and $p$) such that, for any $x\in X$ and $x^*\in X^*$,
\begin{eqnarray*}
\Lpnorm{\left\Vert\widehat\epsilon\left(\xi^N, z(\xi^N;\alpha_N,x)\right)\right\Vert}&\le &\frac{C_p\sigma_{p}(x^*)+L_{p}^*\diam(X)^\delta}{\sqrt{N}},
\end{eqnarray*}
where ${L}_{p}^*:=\mathsf{d}_2L_2+pL_p$.
\end{itemize} 
Up to universal constants, the same bounds above holds for 
$\Lpnorm{\left\Vert\widehat\epsilon\left(\xi^N, \overline z_\beta(\xi^N;\alpha_N,x)\right)\right\Vert}$.
\end{theorem}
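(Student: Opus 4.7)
The plan is to separate the correlated error into a martingale-difference piece and a \emph{localized empirical-process} remainder whose centre and radius both admit clean $\mathcal{L}^{q}$ controls, and to handle each piece by vectorial Marcinkiewicz--Zygmund and a chaining (Dudley) bound for H\"older classes, respectively. The linchpin is to localize around $x^{*}$ rather than around $x$: using the variational-inequality identity $x^{*}=\Pi[x^{*}-\alpha_{N}T(x^{*})]$ (Lemma~\ref{lemma:proj}(iv)) together with the non-expansiveness of $\Pi$,
\[
\|z-x^{*}\|\le\|(x-x^{*})-\alpha_{N}(\widehat F(\xi^{N},x)-T(x^{*}))\|\le \|x-x^{*}\|+\hat\alpha L\|x-x^{*}\|^{\delta}+\hat\alpha\,\|\widehat\epsilon(\xi^{N},x)\|=:R_{N}.
\]
The crucial feature of $R_{N}$ is that the free constant $\|T(x^{*})\|$ cancels in the VI step, leaving only a martingale-difference residual $\hat\alpha\|\widehat\epsilon(\xi^{N},x)\|$ and deterministic multiples of $\|x-x^{*}\|^{\delta}$.

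Next I would decompose
\[
\widehat\epsilon(\xi^{N},z)=\widehat\epsilon(\xi^{N},x^{*})+\bigl[\widehat\epsilon(\xi^{N},z)-\widehat\epsilon(\xi^{N},x^{*})\bigr].
\]
The first summand is an i.i.d.\ mean-zero sum with $\mathcal{L}^{p}$ moments bounded by $\sigma_{p}(x^{*})$, so a vectorial Marcinkiewicz--Zygmund inequality gives $\Lpnorm{\|\widehat\epsilon(\xi^{N},x^{*})\|}\lesssim \sqrt{p}\,\sigma_{p}(x^{*})/\sqrt{N}\le\sigma_{2p}(x^{*})/\sqrt{N}$ (by monotonicity of $\sigma_{q}$ in $q$). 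The second summand is dominated by the localized empirical process
\[
\|\widehat\epsilon(\xi^{N},z)-\widehat\epsilon(\xi^{N},x^{*})\|\le\sup_{y\in X\cap\mathbb{B}[x^{*},R_{N}]}\|\widehat\epsilon(\xi^{N},y)-\widehat\epsilon(\xi^{N},x^{*})\|,
\]
which transfers the nonmartingale-like coupling between $z$ and $\xi^{N}$ into a supremum over a random ball whose radius is itself controlled by martingale-difference quantities.

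The heart of the argument is a chaining bound for this supremum. Since $F(\xi,\cdot)$ is $(\mathsf{L}(\xi),\delta)$-H\"older and Euclidean $\epsilon$-covering numbers of $\mathbb{B}[x^{*},r]\cap X\subset\re^{d}$ are $\mathcal{O}((r/\epsilon)^{d})$, a vectorial Dudley entropy integral (or a symmetrisation plus Rademacher-complexity bound) yields, for every \emph{deterministic} $r>0$ and any $q\in[p,2p]$ compatible with Assumption~\ref{assumption:holder:continuity},
\[
\Lqnorm{\sup_{y\in X\cap\mathbb{B}[x^{*},r]}\|\widehat\epsilon(\xi^{N},y)-\widehat\epsilon(\xi^{N},x^{*})\|}\lesssim C(d,\delta,q)\,L_{q}\,\frac{r^{\delta}}{\sqrt{N}},
\]
uniformly in $x^{*}$, where $C(d,\delta,q)$ packages the polynomial-in-$d$ contribution of the entropy integral for a $d$-dimensional H\"older class. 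This is also the step that produces the $d$-factor in the subsequent oracle complexity of \textsf{Algorithm~\ref{algorithm:DSSA:extragradient}}.

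It remains to substitute $R_{N}$ for $r$ in the chaining bound and collect terms. I would do this via Cauchy--Schwarz at exponent $p$: the $\Lpnorm{\cdot}$ of the localized supremum splits into the $\Ldpnorm{\cdot}$ of the chaining bound (yielding the factor $L_{2p}/\sqrt{N}$) times $\Ldpnorm{R_{N}^{\delta}}$, the latter bounded via Marcinkiewicz--Zygmund on $\|\widehat\epsilon(\xi^{N},x)\|$ and Lemma~\ref{lemma:holder:continuity:mean:std:dev} on $\sigma_{2p}(x)$, together with the elementary inequality $(a+b+c)^{\delta}\le a^{\delta}+b^{\delta}+c^{\delta}$. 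The resulting terms have the form $\|x-x^{*}\|^{\delta}$, $\|x-x^{*}\|^{\delta^{2}}$, and $\sigma_{2p}^{\delta}/N^{\delta/2}$; the intermediate term $\|x-x^{*}\|^{\delta^{2}}$ is exactly what forces the $\delta_{1}$ in the statement, since it equals $\|x-x^{*}\|^{\delta}$ when $\delta=1$ but is bounded only by $\max(1,\|x-x^{*}\|^{\delta})=\delta_{1}\vee\|x-x^{*}\|^{\delta}$ when $\delta<1$. The hard part will be making the random-radius substitution fully rigorous, because $R_{N}$ and the supremum are driven by the \emph{same} sample $\xi^{N}$; I would resolve this by a peeling argument over dyadic shells $\{R_{N}\in[2^{k-1}r_{0},2^{k}r_{0}]\}$ with $r_{0}:=\Ldpnorm{R_{N}}$, applying the deterministic chaining bound on each shell and summing the contributions using Markov at order $2p$ on $R_{N}$. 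Part~(ii) is substantially easier: compactness gives $\|z-x^{*}\|\le\diam(X)$ deterministically, so the random-radius step disappears, the chaining is applied with the fixed $r=\diam(X)$, and only the $\sigma_{p}$ moment intervenes. The same bounds transfer to $\bar z_{\beta}$ since $\|\bar z_{\beta}-x^{*}\|\le\alpha\|\Pi(x-\beta\widehat F(\xi^{N},x))-x^{*}\|+(1-\alpha)\|x-x^{*}\|$ admits the identical localization after renaming $\hat\alpha\mapsto\hat\alpha\beta$.
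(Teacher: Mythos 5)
Your architecture coincides with the paper's: the same VI-identity localization $\|z-x^*\|\le(1+L\hat\alpha)\left(\|x-x^*\|\vee\|x-x^*\|^\delta\right)+\hat\alpha\|\widehat\epsilon(\xi^N,x)\|$ obtained from $x^*=\Pi[x^*-\alpha_N T(x^*)]$ and nonexpansiveness, the same centering at $x^*$, a martingale (Burkholder--Davis--Gundy / Marcinkiewicz--Zygmund) bound for $\widehat\epsilon(\xi^N,x^*)$ and $\widehat\epsilon(\xi^N,x)$, and a chaining bound for the empirical-process supremum over a ball of \emph{deterministic} radius --- this last is the paper's Lemma \ref{lemma:error:decay:empirical:process}, which it proves by Panchenko self-normalization plus the $\mathcal{L}^2$-chaining Lemma \ref{lemma:lnorm:process} and Theorem \ref{thm:moment:emp:lugosi}; your Dudley/Rademacher route should deliver the same $\sqrt{d}\,L_q r^\delta/\sqrt{N}$ shape, with the caveat that $L_q$ for $q>p$ is only available in the unbounded case ($\mathsf{a}=2$ in Assumption \ref{assumption:holder:continuity}). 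Part (ii) and the transfer to $\overline z_\beta$ are unproblematic.

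The gap is the random-radius substitution. Your dyadic peeling over shells $A_k:=\{R_N\in[2^{k-1}r_0,2^kr_0)\}$ with $r_0:=\Ldpnorm{R_N}$ gives, per shell, $\Lpnorm{\unit_{A_k}S_k}\le\Ldpnorm{\unit_{A_k}}\cdot\Ldpnorm{S_k}\lesssim 2^{-(k-1)}\cdot L_{2p}(2^kr_0)^\delta/\sqrt{N}$, where the first factor is Markov at order $2p$ and the second is the chaining bound at radius $2^kr_0$. The shell sum is therefore $\sum_k 2^{k(\delta-1)}$, which diverges exactly at $\delta=1$ --- the Lipschitz case on which \textsf{Algorithm \ref{algorithm:DSSA:extragradient}} and the whole complexity analysis rest. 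H\"older with unequal exponents does not rescue this under the stated moment assumptions: gaining indicator decay beyond $2^{-k}$ requires Markov at an order above $2p$, while keeping the supremum in a norm at most $2p$ is forced by the absence of higher moments of $\mathsf{L}(\xi)$. The paper's resolution is a two-event split at the single deterministic threshold $s_*:=L_{2p}\left(\|x-x^*\|\vee\|x-x^*\|^\delta\right)$: on $\{\epsilon_N\le s_*\}$ the ball has deterministic radius $\mathsf{R}(s_*)$ and the chaining bound applies directly (this also avoids the $\sigma_{2p}^\delta/N^{\delta/2}$ cross terms that your $\Ldpnorm{R_N^\delta}$ bookkeeping produces), while on $\{\epsilon_N>s_*\}$ it abandons the empirical process entirely and uses the crude pointwise bound $\|\widehat\epsilon(\xi^N,z)-\widehat\epsilon(\xi^N,x^*)\|\le(\widehat L_N+L)\|z-x^*\|^\delta$ paired with $\Ldpnorm{\unit_{\{\epsilon_N>s_*\}}}\le\Ldpnorm{\epsilon_N}/s_*$; the factor $1/s_*$ then exactly cancels the leading $\|x-x^*\|\vee\|x-x^*\|^\delta$, which is the point of that choice of threshold. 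You should replace the multi-shell peeling by such a single-threshold argument; as written, your proof does not close for $\delta=1$.
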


For further detail on the constants of Theorem \ref{thm:variance:error:with:line:search}, see Remark \ref{rem:constants:thm:correlated:error} in Section \ref{section:proof:theorem}. To prove Theorem \ref{thm:variance:error:with:line:search}, we will crucially require intermediate results which rely on a branch of statistics called \emph{Empirical Process Theory}. Let $\{X_j\}_{j=1}^N$ be a sequence of \emph{independent} stochastic processes $X_j:=(X_{j,t})_{t\in\mathcal{T}}$ indexed by a countable set $\mathcal{T}$ with real-valued random components $X_{j,t}$. The associated \emph{empirical process} (EP) is the stochastic process $\mathcal{T}\in t\mapsto Z_t:=\sum_{j=1}^NX_{j,t}$. An essential quantity in this theory is $Z:=\sup_{t\in\mathcal{T}}Z_t$. If $\mathcal{T}=\{t\}$, then $Z$ is simply a sum of independent random variables. Otherwise, $Z$ is a much more complicated object. To understand $Z$, it is important to bound its expectation and variance. EPs arise in many different settings in mathematical statistics \cite{boucheron:lugosi:massart2013}.

We apply EP theory as a novel way to successfully analyze stochastic approximated line search schemes. Referring to \textsf{Algorithm \ref{algorithm:DSSA:extragradient}} and Theorem \ref{thm:variance:error:with:line:search}, we have $z^k=z(\xi^k;\alpha_k,x^k)$ and must control the correlated error $\widehat\epsilon(\xi^k,z(\xi^k;\alpha_k,x^k))$. Our strategy is to construct an EP that \emph{locally decouples} the dependence in $\widehat\epsilon(\xi^k,z^k)$ between $\xi^k$ and $z^k$ at the $k$-th iteration.\footnote{Recall that such dependence is produced by the need to evaluate $\widehat F(\xi^k,\cdot)$ along the path $\alpha\mapsto z^k(\alpha)$ in order to choose the stepsize $\alpha_k$. Analogous observations hold for \eqref{algo:armijo:rule2}: $z^k=\overline z_{\beta_k}(\xi^k;\alpha_k,x^k)$.} The intuition behind our decoupling technique is that, although $z^k$ is a function of $(\xi^k,x^k)$, $z^k$ \emph{lies at a ball $\mathbb{B}_k$ centered at any given $x^*\in X^*$ with radius of $\mathcal{O}(\Vert x^k-x^*\Vert+\Vert\widehat\epsilon(\xi^k,x^k)\Vert)$}. Based on this fact and that, by i.i.d. sampling, $\xi^k\perp\perp x^k$, we can decouple $\xi^k$ and $z^k$ using the following guidelines:
\begin{itemize}
\item[(i)] we \emph{condition} on the past information $\alg_k$, noting that $x^k\in\alg_k$ and $\xi^k\perp\perp\alg_k$,
\item[(ii)] we then \emph{control an \emph{EP}} indexed by the ball $\mathbb{B}_k$, 
\item[(iii)] we further note that in item (ii) we must also control $\widehat\epsilon(\xi^k,x^k)$ which affects the radius of the ball $\mathbb{B}_k$. Nevertheless, since $x^k\in\alg_k$ and $\xi^k\perp\perp\alg_k$, $\widehat\epsilon(\xi^k,x^k)$ is a \emph{martingale difference} and, hence, easier to estimate.
\end{itemize}

The developed theory is presented in consecutive sections. The statistical preliminaries used outside the proofs are carefully introduced so to make the presentation as self contained as possible. We refer to the excelent book \cite{boucheron:lugosi:massart2013} by S. Boucheron, G. Lugosi and P. Massart, a standard reference in the area. A global outline is as follows. Typically, if $Z:=\sup_{t\in\mathcal{T}}Z_t$ for a stochastic process $(Z_t)_{t\in\mathcal{T}}$, an upper bound on $\esp[Z]$ is derived under a suitable tail property on the increments of $(Z_t)_{t\in\mathcal{T}}$ and chaining arguments \cite{dudley1967}. In Section \ref{section:L2:norm}, we derive instead an upper bound on $\Lnorm{Z}\ge\esp[Z]$ in Lemma \ref{lemma:lnorm:process}. The main reason to do so is that we assume \emph{heavy-tailed} random operators satisfying Assumption \ref{assumption:holder:continuity}. As a consequence, we will work with the \emph{square} of \emph{sub-Gaussian} random variables (see Definition \ref{def:subGaussian}). In Section \ref{section:Lp:norm}, we apply Lemma \ref{lemma:lnorm:process} derived in Section \ref{section:L2:norm} to obtain the general Lemma \ref{lemma:error:decay:empirical:process}. This lemma provides an \emph{uniform bound over a ball} on the $\mathcal{L}^p$-norm of \emph{empirical error increments of heavy-tailed H\"older continuous operators}, the main stochastic object in this work. \emph{Self-normalization} (see \cite{panchenko2003} and Theorem \ref{thm:panchenko}), variance bounds (Theorem \ref{thm:moment:emp:lugosi}) and a simple decoupling argument based on H\"older's inequality are also needed for that purpose. Finally, the proof of Theorem \ref{thm:variance:error:with:line:search} is given in Section \ref{section:proof:theorem}. It relies on Lemma \ref{lemma:error:decay:empirical:process}, the Burkholder-Davis-Gundy's moment inequality for martingales in Hilbert spaces \cite{burkholder:davis:gundy1972,marinelli:rockner2016} and the ideas of items (i)-(iii) above. 

\subsection{The $\mathcal{L}^2$-norm of suprema of sub-Gaussian processes}\label{section:L2:norm}

In order to bound the expectation or the $\mathcal{L}^2$-norm of $\sup_{t\in\mathcal{T}}Z_t$ for a stochastic process $(Z_t)_{t\in\mathcal{T}}$, it is important to understand the tail behavior of its increments $(Z_t-Z_{t'})_{(t,t')\in\mathcal{T}\times\mathcal{T}}$. We will thus need the definitions of \emph{sub-Gaussian} and \emph{sub-Gamma} random variables. 
\begin{definition}[sub-Gaussian and sub-Gamma random variables]\label{def:subGaussian}
A random variable $Y\in\re$ is called \emph{sub-Gaussian with variance factor $\sigma^2>0$} if, for all $s\in\re$,
$
\ln\esp\left[e^{sY}\right]\le\frac{\sigma^2s^2}{2}.
$
A random variable $Y\in\re$ is called \emph{sub-Gamma on the right tail with variance factor $\sigma^2>0$ and scale parameter $c>0$} if, for all $0<s<\frac{1}{c}$,
$
\ln\esp\left[e^{sY}\right]\le\frac{\sigma^2s^2}{2(1-cs)}.
$
\end{definition}

Hence, a random variable $Y$ is sub-Gaussian if $Y$ and $-Y$ are sub-Gamma on the right tail with scale parameter $c=0$. In order to compute $\mathcal{L}^2$-norms under heavier tails, we will need also the following result which establishes that the centered \emph{square} of a sub-Gaussian random variable is sub-Gamma on the right tail. It follows, e.g., as a corollary of Theorem 2.1 and Remark 2.3 in \cite{hsu:kakade:zhang2012} in the one dimensional setting.
\begin{theorem}[Square of sub-Gaussian random variables]\label{thm:quad:form}
Suppose that $Y\in\re$ is a sub-Gaussian random variable with variance factor $\sigma^2$. Then, 
for all $0\le s<\frac{1}{2\sigma^2}$,
$
\ln\esp\left[e^{sY^2}\right]\le\sigma^2s+\frac{\sigma^4s^2}{1-2\sigma^2s}.
$
\end{theorem}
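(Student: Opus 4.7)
The plan is to use a standard Gaussian decoupling trick followed by a short elementary inequality. The starting observation is that for any $a\in\re$ and an independent standard Gaussian $Z\sim N(0,1)$,
$$
\esp_Z\!\left[e^{aZ}\right]=e^{a^2/2},\qquad\text{so that}\qquad e^{sY^2}=\esp_Z\!\left[e^{\sqrt{2s}\,YZ}\right].
$$
Taking expectation with respect to $Y$ and applying Fubini's theorem gives
$$
\esp_Y\!\left[e^{sY^2}\right]=\esp_Z\!\left[\esp_Y\!\left[e^{\sqrt{2s}\,YZ}\,\big|\,Z\right]\right].
$$

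The second step is to invoke the sub-Gaussian hypothesis on $Y$: conditional on $Z$, $\sqrt{2s}\,Z$ is just a deterministic scalar, so
$$
\esp_Y\!\left[e^{\sqrt{2s}\,YZ}\,\big|\,Z\right]\le \exp\!\left(\frac{\sigma^{2}(2s)Z^{2}}{2}\right)=\exp(\sigma^{2}sZ^{2}).
$$
Combining with the previous display and using the well-known Gaussian identity $\esp[e^{tZ^{2}}]=(1-2t)^{-1/2}$, valid for $t<1/2$, with $t=\sigma^{2}s$ (which is legitimate exactly under the assumption $s<1/(2\sigma^{2})$), we obtain
$$
\esp\!\left[e^{sY^{2}}\right]\le \frac{1}{\sqrt{1-2\sigma^{2}s}},\qquad\text{i.e.,}\qquad \ln\esp\!\left[e^{sY^{2}}\right]\le -\tfrac{1}{2}\ln(1-2\sigma^{2}s).
$$

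The final step is purely analytic: we must verify that, writing $u:=2\sigma^{2}s\in[0,1)$,
$$
-\tfrac{1}{2}\ln(1-u)\le \tfrac{u}{2}+\tfrac{u^{2}/4}{1-u},
$$
which after multiplying by $2$ is equivalent to $-\ln(1-u)\le u+\tfrac{u^{2}/2}{1-u}$. The cleanest way is to expand in power series: $-\ln(1-u)=\sum_{k\ge1}u^{k}/k$, while $u+\tfrac{u^{2}/2}{1-u}=u+\tfrac{1}{2}\sum_{k\ge2}u^{k}$. The $k=1$ and $k=2$ terms coincide, and for $k\ge3$ one has $1/k\le 1/2$, so the inequality holds term by term. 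Rewriting $u=2\sigma^{2}s$ yields the claimed bound $\sigma^{2}s+\sigma^{4}s^{2}/(1-2\sigma^{2}s)$.

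I do not expect any serious obstacle here: Gaussian decoupling reduces the problem to an exact Gaussian MGF computation, and the remaining step is an elementary log-inequality established by a transparent power-series comparison. The only subtlety is making sure the range of $s$ is carried through correctly in each step, which is why $s<1/(2\sigma^{2})$ must be assumed from the outset so that $\esp_Z[e^{\sigma^{2}sZ^{2}}]$ is finite.
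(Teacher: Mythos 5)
Your proof is correct. Note that the paper does not actually prove this statement: it only cites it as a one-dimensional corollary of Theorem 2.1 and Remark 2.3 of Hsu--Kakade--Zhang, so any complete argument here is a welcome addition. Your route --- writing $e^{sY^2}=\esp_Z[e^{\sqrt{2s}\,YZ}]$ for an independent standard Gaussian $Z$, applying Tonelli, invoking the sub-Gaussian bound conditionally on $Z$, and then using the exact identity $\esp[e^{tZ^2}]=(1-2t)^{-1/2}$ for $t<\frac12$ --- is in fact the same Gaussian-decoupling mechanism that underlies the cited reference, so you have essentially reconstructed the intended proof rather than found a different one. The closing step, reducing the claim to $-\ln(1-u)\le u+\frac{u^2/2}{1-u}$ on $[0,1)$ and verifying it coefficientwise in the power series, is airtight (the $k=1,2$ terms match exactly and $1/k\le 1/2$ for $k\ge3$), and you correctly track that the constraint $s<\frac{1}{2\sigma^2}$ is exactly what makes the Gaussian moment generating function finite. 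No gaps.
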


One celebrated technique to understand $\sup_{t\in\mathcal{T}}Z_t$ for a stochastic process $(Z_t)_{t\in\mathcal{T}}$ is the so called \emph{chaining method} (see e.g. \cite{dudley1967}). This consists in approximating $\mathcal{T}$ by a increasing chain of finer discrete subsets. In this quest, the ``complexity'' of the index set $\mathcal{T}$ plays an important role. This is formalized in the next definition.
\begin{definition}[Metric entropy]\label{definition:metric:entropy}
Let $(\mathcal{T},d)$ be a totally bounded metric space. Given $\theta>0$, a $\theta$\emph{-net} for $\mathcal{T}$ is a finite set $\mathcal{T}_\theta\subset\mathcal{T}$ of maximal cardinality $N(\theta,\mathcal{T})$ such that for all $s,t\in\mathcal{T}_\theta$ with $s\neq t$, one has $\dist(s,t)>\theta$. The $\theta$\emph{-entropy number} is $H(\theta,\mathcal{T}):=\ln N(\theta,\mathcal{T})$. The function $H(\cdot,\mathcal{T})$ is called the \emph{metric entropy} of $\mathcal{T}$.
\end{definition}
In particular, for all $t\in\mathcal{T}$, there is $s\in\mathcal{T}_\theta$ such that $\dist(s,t)\le\theta$. Note that the metric entropy is a nonincreasing real-valued function. The next lemma establishes the metric entropy of the Euclidean unit ball $\mathbb{B}$ of $\re^d$ (see Lemma 13.11 of \cite{boucheron:lugosi:massart2013}).

\begin{lemma}[Metric entropy of Euclidean balls]\label{lemma:entropy}
Let $\mathbb{B}$ be the Euclidean unit ball of $\re^d$. For all $\theta\in(0,1]$, 
$
H(\theta,\mathbb{B})\le d\ln\left(1+\frac{1}{\theta}\right).
$
\end{lemma}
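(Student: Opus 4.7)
The plan is a classical volume-packing argument: a $\theta$-separated subset of $\mathbb{B}$ cannot be too large because pairwise disjoint Euclidean balls of radius $\theta/2$ centered at its points must all fit inside a slightly enlarged copy of $\mathbb{B}$, and Lebesgue volume in $\re^d$ scales as $r^d$.

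Concretely, I would let $\mathcal{T}_\theta=\{t_1,\dots,t_N\}$ be a maximal $\theta$-separated subset of $\mathbb{B}$ with $N:=N(\theta,\mathbb{B})$, so that $\Vert t_i-t_j\Vert>\theta$ for all $i\neq j$. A direct application of the triangle inequality shows that the open balls $\mathbb{B}[t_i,\theta/2]$ are pairwise disjoint: any common point would force $\Vert t_i-t_j\Vert<\theta$. Since each $t_i$ lies in the closed unit ball, each of these small balls is contained in $\mathbb{B}[0,1+\theta/2]$. Comparing Lebesgue volumes yields
\[
N\cdot\left(\frac{\theta}{2}\right)^d\le\left(1+\frac{\theta}{2}\right)^d,
\qquad\text{hence}\qquad N\le\left(1+\frac{2}{\theta}\right)^d,
\]
so taking natural logarithms already gives $H(\theta,\mathbb{B})\le d\ln(1+2/\theta)$, which has the same logarithmic structure as the desired bound. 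To obtain the precise constant $d\ln(1+1/\theta)$, one refines the geometric containment (or tightens the packing--covering comparison) exactly as in the proof of Lemma 13.11 in Boucheron--Lugosi--Massart; the structure of the argument is unchanged.

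The only mild obstacle is squeezing out the exact constant: the naive enclosing ball $\mathbb{B}[0,1+\theta/2]$ gives the factor $1+2/\theta$, and recovering $1+1/\theta$ requires a slightly more careful geometric accounting. For the downstream use in this paper (chaining-type integrals and metric-entropy control of empirical-process increments), this constant is immaterial since $\theta\in(0,1]$ means both forms yield the same $d\log(1/\theta)$ behavior.
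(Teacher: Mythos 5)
Your volume--packing argument is the standard proof of this entropy bound, and since the paper offers no proof of its own (it simply cites Lemma 13.11 of Boucheron--Lugosi--Massart), the packing/volume comparison is exactly the intended route. The disjointness of the balls $\mathbb{B}[t_i,\theta/2]$, their containment in $\mathbb{B}[0,1+\theta/2]$, and the resulting bound $N(\theta,\mathbb{B})\le(1+2/\theta)^d$ are all correct.

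The genuine gap is the last step, where you promise that a ``slightly more careful geometric accounting'' upgrades the constant to $1+1/\theta$. Under the paper's own Definition \ref{definition:metric:entropy}, where a $\theta$-net only requires pairwise distances strictly greater than $\theta$, no such refinement exists: for $d=1$, $\theta=2/3$, the three points $-1,0,1$ of $\mathbb{B}=[-1,1]$ are pairwise at distance at least $1>\theta$, so $N(\theta,\mathbb{B})\ge 3$, while $(1+1/\theta)^d=5/2<3$. The sharper constant $1+1/\theta$ is the one obtained when the separation is $2\theta$ (so that radius-$\theta$ balls are disjoint), which is a different normalization of the net; with the convention actually stated in the paper, your bound $H(\theta,\mathbb{B})\le d\ln(1+2/\theta)$ is the correct conclusion of the argument, and you should state the lemma with that constant rather than defer to an unavailable refinement. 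As you correctly observe, this is harmless downstream: the entropy enters the chaining estimates of Lemma \ref{lemma:lnorm:process} and the proof of Lemma \ref{lemma:error:decay:empirical:process} only through bounds that absorb universal constants, and $\ln(1+2/\theta)\le 2\ln(1+1/\theta)$ for $\theta\in(0,1]$, so every subsequent estimate survives with the weaker constant.
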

Hence, the ``complexity'' of $\mathbb{B}$ is proportional to $d$, an effect perceived in high-dimensional problems. However, note that $H(\theta,\mathbb{B})$ grows slowly when the discretization precision $\theta$ diminishes. This is a key property in order for the chaining method to work.

Before proving the main Lemma \ref{lemma:lnorm:process} in this section, we state one more needed preliminary result. It bounds the expectation of the maximum of a \emph{finite} number of sub-Gamma random variables (see, e.g., Corollary 2.6 of \cite{boucheron:lugosi:massart2013}). It is an essential lemma while using discretization arguments.
\begin{lemma}[Expectation of maxima of sub-Gamma random variables]\label{lemma:maximal:inequality}
Let $\{Y_i\}_{i=1}^N$ be real-valued sub-Gamma random variables on the right tail with variance factor $\sigma^2>0$ and scale parameter $c>0$. Then 
$$
\esp\left[\max_{i=1,\ldots,N}Y_i\right]\le \sqrt{2\sigma^2\ln N}+c\ln N.
$$
\end{lemma}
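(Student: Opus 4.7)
The plan is to use the standard Chernoff--Jensen exponential moment device, combined with the sub-Gamma MGF bound from Definition \ref{def:subGaussian}, and then optimize the parameter $s$ in the admissible range $(0,1/c)$.

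First I would exploit monotonicity and convexity of $\exp$ to write, for any $s\in(0,1/c)$,
\begin{equation*}
\exp\Bigl(s\,\esp\bigl[\max_{i\in[N]}Y_i\bigr]\Bigr)\le \esp\Bigl[\exp\bigl(s\max_{i\in[N]}Y_i\bigr)\Bigr]=\esp\Bigl[\max_{i\in[N]}e^{sY_i}\Bigr]\le\sum_{i=1}^N\esp[e^{sY_i}],
\end{equation*}
by Jensen's inequality applied to the (concave) $\ln$ on the left and the crude bound $\max_i a_i\le \sum_i a_i$ for nonnegative $a_i$ on the right. By the sub-Gamma hypothesis, each summand is bounded by $\exp\!\bigl(\sigma^2 s^2/(2(1-cs))\bigr)$, so taking logarithms and dividing by $s$ yields the working inequality
\begin{equation*}
\esp\bigl[\max_{i\in[N]}Y_i\bigr]\le \frac{\ln N}{s}+\frac{\sigma^2 s}{2(1-cs)}, \qquad s\in(0,1/c).
\end{equation*}

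The remaining step is to minimize the right-hand side over $s\in(0,1/c)$. The clean way I would carry this out is a reparametrization: set $s=t/(\sigma^2+ct)$ for $t>0$, which automatically satisfies $s<1/c$. Then one checks the identities $1/s=\sigma^2/t+c$ and $\sigma^2 s/(2(1-cs))=t/2$, so the bound becomes
\begin{equation*}
\esp\bigl[\max_{i\in[N]}Y_i\bigr]\le \frac{\sigma^2\ln N}{t}+\frac{t}{2}+c\ln N.
\end{equation*}
The first two terms are optimized at $t=\sigma\sqrt{2\ln N}$, where their sum equals $\sigma\sqrt{2\ln N}=\sqrt{2\sigma^2\ln N}$, yielding the claimed bound $\sqrt{2\sigma^2\ln N}+c\ln N$.

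There is no real obstacle: the argument is elementary and each step is just Markov/Jensen together with the definition of sub-Gamma on the right tail. The only point requiring a small amount of care is that one must stay inside the admissible interval $s\in(0,1/c)$ when optimizing, which is exactly what the reparametrization $s=t/(\sigma^2+ct)$ ensures for every $t>0$.
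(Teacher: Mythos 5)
Your proof is correct: the Chernoff--Jensen bound $\esp[\max_i Y_i]\le \frac{\ln N}{s}+\frac{\sigma^2 s}{2(1-cs)}$ followed by the substitution $s=t/(\sigma^2+ct)$ and optimization at $t=\sigma\sqrt{2\ln N}$ gives exactly the claimed bound. The paper does not prove this lemma itself but cites Corollary 2.6 of \cite{boucheron:lugosi:massart2013}, and your argument is precisely the standard proof given there, so there is nothing to add.
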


\begin{lemma}[$\mathcal{L}^2$-norm of suprema of sub-Gaussian processes]\label{lemma:lnorm:process} 
Let $(\mathcal{T},d)$ be a totally bounded metric space and $\theta:=\sup_{t\in\mathcal{T}}\dist(t,t_0)$ for some $t_0\in\mathcal{T}$. Suppose $(Z_t)_{t\in\mathcal{T}}$ is a continuous stochastic process for which there exist $a,v>0$ and $\delta\in(0,1]$ such that, for all $t,t'\in\mathcal{T}$ and all $\lambda>0$,
\begin{equation}\label{lemma:lnorm:process:eq0}
\ln\esp[\exp\{\lambda(Z_t-Z_{t'})\}]\le a\dist(t,t')^\delta\lambda+\frac{v\dist(t,t')^{2\delta}\lambda^{2}}{2}.
\end{equation}
Then
$$
\Lnorm{\sup_{t\in\mathcal{T}}Z_t-Z_{t_0}}\le
(3\theta)^\delta\sqrt{2(a^2+v)}\left[\frac{1}{2^\delta-1}+\sum_{i=1}^\infty\frac{\sqrt[4]{8H\left(\theta 2^{-i},\mathcal{T}\right)}+2\sqrt{H\left(\theta 2^{-i},\mathcal{T}\right)}}{2^{i\delta}}\right].
$$
\end{lemma}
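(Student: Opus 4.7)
The plan is to execute a chaining argument in $\mathcal{L}^2$ norm, using the sub-Gamma promotion of squared sub-Gaussians (Theorem~\ref{thm:quad:form}) in place of the more standard Dudley-type entropy integral that handles only the expectation. First I would fix the scales $\theta_i:=\theta 2^{-i}$ and, via Definition~\ref{definition:metric:entropy}, pick $\theta_i$-nets $\mathcal{T}_i\subset\mathcal{T}$ with $\ln|\mathcal{T}_i|\le H_i:=H(\theta_i,\mathcal{T})$, taking $\mathcal{T}_0=\{t_0\}$ (possible since $\sup_{t}\dist(t,t_0)=\theta$). Letting $\pi_i(t)\in\mathcal{T}_i$ denote a closest point to $t$, continuity of $Z_\cdot$ lets me telescope along the chain $t_0=\pi_0(t),\pi_1(t),\ldots$ as $Z_t-Z_{t_0}=\sum_{i\ge 1}(Z_{\pi_i(t)}-Z_{\pi_{i-1}(t)})$, with consecutive links at distance $\dist(\pi_i(t),\pi_{i-1}(t))\le\theta_i+\theta_{i-1}=3\theta_i$. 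Taking the supremum in $t$ and invoking Minkowski reduces the task to controlling $\sum_{i\ge 1}\Lnorm{U_i}$, where $U_i:=\max_{(s,s')\in S_i}(Z_s-Z_{s'})$ and $S_i$ collects the at most $|\mathcal{T}_i|\,|\mathcal{T}_{i-1}|\le e^{2H_i}$ pairs $(\pi_i(t),\pi_{i-1}(t))$ arising along the chain.

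The core step is to bound each $\Lnorm{U_i}$. Applying the hypothesis \eqref{lemma:lnorm:process:eq0} to both $(s,s')$ and $(s',s)$ yields the two-sided bound $\ln\esp[e^{\lambda(Z_s-Z_{s'})}]\le a\dist(s,s')^\delta|\lambda|+v\dist(s,s')^{2\delta}\lambda^2/2$ for all $\lambda\in\re$ and $\dist(s,s')\le 3\theta_i$. I would separate drift from noise via $Z_s-Z_{s'}=a\dist(s,s')^\delta+(Z_s-Z_{s'}-a\dist(s,s')^\delta)$; the surplus is sub-Gaussian on the right with variance factor $v\dist(s,s')^{2\delta}$, and an argument in the spirit of Theorem~\ref{thm:quad:form} (obtained directly from Chernoff on the two-sided tails) shows that the squared increment $(Z_s-Z_{s'})^2$ is sub-Gamma with variance factor of order $v^2\dist(s,s')^{4\delta}$, scale of order $v\dist(s,s')^{2\delta}$, and mean of order $(a^2+v)\dist(s,s')^{2\delta}$. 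Lemma~\ref{lemma:maximal:inequality} applied to the centered squares, together with the entropy bound $\ln|S_i|\le 2H_i$, then gives
\begin{equation*}
\esp\!\left[\max_{(s,s')\in S_i}(Z_s-Z_{s'})^2\right]\le 2(a^2+v)(3\theta_i)^{2\delta}+C_1\, v(3\theta_i)^{2\delta}\sqrt{H_i}+C_2\, v(3\theta_i)^{2\delta}H_i,
\end{equation*}
with absolute constants $C_1,C_2$ calibrated so that the next step recovers the target coefficients $\sqrt[4]{8H_i}$ and $2\sqrt{H_i}$.

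Augmenting $S_i$ with the trivial pair $(t_0,t_0)$ ensures $U_i\ge 0$, whence $U_i^2\le\max_{(s,s')}(Z_s-Z_{s'})^2$ and $\Lnorm{U_i}\le\sqrt{\esp[\max(Z_s-Z_{s'})^2]}$. Using $\sqrt{x+y+z}\le\sqrt x+\sqrt y+\sqrt z$ to decouple the three pieces, together with the trivial $\sqrt v\le\sqrt{a^2+v}$, produces $\Lnorm{U_i}\le(3\theta_i)^\delta\sqrt{2(a^2+v)}\bigl[1+\sqrt[4]{8H_i}+2\sqrt{H_i}\bigr]$. Substituting $(3\theta_i)^\delta=(3\theta)^\delta 2^{-i\delta}$ and summing over $i\ge 1$---the ``$1$'' in the bracket producing the first term via the geometric series $\sum_{i\ge 1}2^{-i\delta}=1/(2^\delta-1)$---delivers the stated inequality. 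The hardest part will be the middle paragraph: because $(Z_s-Z_{s'})$ is not centered (its mean may have magnitude up to $a\dist(s,s')^\delta$), Theorem~\ref{thm:quad:form} does not apply off the shelf, and keeping the drift $a$ and the fluctuation scale $\sqrt{v}$ coupled cleanly into the single factor $\sqrt{2(a^2+v)}$ rather than a looser combination demands tight bookkeeping of the tail integrals.
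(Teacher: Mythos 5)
Your proposal is correct and follows essentially the same route as the paper's proof: the same dyadic nets and $3\theta_i$ link bounds, the same recentering by the drift $a_i$ followed by the promotion of squared increments to sub-Gamma variables via Theorem \ref{thm:quad:form}, the same maximal inequality (Lemma \ref{lemma:maximal:inequality}) with entropy count $e^{2H(\theta_i,\mathcal{T})}$, and the same $\sqrt{x+y+z}\le\sqrt{x}+\sqrt{y}+\sqrt{z}$ recombination yielding the single factor $\sqrt{2(a^2+v)}$. The only cosmetic difference is that you sum $\Lnorm{U_i}$ by Minkowski, whereas the paper expands $\sup_t(Z_t-Z_{t_0})^2$ as a double sum over chain links and applies Cauchy--Schwarz, which amounts to the same inequality.
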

\begin{proof}
We first note that the continuity of $t\mapsto Z_t$ and separability of $\mathcal{T}$ imply that, for any continuous function $f$, $\sup_{t\in\mathcal{T}}f(Z_t)$ is measurable since it equals $\sup_{t\in\mathcal{T'}}f(Z_t)$ for a countable dense subset $\mathcal{T}'$ of $\mathcal{T}$. 

Set $\mathcal{T}_0:=\{t_0\}$. Given $i\in\mathbb{N}$, we set $\theta_i:=\theta2^{-i}$ and denote by $\mathcal{T}_i$ a $\theta_i$-net for $\mathcal{T}$ with maximal cardinality $N(\theta_i,\mathcal{T})$. We also denote by $\Pi_i:\mathcal{T}\rightarrow\mathcal{T}_i$ the metric projection associated to $\dist$, that is, for any $t\in\mathcal{T}$, $\Pi_i(t)\in\argmin_{t'\in\mathcal{T}_i}\dist(t,t')$. By  the definition of a net, we have that, for all $t\in\mathcal{T}$ and $i\in\mathbb{N}$,
$
\dist(t,\Pi_i(t))\le\theta_i.
$
By the triangular inequality, this implies that for all $t\in\mathcal{T}$ and $i\in\mathbb{N}$,
\begin{equation}\label{lemma:lnorm:process:eq4}
\dist(\Pi_i(t),\Pi_{i+1}(t))\le\theta_i+\theta_{i+1}=3\theta_{i+1}.
\end{equation}

For any $t\in\mathcal{T}$, $\lim_{i\rightarrow\infty}\Pi_i(t)=t$ and $\Pi_0(t)=t_0$ imply that 
\begin{equation*}
Z_t=Z_{t_0}+\sum_{j=0}^\infty(Z_{\Pi_{i+1}(t)}-Z_{\Pi_i(t)}).
\end{equation*}
In the following, we denote $\Delta_i(t):=Z_{\Pi_{i+1}(t)}-Z_{\Pi_i(t)}$ for all $i\in\mathbb{N}$ and $t\in\mathcal{T}$. The above equality implies that
$
(Z_t-Z_{t_0})^2=\sum_{i=0}^\infty\sum_{k=0}^\infty\Delta_i(t)\Delta_k(t).
$
Hence,
\begin{eqnarray}
\esp\left[\sup_{t\in\mathcal{T}}(Z_t-Z_{t_0})^2\right]&\le &
\sum_{i=0}^\infty\sum_{k=0}^\infty\esp\left[\sup_{t\in\mathcal{T}}\left\{\Delta_i(t)\Delta_k(t)\right\}\right]\nonumber\\
&\le &\sum_{i=0}^\infty\sum_{k=0}^\infty\Lnorm{\sup_{t\in\mathcal{T}}|\Delta_i(t)|}\cdot\Lnorm{\sup_{t\in\mathcal{T}}|\Delta_k(t)|}\nonumber\\
&=&\left[\sum_{i=0}^\infty\Lnorm{\sup_{t\in\mathcal{T}}|\Delta_i(t)|}\right]^2,\label{lemma:lnorm:process:eq2}
\end{eqnarray}
using H\"older's inequality in the second inequality. 

Fix $i\in\mathbb{N}$. Since $N(\theta_{i},\mathcal{T})\le N(\theta_{i+1},\mathcal{T})$, we have that
\begin{equation}\label{lemma:lnorm:process:eq3}
|\{(\Pi_i(t),\Pi_{i+1}(t)):t\in\mathcal{T}\}|\le N(\theta_{i+1},\mathcal{T})^2
=e^{2H(\theta_{i+1})}.
\end{equation}
Relations \eqref{lemma:lnorm:process:eq0} and \eqref{lemma:lnorm:process:eq4} imply that, for all $t\in\mathcal{T}$,
\begin{eqnarray*}
\ln\esp\left[e^{\lambda\Delta_i(t)}\right]\le a\dist\left(\Pi_i(t),\Pi_{i+1}(t)\right)^{\delta}\lambda+\frac{v\dist\left(\Pi_i(t),\Pi_{i+1}(t)\right)^{2\delta}\lambda^2}{2}
\le a_i\lambda+\frac{v_i\lambda^2}{2},
\end{eqnarray*}
where we have defined $a_i:=a(3\theta_{i+1})^\delta$ and $v_i:=v(3\theta_{i+1})^{2\delta}$. The above relation implies that, for all $t\in\mathcal{T}$, $\Delta_i(t)-a_i$ is sub-Gaussian with variance factor $v_i$. This, Theorem \ref{thm:quad:form}, the bound $\Delta_i(t)^2\le2[\Delta_i(t)-a_i]^2+2a_i^2$ and the change of variables $\lambda\mapsto2\lambda$ imply that, for all $t\in\mathcal{T}$ and $0<\lambda<\frac{1}{4v_i}$,
\begin{equation}\label{lemma:lnorm:process:eq5}
\ln\esp\left[e^{\lambda\Delta_i(t)^2}\right]\le 2(a_i^2+v_i)\lambda+\frac{4v_i^2\lambda^2}{(1-4v_i\lambda)}, 
\end{equation}
that is, for all $t\in\mathcal{T}$, $\Delta_i(t)^2-2(a_i^2+v_i)$ is sub-Gamma on the right tail with variance factor $8v_i^2$ and scale parameter $4v_i$. Relations \eqref{lemma:lnorm:process:eq3}-\eqref{lemma:lnorm:process:eq5} 
and Lemma \ref{lemma:maximal:inequality} imply further that
\begin{eqnarray*}
\esp\left[\sup_{t\in\mathcal{T}}\Delta_i(t)^2\right]&\le &
2(a_i^2+v_i)+\sqrt{2\cdot8v_i^2\cdot2H(\theta_{i+1},\mathcal{T})}
+4v_i\cdot2H(\theta_{i+1},\mathcal{T})\\
&\le &2\cdot9^\delta(a^2+v)\left[\theta_{i+1}^{2\delta}+\theta_{i+1}^{2\delta}\sqrt{8H(\theta_{i+1},\mathcal{T})}+4\theta_{i+1}^{2\delta} H(\theta_{i+1},\mathcal{T})\right].
\end{eqnarray*}
Taking the square root in the above relation we get
\begin{equation}\label{lemma:lnorm:process:eq6}
\Lnorm{\sup_{t\in\mathcal{T}}|\Delta_i(t)|}\le
3^\delta\sqrt{2(a^2+v)}\left[\theta_{i+1}^{\delta}+\theta_{i+1}^{\delta}\sqrt[4]{8H(\theta_{i+1},\mathcal{T})}+2\theta_{i+1}^{\delta}\sqrt{H(\theta_{i+1},\mathcal{T})}\right].
\end{equation}

We now take the square root in \eqref{lemma:lnorm:process:eq2} and use \eqref{lemma:lnorm:process:eq6}, valid for any $i\in\mathbb{N}$, obtaining
\begin{eqnarray*}
\Lnorm{\sup_{t\in\mathcal{T}}Z_t-Z_{t_0}}&\le &
3^\delta\sqrt{2(a^2+v)}\left[\sum_{i=1}^\infty\theta_{i}^{\delta}+\sum_{i=1}^\infty\theta_{i}^{\delta}\sqrt[4]{8H(\theta_{i},\mathcal{T})}+2\sum_{i=1}^\infty\theta_{i}^{\delta}\sqrt{H(\theta_{i},\mathcal{T})}\right].
\end{eqnarray*}
To finish the proof, we use $\theta_i=\theta 2^{-i}$ and $\sum_{i=1}^\infty\theta_{i}^{\delta}=\frac{\theta^\delta}{2^\delta-1}$ in the above inequality.
\end{proof}

\subsection{Heavy-tailed H\"older continuous operators: self-normalization and $\mathcal{L}^q$-norms of suprema of EPs}\label{section:Lp:norm}

We will now focus on bounds of EPs associated to sums of the form $x\mapsto\sum_{j=1}^N\frac{F(\xi_j,x)-T(x)}{N}$, where $\{\xi_j\}_{j=1}^N$ is an i.i.d. sample of $\probn$ and $F:\Xi\times X\rightarrow\re^d$ satisfies Assumption \ref{assumption:holder:continuity}. The main result proved in this section is Lemma \ref{lemma:error:decay:empirical:process}. Its proof will need Lemma \ref{lemma:lnorm:process} and the following theorem (see Theorem 15.14 in \cite{boucheron:lugosi:massart2013}).

\begin{theorem}[$\mathcal{L}^q$-norm for suprema of EPs]\label{thm:moment:emp:lugosi}
Let $\{X_j\}_{j=1}^N$ be an independent sequence of stochastic processes $X_j:=(X_{j,t})_{t\in\mathcal{T}}$ indexed by a countable set $\mathcal{T}$ with real-valued random components $X_{j,t}$ such that $\esp[X_{j,t}]=0$ and $\esp[X_{j,t}^2]<\infty$ for all $t\in\mathcal{T}$ and $j\in[N]$. Define $Z:=\sup_{t\in\mathcal{T}}\left|\sum_{j=1}^NX_{j,t}\right|$ and  
\begin{eqnarray*}
M:=\max_{j\in[N]}\sup_{t\in\mathcal{T}}|X_{j,t}|,\quad\quad
\widehat\sigma^2 :=\sup_{t\in\mathcal{T}}\sum_{j=1}^N\esp\left[X_{j,t}^2\right].
\end{eqnarray*}
Set $\kappa:=\frac{\sqrt{e}}{2(\sqrt{e}-1)}<1.271$. Then, for all $q\ge2$,
$$
\Lqnorm{Z}\le2\esp[Z]+2\sqrt{2\kappa q}\widehat\sigma+4\sqrt{\kappa q}\Lnorm{M}+20\kappa q\Lqnorm{M}. 
$$
\end{theorem}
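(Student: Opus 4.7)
The plan is a three-stage argument following the lines of Boucheron--Lugosi--Massart: (i) prove a Bousquet-type sub-Gamma concentration inequality for $Z$ in the bounded-summand regime, (ii) convert the concentration into an $\mathcal{L}^q$-moment bound in which $\esp[Z]$ absorbs a cross term via AM-GM, and (iii) truncate at a well-chosen threshold to remove the boundedness assumption, paying a controllable price in $\Lnorm{M}$ and $\Lqnorm{M}$.

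For (i), temporarily assume $\max_{j,t}|X_{j,t}|\le b$ a.s. The entropy method---a modified log-Sobolev inequality for product measures combined with the Herbst argument applied to the convex functional $Z$---yields Bousquet's inequality: $Z-\esp[Z]$ is sub-Gamma on the right tail with variance factor $\widehat\sigma^2+2b\esp[Z]$ and scale parameter $b$, and the constant $\kappa=\sqrt{e}/(2(\sqrt{e}-1))$ emerges as the sharp Bennett constant when exponentiating the tail. For (ii), the generic sub-Gamma-to-moment translation by tail integration gives, for $q\ge 2$,
\[
\Lqnorm{Z-\esp[Z]} \le \sqrt{2\kappa q(\widehat\sigma^2+2b\esp[Z])}+\kappa q b.
\]
Splitting the root as $\sqrt{2\kappa q(\widehat\sigma^2+2b\esp[Z])}\le\sqrt{2\kappa q}\,\widehat\sigma+2\sqrt{\kappa q\,b\,\esp[Z]}$ and then using $2\sqrt{\kappa q\,b\,\esp[Z]}\le\esp[Z]+\kappa q b$, together with $\Lqnorm{Z}\le\esp[Z]+\Lqnorm{Z-\esp[Z]}$, produces the bounded-case bound $\Lqnorm{Z}\le 2\esp[Z]+\sqrt{2\kappa q}\,\widehat\sigma+2\kappa q b$.

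For (iii), the unbounded case follows by a truncation-and-decoupling decomposition. Write $X_{j,t}=X^<_{j,t}+X^>_{j,t}$ with $X^<_{j,t}:=X_{j,t}\mathbf{1}_{|X_{j,t}|\le u}-\esp[X_{j,t}\mathbf{1}_{|X_{j,t}|\le u}]$ and $X^>_{j,t}$ the remainder, so that $X^<_{j,t}$ is centered and bounded by $2u$. Apply the bounded-case bound of stage (ii) to $Z^<:=\sup_t|\sum_j X^<_{j,t}|$ with $b=2u$, observing that $\sup_t\sum_j\esp[(X^<_{j,t})^2]\le\widehat\sigma^2$. Control $Z^>:=\sup_t|\sum_j X^>_{j,t}|$ by the Hoffmann--J{\o}rgensen moment inequality, which expresses $\Lqnorm{Z^>}$ in terms of $\Lqnorm{\max_{j,t}|X^>_{j,t}|}\le\Lqnorm{M}$ and a second-moment contribution $\lesssim\Lnorm{M}$. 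The truncation bias $\sum_j|\esp[X_{j,t}\mathbf{1}_{|X_{j,t}|>u}]|$ is absorbed into an $\sqrt{q}\Lnorm{M}$-term via Cauchy--Schwarz. Choosing $u$ as essentially the $(1/q)$-quantile of the envelope $M$ balances the bounded-case term $2\kappa q u$ against $\kappa q\Lqnorm{M}$.

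The main obstacle is stage (iii): faithfully tracking constants so that the final inequality exhibits the precise coefficients $2,\,2\sqrt{2\kappa q},\,4\sqrt{\kappa q},\,20\kappa q$ claimed in the theorem. Bousquet's inequality is sharp in the bounded regime, but each auxiliary step---the Hoffmann--J{\o}rgensen recursion, the variance split between $X^<$ and $X^>$, and the de-biasing correction---risks doubling constants unless performed with care. In particular, the $4\sqrt{\kappa q}\Lnorm{M}$ term must arise simultaneously from the variance inflation incurred under truncation and from the bias correction, without contaminating the leading $2\sqrt{2\kappa q}\widehat\sigma$ coefficient. This delicate constant-tracking, rather than any conceptual novelty, is the crux of the argument.
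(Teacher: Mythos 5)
First, a point of comparison: the paper does not prove this statement at all --- it is quoted, with these exact constants, as Theorem 15.14 of Boucheron, Lugosi and Massart \cite{boucheron:lugosi:massart2013}, so there is no in-paper argument to match and your attempt must be judged against that reference. Your route is genuinely different from the one used there, and it has a real gap. The proof in \cite{boucheron:lugosi:massart2013} never truncates and never invokes Bousquet's exponential inequality: it works directly with the unbounded summands through the entropy method \emph{for moments}, first establishing $\Lqnorm{(Z-\esp[Z])_+}\le\sqrt{2\kappa q\,\vertiii{\,V^+\,}_{q/2}}$ for the Efron--Stein variance proxy $V^+$ (this is where $\kappa=\sqrt{e}/(2(\sqrt{e}-1))$ originates; it is the constant of that moment recursion, not a Bennett constant obtained by ``exponentiating the tail'' as you assert), then bounding $V^+$ by $\widehat\sigma^2+\sup_{t}\sum_j X_{j,t}^2$, and finally controlling the $q/2$-moments of $\sup_t\sum_j X_{j,t}^2$ by a separate symmetrization and contraction argument whose cross term of the form $\esp[MZ]$ is absorbed into $\esp[Z]$ by AM--GM. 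That chain is what produces the coefficients $2$, $2\sqrt{2\kappa q}$, $4\sqrt{\kappa q}$ and $20\kappa q$.

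Your stages (i)--(ii) are sound in the bounded regime, but stage (iii) is not a proof: you yourself concede that the constant tracking through the Hoffmann--J{\o}rgensen recursion, the truncation bias, and the variance split is unresolved, and that is precisely the content of the theorem, since the statement is a moment bound with specific numerical constants. Concretely, Hoffmann--J{\o}rgensen-type $\mathcal{L}^q$ inequalities carry constants that grow with $q$ too fast to fit inside the single factor $20\kappa q$ unless one uses their sharp forms, and the quantile choice of $u$ couples the three error sources in a way your sketch does not control; there is also no argument showing the $4\sqrt{\kappa q}\Lnorm{M}$ term can simultaneously absorb the variance inflation and the de-biasing correction. If all you need is the qualitative bound $\Lqnorm{Z}\lesssim\esp[Z]+\sqrt{q}\,\widehat\sigma+\sqrt{q}\,\Lnorm{M}+q\Lqnorm{M}$ --- which is in fact all this paper uses downstream, since every constant is eventually swallowed into the $\mathsf{c}_i$ --- then your truncation route can be completed; but as a proof of the theorem as stated it is incomplete, and the economical fix is to follow the moment-method proof of Theorem 15.14 in \cite{boucheron:lugosi:massart2013}.
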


In order to cope with a heavy-tailed $\mathsf{L}(\xi)$ in Assumption \ref{assumption:holder:continuity}, we will need Theorem \ref{thm:panchenko}, a result due to Panchenko (see Theorem 1 in \cite{panchenko2003} or Theorem 12.3 in \cite{boucheron:lugosi:massart2013}). It establishes a sub-Gaussian tail for the deviation of an EP around its mean after a proper \emph{normalization} with respect to a \emph{random} quantity $V$. In our set-up, the standard H\"older continuous assumption turns out to be sufficient to estimate this quantity. 
\begin{theorem}[Panchenko's inequality for self-normalized EPs]\label{thm:panchenko}
Consider a countable family $\mathcal{G}$ of measurable functions $f:\Xi\rightarrow\re$ such that $\probn f(\cdot)^2<\infty$. Let $\{\xi_j\}_{j=1}^N$ and $\{\eta_j\}_{j=1}^N$ be i.i.d. samples of $\probn$ independent of each other. Set 
$$
Y:=\sup_{f\in\mathcal{G}}\sum_{j=1}^Nf(\xi_j),\quad\mbox{ and }\quad V:=\esp\left\{\sup_{f\in\mathcal{G}}\sum_{j=1}^N\left[f(\xi_j)-f(\eta_j)\right]^2\Bigg|\xi_1,\ldots,\xi_N\right\}.
$$
Then there exists an universal constant $\mathsf{c}>0$ such that, for all $t>0$,
$$
\prob\left\{Y-\esp[Y]\ge\mathsf{c}\sqrt{V(1+t)}\right\}\bigvee\prob\left\{Y-\esp[Y]\le -\mathsf{c}\sqrt{V(1+t)}\right\}\le e^{-t}.
$$
\end{theorem}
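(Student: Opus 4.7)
The plan is to prove Panchenko's self-normalized bound by a \emph{moment method}. Concretely, I will first establish, for every integer $q\ge 1$, a moment inequality of the form
\begin{equation*}
\esp\bigl[(Y-\esp Y)_+^{q}\bigr]^{1/q} \;\le\; c_0\sqrt{q}\,\esp\bigl[V^{q/2}\bigr]^{1/q}
\end{equation*}
with a universal $c_0>0$, and then convert it into the desired self-normalized tail estimate via a layered Markov argument with $q\simeq 1+t$. The lower-tail inequality follows from the same argument applied to the family $-\mathcal{G}$, so it is enough to treat the upper tail.

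The first step is a \emph{decoupling}. Introduce $Y':=\sup_{f\in\mathcal{G}}\sum_j f(\eta_j)$. Since $\eta\perp\perp\xi$ and $Y'\stackrel{d}{=}Y$, one has $\esp[Y'\mid\xi_1,\dots,\xi_N]=\esp Y$, so Jensen's inequality yields $\esp[(Y-\esp Y)_+^{q}]\le\esp[(Y-Y')_+^{q}]$ for every $q\ge1$. Countability of $\mathcal{G}$ supplies a measurable near-maximizer $f^{*}=f^{*}(\xi_1,\dots,\xi_N)$ with $Y=\sum_j f^{*}(\xi_j)$; because $f^{*}\in\mathcal{G}$ is a valid trial element in the supremum defining $Y'$,
\begin{equation*}
(Y-Y')_+ \;\le\; \Bigl(\sum_{j=1}^{N}\bigl[f^{*}(\xi_j)-f^{*}(\eta_j)\bigr]\Bigr)_+.
\end{equation*}
Now condition on $\xi:=(\xi_1,\dots,\xi_N)$: the function $f^{*}$ becomes deterministic while $\eta_1,\dots,\eta_N$ remain i.i.d., so the variables $X_j:=f^{*}(\xi_j)-f^{*}(\eta_j)$ are conditionally independent with common centered part $\probn f^{*}-f^{*}(\eta_j)$. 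A Rosenthal--Marcinkiewicz--Zygmund inequality applied to the centered sum yields, for some universal $C>0$ and every $q\ge2$,
\begin{equation*}
\esp\Bigl[(Y-Y')_+^{q}\Bigm|\xi\Bigr] \;\le\; (Cq)^{q/2}\Bigl(\sum_{j}\esp\bigl[X_j^{2}\bigm|\xi\bigr]\Bigr)^{q/2} \;\le\; (Cq)^{q/2}\,V^{q/2},
\end{equation*}
where the last inequality uses that $f^{*}$ is an admissible choice in the supremum defining $V$. Taking total expectations and combining with the Jensen step yields the target moment inequality.

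The final and hardest step is to promote the moment bound into a \emph{self-normalized} tail estimate with the random $\sqrt V$ (rather than $\sqrt{\esp V}$) under the root; a direct Markov step on the moment inequality above only produces the latter. Perform a dyadic truncation in $V$: for a sequence $v_k:=2^{k}v_0$ covering the essential range of $V$, restrict the conditional moment estimate of the previous step to the event $\{v_k/2\le V\le v_k\}$ and apply Markov's inequality with $q\simeq 1+t$ to obtain
\begin{equation*}
\prob\bigl(Y-\esp Y\ge c\sqrt{v_k(1+t)},\ \tfrac{v_k}{2}\le V\le v_k\bigr) \;\le\; e^{-(1+t)}.
\end{equation*}
Summing the geometric series in $k$ and absorbing the factor $\sum_{k}2^{-k/2}$ into a slight enlargement of the constant gives the stated universal $\mathsf{c}$ in $\prob(Y-\esp Y\ge\mathsf{c}\sqrt{V(1+t)})\le e^{-t}$. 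This layered truncation---transmuting a variance-in-mean estimate into a pointwise-in-$V$ bound---is the main obstacle: the decoupling and Rosenthal-type moment inequalities are classical ingredients, but it is the self-normalization step that distinguishes Panchenko's inequality from ordinary Bernstein- or Bennett-type concentration and is precisely what we need in Section~\ref{section:empirical:process:theory:DSSA} to handle heavy-tailed operators satisfying Assumption \ref{assumption:holder:continuity}.
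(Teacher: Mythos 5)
This statement is not proved in the paper: it is imported verbatim from Panchenko \cite{panchenko2003} (Theorem 1; see also Theorem 12.3 of \cite{boucheron:lugosi:massart2013}), so your attempt must be judged against those proofs. Your opening reductions are fine --- the Jensen step $\esp[(Y-\esp Y)_+^q]\le\esp[(Y-Y')_+^q]$ and the pointwise bound $(Y-Y')_+\le\bigl(\sum_j[f^*(\xi_j)-f^*(\eta_j)]\bigr)_+$ with $f^*$ a near-maximizer for the $\xi$-sample --- but the Rosenthal step is genuinely broken. Conditionally on $\xi$ the sum $S:=\sum_j[f^*(\xi_j)-f^*(\eta_j)]$ is \emph{not} centered: its conditional mean is $\sum_jf^*(\xi_j)-N\probn f^*=Y-N\probn f^*\ge Y-\esp[Y]$, i.e.\ essentially the quantity you are trying to bound, and applying a Rosenthal--Marcinkiewicz--Zygmund inequality ``to the centered part'' while asserting $\esp[S_+^q\mid\xi]\le(Cq)^{q/2}V^{q/2}$ silently discards this mean, which makes the argument circular. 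The asserted conditional bound is in fact false: take $\mathcal{G}=\{f\}$ with $\probn f=0$, $\probn f^2=1$, and condition on a realization with $f(\xi_1)=\dots=f(\xi_N)=a$ large; then $\esp[S_+^2\mid\xi]\sim N^2a^2$ while $V=Na^2+N$, so the ratio grows like $N$. (Equivalently: your conditional bound combined with the correct pointwise inequality $(Y-\esp Y)_+^q\le\esp[(Y-Y')_+^q\mid\xi]$ would yield the almost-sure bound $(Y-\esp Y)_+\le C\sqrt{q}\sqrt{V}$ for each fixed $q$, which cannot hold.) A secondary defect in the same step is that Rosenthal's inequality carries a $q$-th-moment term of order $q\bigl(\sum_j\esp[|X_j|^q\mid\xi]\bigr)^{1/q}$ that you drop and that is not dominated by $\sqrt{qV}$ for the heavy-tailed functions this theorem is meant to cover.

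Even granting the moment inequality $\esp[(Y-\esp Y)_+^q]^{1/q}\le c_0\sqrt{q}\,\esp[V^{q/2}]^{1/q}$ (which \emph{is} true, with the Efron--Stein proxy $V^+\le V$, but is proved by the entropy method of Boucheron--Bousquet--Lugosi--Massart, not by decoupling plus Rosenthal), two later steps still fail. The dyadic truncation requires the moment bound to hold \emph{conditionally on the slab} $\{v_k/2\le V\le v_k\}$ --- equivalently conditionally on $\xi$, since $V\in\sigma(\xi_1,\dots,\xi_N)$ --- and that conditional statement is exactly the false inequality above; the global, mean-normalized moment bound does not localize in $V$. Also, the lower tail does not follow by applying the result to $-\mathcal{G}$: that controls $\sup_f(-\sum_jf(\xi_j))=-\inf_f\sum_jf(\xi_j)$, not $-Y$. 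Panchenko's actual proof avoids all of this: it dominates $(Y-\esp Y)_+$ in the convex order by a symmetrized (Rademacher) sum, exploits the conditional sub-Gaussianity of that sum given the data to obtain a square-root tail in the random normalizer directly, and transfers the tail back through a comparison lemma for the convex order; the self-normalization is built in from the start rather than retrofitted onto a mean-normalized moment bound. As it stands, your argument has a genuine gap at its central step.
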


Finally, before proving Lemma \ref{lemma:error:decay:empirical:process}, we will need Theorem \ref{thm:characterization:subgaussian} which is a standard tail characterization of sub-Gaussian random variables. Theorem 2.1 in \cite{boucheron:lugosi:massart2013} gives a proof for the case $\esp[\tilde Y]=0$. The adaptation for the general case is immediate using the facts that $\esp[e^{-t\tilde Y}]\ge e^{-t\esp[\tilde Y]}$ by Jensen's inequality, the integral formula $\esp[\tilde Y]\le\esp[|\tilde Y|]=\int_0^\infty\prob(|\tilde Y|>t)\dist t$ and $\int_0^\infty e^{-\frac{t^2}{2}}\dist t=\sqrt{\frac{\pi}{2}}$.
\begin{theorem}[Tail characterization of sub-Gaussian random variables]\label{thm:characterization:subgaussian}
If $\tilde Y\in\re$ is a random variable such that, for some $v>0$ and for all $t>0$,
$$
\prob\left\{\tilde Y\ge\sqrt{2vt}\right\}\bigvee\prob\left\{\tilde Y\le -\sqrt{2vt}\right\}\le e^{-t},
$$ 
then, for all $t>0$, we have 
$
\ln\esp\left[e^{t\tilde Y}\right]\le e^{\sqrt{\frac{v\pi}{2}}t+8vt^2}.
$
\end{theorem}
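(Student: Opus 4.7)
The plan is to (i) bound the mean $\mu:=\esp[\tilde Y]$ using the tail hypothesis and the integral identity from the hint, and then (ii) reduce to the centered case by applying Theorem~2.1 of \cite{boucheron:lugosi:massart2013} to the shifted variable $W:=\tilde Y-\mu$. First I rewrite the tail hypothesis in its equivalent form via the substitution $s=\sqrt{2vt}$: for every $s\ge 0$,
\[
\prob\{\tilde Y\ge s\}\vee \prob\{\tilde Y\le -s\}\le e^{-s^2/(2v)}.
\]

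For step (i), I use the integral formula from the hint together with the Gaussian identity $\int_0^\infty e^{-u^2/2}\,du=\sqrt{\pi/2}$. The cleanest path is the one-sided variant
\[
\mu\le \int_0^\infty \prob(\tilde Y>s)\,ds \le \int_0^\infty e^{-s^2/(2v)}\,ds = \sqrt{v}\int_0^\infty e^{-u^2/2}\,du = \sqrt{v\pi/2},
\]
which produces exactly the coefficient of $t$ appearing in the target bound, and the same argument applied to $-\tilde Y$ yields the matching $\mu\ge -\sqrt{v\pi/2}$. Jensen's inequality $\esp[e^{-t\tilde Y}]\ge e^{-t\mu}$ (listed in the hint) serves as the companion lower bound that shows this mean control is sharp.

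For step (ii), I decompose
\[
\ln\esp[e^{t\tilde Y}]=t\mu+\ln\esp[e^{t(\tilde Y-\mu)}],
\]
bound the first term by $\sqrt{v\pi/2}\,t$ using step (i), and apply the centered case of the theorem (Theorem~2.1 in \cite{boucheron:lugosi:massart2013}) to $W=\tilde Y-\mu$. The tail condition on $W$ transfers from the one on $\tilde Y$ via $\prob(W>s)=\prob(\tilde Y>s+\mu)\le e^{-(s+\mu)^2/(2v)}$ (and its symmetric analogue for the lower tail), and the a priori bound $|\mu|\le\sqrt{v\pi/2}$ from step (i) absorbs the shift into a universal constant inflation of the variance factor. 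Plugging the resulting centered estimate $\ln\esp[e^{tW}]\le 8vt^2$ (with the constant $8$ chosen large enough to absorb both the BLM constant and the shift inflation) into the decomposition gives
\[
\ln\esp[e^{t\tilde Y}]\le \sqrt{v\pi/2}\,t+8vt^2,
\]
which in turn is dominated by $\exp\{\sqrt{v\pi/2}\,t+8vt^2\}$ as stated.

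The main obstacle is the tail transfer from $\tilde Y$ to the centered $W$ with a variance factor that is \emph{universal} (independent of the underlying distribution of $\tilde Y$), since a naive expansion of $e^{-(s+\mu)^2/(2v)}$ worsens the exponent whenever $\mu<0$. The quantitative bound $|\mu|\le\sqrt{v\pi/2}$ derived in step (i) is precisely what forces the resulting inflation to be a universal multiple of $v$, and that is exactly what permits the explicit absolute constant $8$ in the sub-Gaussian term.
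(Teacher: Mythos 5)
Your overall plan---control $\mu:=\esp[\tilde Y]$ by $\sqrt{v\pi/2}$ via the layer-cake formula and then invoke the centered case of Theorem~2.1 in Boucheron--Lugosi--Massart---is the same as the paper's two-line sketch, and your step (i) is correct (the one-sided version even recovers the stated constant $\sqrt{v\pi/2}$ more cleanly than the paper's two-sided hint). The gap is in step (ii), in the tail transfer to $W:=\tilde Y-\mu$. The inequality $\prob(W>s)=\prob(\tilde Y>s+\mu)\le e^{-(s+\mu)^2/(2v)}$ is not available from the hypothesis when $s+\mu<0$ (and symmetrically for the lower tail when $\mu>0$): the hypothesis only bounds $\prob(\tilde Y\ge u)$ and $\prob(\tilde Y\le -u)$ for $u>0$. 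In the regime $0<s<|\mu|$ all you have a priori is $\prob(W>s)\le 1$, and no inflation of the variance factor turns ``$\le 1$'' into ``$\le e^{-s^2/(2Kv)}$''; the bound $|\mu|\le\sqrt{v\pi/2}$ by itself does not exclude $\prob(\tilde Y>\mu)$ being arbitrarily close to $1$. The transfer is in fact true with a universal $K$, but establishing it requires returning to the mean identity: with $q:=\prob(\tilde Y\ge\mu+s)$ and $\delta:=1-q$ one must combine $\mu\ge(\mu+s)q+\esp\bigl[\tilde Y\unit_{\{\tilde Y<\mu+s\}}\bigr]$ with a layer-cake estimate of the form $\esp[\tilde Y^{-}\unit_A]\lesssim\delta\sqrt{v\ln(e/\delta)}$ for events $A$ of probability at most $\delta$, to force $s\lesssim\delta\sqrt{v\ln(e/\delta)}/q$ and hence $q\le e^{-s^2/(2Kv)}$. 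None of this quantitative interplay between the tail, the mean, and the shift appears in your write-up, so as it stands the ``universal constant inflation'' is an unproved assertion rather than a consequence of $|\mu|\le\sqrt{v\pi/2}$.

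A cleaner repair avoids re-centering altogether. The even-moment bounds $\esp[\tilde Y^{2q}]\le 2\,q!\,(2v)^q$ obtained in BLM Theorem~2.1 follow from the assumed tails alone, with no centering, and the elementary inequality $e^x\le x+e^{x^2}$ (valid for all $x\in\re$) gives $\esp[e^{t\tilde Y}]\le t\mu+\esp[e^{t^2\tilde Y^2}]$, so the mean enters only through the linear term you already control in step (i); summing the moment series then bounds $\esp[e^{t^2\tilde Y^2}]$ by $e^{8vt^2}$ on the relevant range of $t$, and large $t$ can be handled directly from $\esp[e^{t\tilde Y}]\le 1+\int_0^\infty te^{ts}\prob(\tilde Y>s)\,\dist s$. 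This is almost certainly what the paper means by the adaptation being ``immediate,'' and it sidesteps the tail-transfer problem entirely.
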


We now prove the main lemma of this section. It uses Lemma \ref{lemma:lnorm:process} and Theorems \ref{thm:moment:emp:lugosi}-\ref{thm:characterization:subgaussian}.
\begin{lemma}[Local uniform bound for the $\mathcal{L}^p$-norm of empirical error increments]\label{lemma:error:decay:empirical:process}
Consider definition \eqref{equation:expected:valued:objective} and let $\xi^N:=\{\xi_j\}_{j=1}^N$ be an i.i.d. sample from $\probn$. Suppose that Assumption \ref{assumption:holder:continuity} holds and recall definitions \eqref{equation:oracle:error}-\eqref{equation:empirical:mean:operator:&:error}. Given $x_*\in X$ and $R>0$, we define
\begin{equation}
Z:=\sup_{x\in \mathbb{B}[x_*,R]\cap X}\left\Vert\widehat\epsilon(\xi^N,x)-\widehat\epsilon(\xi^N,x_*)\right\Vert.\label{def:empirical:process:statement}
\end{equation}
Then
$$
\Lpnorm{Z}\lesssim\left[\frac{3^\delta\sqrt{d}L_2}{\sqrt{\delta}\left(\sqrt{2}^\delta-1\right)}+\sqrt{p}L_2+pL_p\right]\frac{R^\delta}{\sqrt{N}}.
$$
\end{lemma}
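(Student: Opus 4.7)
The plan is to combine Theorem \ref{thm:moment:emp:lugosi}, which reduces $\Lpnorm{Z}$ to bounds on $\esp[Z]$, a variance-type term and an iid-maximum term, with a chaining argument via Lemma \ref{lemma:lnorm:process} to control the expectation of the supremum; the sub-Gaussian-type MGF bounds on increments required by Lemma \ref{lemma:lnorm:process} will in turn be produced via Panchenko's self-normalized inequality (Theorem \ref{thm:panchenko}) together with Theorem \ref{thm:characterization:subgaussian}.

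First, using $\|v\| = \sup_{u \in S^{d-1}} \langle u, v\rangle$ and passing to countable dense subsets $\mathcal{S} \subset S^{d-1}$ (symmetric under $u \mapsto -u$) and $\mathcal{B} \subset \mathbb{B}[x_*, R]\cap X$, I would rewrite $NZ = \sup_{t \in \mathcal{T}}\sum_{j=1}^N X_{j,t}$ with $\mathcal{T} := \mathcal{S}\times\mathcal{B}$ and $X_{j,(u,x)} := \langle u, \epsilon(\xi_j, x) - \epsilon(\xi_j, x_*)\rangle$. Each $X_{j,t}$ is centered, satisfies $|X_{j,t}| \le 2\mathsf{L}(\xi_j)R^\delta$ by Assumption \ref{assumption:holder:continuity} and Lemma \ref{lemma:holder:continuity:mean:std:dev}, and $\esp[X_{j,t}^2] \le L_2^2 R^{2\delta}$. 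Theorem \ref{thm:moment:emp:lugosi} then gives
$$
\Lpnorm{NZ} \;\le\; 2\esp[NZ] + 2\sqrt{2\kappa p}\,\widehat\sigma + 4\sqrt{\kappa p}\,\Lnorm{M} + 20\kappa p\,\Lpnorm{M},
$$
with $\widehat\sigma^2 \le N L_2^2 R^{2\delta}$, and, via the iid maximum inequality $\Lqnorm{\max_j \mathsf{L}(\xi_j)} \le N^{1/q}L_q$, $\Lnorm{M} \lesssim \sqrt{N}\,L_2 R^\delta$ and $\Lpnorm{M} \lesssim N^{1/p} L_p R^\delta$. Dividing by $N$ and using $N^{1/p-1} \le N^{-1/2}$ for $p \ge 2$, the last three terms contribute $\lesssim (\sqrt{p}\,L_2 + p L_p)\,R^\delta/\sqrt{N}$, matching two of the three summands in the claim.

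The main task is to control $\esp[NZ] \le \Lnorm{\sup_t \tilde Z_t - \tilde Z_{t_0}}$, where $\tilde Z_t := \sum_j X_{j,t}$ and $t_0 := (u_0, x_*)$, via Lemma \ref{lemma:lnorm:process} on $\mathcal{T}$ equipped with a metric such as $d((u,x),(u',x')) := \|x-x'\| \vee R\|u-u'\|^{1/\delta}$. The required MGF bound on increments $\tilde Z_t - \tilde Z_{t'} = \sum_j f_{t,t'}(\xi_j)$, where $f_{t,t'}(\xi) := \langle u,\epsilon(\xi,x)-\epsilon(\xi,x_*)\rangle - \langle u',\epsilon(\xi,x')-\epsilon(\xi,x_*)\rangle$, will come from Panchenko's inequality applied to the singleton class $\{f_{t,t'}\}$: a direct computation gives $V = \sum_j f_{t,t'}(\xi_j)^2 + N\probn[f_{t,t'}^2]$, and the pointwise Hölder decomposition $|f_{t,t'}(\xi)| \le 2\mathsf{L}(\xi)\bigl(\|x-x'\|^\delta + R^\delta\|u-u'\|\bigr)$ combined with Assumption \ref{assumption:holder:continuity} gives $\probn[f_{t,t'}^2] \lesssim L_2^2\,d(t,t')^{2\delta}$. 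Converting Panchenko's self-normalized tail into the MGF estimate \eqref{lemma:lnorm:process:eq0} with parameters $a \propto \sqrt{N}\,L_2$ and $v \propto N L_2^2$ via Theorem \ref{thm:characterization:subgaussian}, and invoking the product metric-entropy bound $H(\theta,\mathcal{T}) \lesssim d\ln(R/\theta)$ from Lemma \ref{lemma:entropy}, Lemma \ref{lemma:lnorm:process} delivers $\esp[NZ] \lesssim \tfrac{3^\delta \sqrt{dN}\,L_2 R^\delta}{\sqrt{\delta}(\sqrt{2}^\delta - 1)}$, where the factor $\sqrt{d/\delta}$ emerges from evaluating the chaining sum $\sum_i 2^{-i\delta}\sqrt{d i}$. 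Dividing by $N$ then completes the argument.

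The principal obstacle is the passage from Panchenko's \emph{self-normalized} tail, whose scale $V$ is a random function of $\xi^N$, to a \emph{deterministic} sub-Gaussian MGF bound on the scalar increments, which is what Lemma \ref{lemma:lnorm:process} requires. Resolving it will rely on controlling higher moments of $V$ through the iid structure and the pointwise Hölder bound on $f_{t,t'}$, together with a conditioning argument decoupling the randomness in $V$ from the randomness driving $Y$.
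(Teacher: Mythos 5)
Your skeleton matches the paper's proof almost exactly: rewrite $Z$ as the supremum of an empirical process indexed by directions times points of the ball, apply Theorem \ref{thm:moment:emp:lugosi} to reduce $\Lpnorm{Z}$ to $\esp[Z]$, $\widehat\sigma$ and $M$, bound $\widehat\sigma$ and $M$ directly from the H\"older property, and control $\esp[Z]$ by chaining (Lemma \ref{lemma:lnorm:process}) with increment tails supplied by Panchenko's inequality on singleton classes. The $\widehat\sigma$ and $M$ computations are fine. But the one step you flag as ``the principal obstacle'' is precisely where your plan breaks, and the resolution you sketch does not work. Under Assumption \ref{assumption:holder:continuity} the variable $\mathsf{L}(\xi)$ is only assumed to have a finite $2p$-th moment, so the increment $f_{t,t'}(\xi)$, which is bounded by a multiple of $\mathsf{L}(\xi)\dist(t,t')^\delta$, is genuinely heavy-tailed: its moment generating function can be infinite for every $\lambda>0$. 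Hence no deterministic sub-Gaussian bound of the form \eqref{lemma:lnorm:process:eq0} with $a\propto\sqrt{N}L_2$, $v\propto NL_2^2$ can hold for the unnormalized increments $\tilde Z_t-\tilde Z_{t'}$, and ``controlling higher moments of $V$'' cannot produce one. Moreover, there is no independence to exploit by conditioning: in this application $V$ is (after taking the conditional expectation over the ghost sample) a function of the same sample $\xi^N$ that drives $Y$, so a ``conditioning argument decoupling the randomness in $V$ from the randomness driving $Y$'' has nothing to decouple.

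The paper's way out is to never deduce a deterministic MGF bound for $\tilde Z_t-\tilde Z_{t'}$ at all. One bounds $V\le \mathrm{const}\cdot R^{2\delta}\dist(t,t')^{2\delta}W_N^2/N$ with $W_N^2:=N^{-1}\sum_j\mathsf{L}(\xi_j)^2+\Lnorm{\mathsf{L}(\xi)}^2+4L^2$, so that Panchenko's tail, once divided by $W_N$, becomes a \emph{deterministic} sub-Gaussian tail for the \emph{self-normalized} increments $(\tilde Z_t-\tilde Z_{t'})/W_N$. Theorem \ref{thm:characterization:subgaussian} and Lemma \ref{lemma:lnorm:process} are then applied to the normalized process $Z_t:=\tilde Z_t/W_N$, and only at the very end does one multiply back, via H\"older's inequality, $\esp\bigl[\sup_t|\tilde Z_t|\bigr]=\esp\bigl[\sup_t|Z_t|\cdot W_N\bigr]\le\Lnorm{\sup_t|Z_t|}\cdot\Lnorm{W_N}$ with $\Lnorm{W_N}\lesssim L_2$. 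This is the ``simple decoupling argument based on H\"older's inequality'' the paper alludes to, and it is the missing idea in your proposal; without it the chaining step cannot be carried out for heavy-tailed operators.
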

\begin{proof}
A first step is to rewrite $Z$ as  the supremum of a suitable EP and  use Theorem \ref{thm:moment:emp:lugosi}. In the following, we define the set $\mathbb{B}_X:=\{u\in\mathbb{B}:x_*+Ru\in X\}$ for $x_*\in X$ and $R>0$ as stated in the theorem. Note that
\begin{eqnarray}
Z&=&\sup_{u\in \mathbb{B}_X}\frac{1}{N}\left\Vert\sum_{j=1}^N\epsilon(\xi_j,x_*+Ru)-\epsilon(\xi_j,x_*)\right\Vert\nonumber\\
&=&\sup_{u\in \mathbb{B}_X}\frac{1}{N}\sup_{y\in \mathbb{B}}\left\langle\sum_{j=1}^N\epsilon(\xi_j,x_*+Ru)
-\epsilon(\xi_j,x_*),y\right\rangle\label{def:empirical:process}\\
&=&\sup_{(u,y)\in \mathbb{B}_X\times\mathbb{B}}\frac{1}{N}\sum_{j=1}^N\left\langle\epsilon(\xi_j,x_*+Ru)-\epsilon(\xi_j,x_*),y\right\rangle,\nonumber
\end{eqnarray} 
where the second equality uses the fact that $\Vert\cdot\Vert=\sup_{y\in \mathbb{B}}\langle y,\cdot\rangle$. Next, we define the index set $\mathcal{T}:=\mathbb{B}_X\times\mathbb{B}$ and, for every $j\in[N]$ and $t:=(u,y)\in\mathbb{B}_X\times\mathbb{B}$, we define the random variables 
\begin{eqnarray}
X_{j,t}&:=&\frac{1}{N}\left\langle\epsilon(\xi_j,x_*+Ru)-
\epsilon(\xi_j,x_*),y\right\rangle,\label{def:empirical:process:xjt}\\
\tilde Z_t&:=&\sum_{j=1}^NX_{j,t}.\label{def:empirical:process:zt}
\end{eqnarray}
From Assumption \ref{assumption:holder:continuity}, it is not difficult to show that, for every $j\in[N]$, the process $\mathcal{T}\ni t\mapsto X_{j,t}$ is H\"older continuous with respect to the metric 
\begin{equation}\label{equation:metric:process}
\dist(t,t'):=\Vert u-u'\Vert+\Vert y-y'\Vert.
\end{equation}
This fact, the separability of $\mathcal{T}$ and \eqref{def:empirical:process}, imply that $(\tilde Z_t)_{t\in\mathcal{T}}$ is a continuous process and $Z=\sup_{t\in\mathcal{T}_0}\tilde Z_t=\sup_{t\in\mathcal{T}_0}\left|\tilde Z_t\right|$ is measurable, where $\mathcal{T}_0$ is a dense countable subset of $\mathcal{T}$. Hence, we may assume next that $\mathcal{T}$ is countable without loss on generality. Our next objective is to use Theorem \ref{thm:moment:emp:lugosi}, bounding $\Lpnorm{Z}$ in terms of $\esp[Z]$, $M$ and $\widehat\sigma^2$.

\textsf{\textbf{PART 1} (An upper bound on $\esp[Z]$)}: To bound $\esp[Z]$ we will need Lemma \ref{lemma:lnorm:process} and Theorems \ref{thm:panchenko}-\ref{thm:characterization:subgaussian}. At this point, let's fix $t=(u,y)\in\mathcal{T}$ and $t'=(u',y')\in\mathcal{T}$ and define the measurable function
$$
f(\cdot):=\frac{1}{N}\left\langle\epsilon(\cdot,x_*+Ru)-
\epsilon(\cdot,x_*),y\right\rangle-\frac{1}{N}\left\langle\epsilon(\cdot,x_*+Ru')-
\epsilon(\cdot,x_*),y'\right\rangle.
$$
We have that $\probn f(\cdot)^2<\infty$ since $\Lnorm{\Vert F(\xi,\cdot)\Vert}<\infty$ on $X$ (Assumption \ref{assumption:holder:continuity}). By construction and \eqref{def:empirical:process:xjt}-\eqref{def:empirical:process:zt}, we have $f(\xi_j)=X_{j,t}-X_{j,t'}$ for all $j\in[N]$ and $\tilde Z_t-\tilde Z_{t'}=\sum_{j=1}^Nf(\xi_j)$. Note also that $\esp\left[\sum_{j=1}^Nf(\xi_j)\right]=0$, using \eqref{equation:expected:valued:objective}, \eqref{equation:oracle:error} and that $\{\xi_j\}_{j\in[N]}$ is an i.i.d. sample of $\probn$. 

The previous observations allow us to claim Theorem \ref{thm:panchenko} with $\mathcal{G}:=\{f\}$ and $Y:=\sum_{j=1}^Nf(\xi_j)$. Precisely, if $\{\eta_j\}_{j=1}^N$ is an i.i.d. sample from $\probn$ which is independent of $\{\xi_j\}_{j=1}^N$, then Theorem \ref{thm:panchenko} and $\esp\left[\sum_{j=1}^Nf(\xi_j)\right]=0$ imply that, for all $\lambda>0$,
\begin{equation}\label{thm:moment:inequality:ez:eq1}
\prob\left\{\sum_{j=1}^Nf(\xi_j)\ge \mathsf{c}\sqrt{V(1+\lambda)}\right\}\bigvee\prob\left\{\sum_{j=1}^Nf(\xi_j)\le-\mathsf{c}\sqrt{V(1+\lambda)}\right\}\le e^{-\lambda},
\end{equation}
for some universal constant $\mathsf{c}>0$ and
$$
V:=\esp\left[\sum_{j=1}^N\left[f(\xi_j)-f(\eta_j)\right]^2\Bigg|\xi_1,\ldots,\xi_N\right].
$$

We will now give an upper bound on $V$. Given $\xi\in\Xi$, \eqref{equation:expected:valued:objective},  \eqref{equation:oracle:error} and H\"older continuity of $F(\xi,\cdot)$ and $T$ (Assumption \ref{assumption:holder:continuity} and Lemma \ref{lemma:holder:continuity:mean:std:dev}) imply that $\epsilon(\xi,\cdot)$ is $(\mathsf{L}(\xi)+L, \delta)$-H\"older continuous on $X$. This, definition of $f$ and the facts that $y,y,u,u'\in\mathbb{B}$ and $x_*+Ru,x_*+Ru'\in X$ imply that, for all $j\in[N]$ and $\Delta f_j:=N\left|[f(\xi_j)-f(\eta_j)]\right|$,
\begin{eqnarray*}
\Delta f_j&\le &\left|\left\langle\epsilon(\xi_j,x_*+Ru)-\epsilon(\xi_j,x_*)-\epsilon(\eta_j,x_*+Ru)+\epsilon(\eta_j,x_*),y-y'\right\rangle\right|\\
&+&\left|\left\langle\epsilon(\xi_j,x_*+Ru)-\epsilon(\xi_j,x_*+Ru')-\epsilon(\eta_j,x_*+Ru)+\epsilon(\eta_j,x_*+Ru'),y'\right\rangle\right|\\
&\le &\left[\mathsf{L}(\xi_j)+\mathsf{L}(\eta_j)+2L\right]R^\delta\left[\Vert y-y'\Vert+\Vert u-u'\Vert^\delta\right]\\
&\le &\left[\mathsf{L}(\xi_j)+\mathsf{L}(\eta_j)+2L\right]R^\delta2^{1-\delta}\left[\Vert y-y'\Vert^{\frac{1}{\delta}}+\Vert u-u'\Vert\right]^\delta\\
&\le &\left[\mathsf{L}(\xi_j)+\mathsf{L}(\eta_j)+2L\right]R^\delta2^{(1-\delta)}\left[\Vert y-y'\Vert+\Vert u-u'\Vert\right]^\delta,
\end{eqnarray*}
where we used concavity of $\re_+\ni x\mapsto x^\delta$ in third inequality and the fact that $\Vert y-y'\Vert^{\frac{1}{\delta}}\le2^{\frac{(1-\delta)}{\delta}}\Vert y-y'\Vert$ for $y,y'\in\mathbb{B}$ in last inequality. We take squares in the above inequality, use relation $(\sum_{i=1}^3a_i)^2\le3\sum_{i=1}^3a_i^2$ and definitions of $V$ and \eqref{equation:metric:process}. We thus obtain 
\begin{eqnarray}
V&\le &\frac{3\cdot4^{1-\delta}R^{2\delta}\dist(t,t')^{2\delta}}{N}\left\{\sum_{j=1}^N\frac{\mathsf{L}(\xi_j)^2}{N}+\sum_{j=1}^N\frac{\esp\left[\mathsf{L}(\eta_j)^2|\xi_1,\ldots,\xi_N\right]}{N}+4L^2\right\}\nonumber\\
&=&\frac{3\cdot4^{1-\delta}R^{2\delta}\dist(t,t')^{2\delta}W_N^2}{N},
\label{thm:moment:inequality:ez:eq2}
\end{eqnarray}
where we have defined 
\begin{equation}\label{thm:moment:inequality:ez:WN}
W_N:=\sqrt{\frac{1}{N}\sum_{j=1}^N\mathsf{L}(\xi_j)^2+\Lnorm{\mathsf{L}(\xi)}^2+4L^2},
\end{equation}
and used that $\{\eta_j\}_{j\in[N]}$ is an i.i.d. sample of $\probn$ independent of $\{\xi_j\}_{j\in[N]}$.

Set $\tilde Y:=\frac{\tilde Z_t-\tilde Z_{t'}}{W_N}-\frac{\sqrt{3}\mathsf{c}2^{1-\delta}R^{\delta}\dist(t,t')^{\delta}}{\sqrt{N}}$. Relations \eqref{thm:moment:inequality:ez:eq1}-\eqref{thm:moment:inequality:ez:eq2} and $\sum_{j=1}^Nf(\xi_j)=\tilde Z_t-\tilde Z_{t'}$, together with $\sqrt{1+\lambda}\le1+\sqrt{\lambda}$ for $\lambda>0$, imply that 
\begin{equation*}
\prob\left\{\tilde Y\ge\frac{\sqrt{3}\mathsf{c}2^{1-\delta}R^{\delta}\dist(t,t')^{\delta}}{\sqrt{N}}\sqrt{\lambda}\right\}\bigvee\prob\left\{\tilde Y\le-\frac{\sqrt{3}\mathsf{c}2^{1-\delta}R^{\delta}\dist(t,t')^{\delta}}{\sqrt{N}}\sqrt{\lambda}\right\}\le e^{-\lambda}.
\end{equation*}
The above relation and Theorem \ref{thm:characterization:subgaussian} imply that for some universal constants $C_1,C_2>0$ and for all $\lambda>0$,
\begin{equation}\label{thm:moment:inequality:ez:eq4}
\ln\esp\left[\exp\left\{\frac{(\tilde Z_t-\tilde Z_{t'})}{W_N}\lambda\right\}\right]\le\frac{C_12^{1-\delta}R^{\delta}\dist(t,t')^{\delta}}{\sqrt{N}}\lambda+\frac{C_2^24^{1-\delta}R^{2\delta}\dist(t,t')^{2\delta}}{2N}\lambda^2.
\end{equation}

We now observe that \eqref{thm:moment:inequality:ez:eq4} holds for any $t,t'\in\mathcal{T}$. Inequality \eqref{thm:moment:inequality:ez:eq4} and Lemma \ref{lemma:lnorm:process} with $(\mathcal{T},\dist)$ as defined in \eqref{equation:metric:process}, the continuous process $\mathcal{T}\ni t\mapsto Z_t:=\frac{\tilde Z_t}{W_N}$, $t_0:=(0,0)$, $\theta:=\sup_{t\in\mathcal{T}}\dist(t,0)\le2$, $a:=\frac{C_1 2^{1-\delta}R^{\delta}}{\sqrt{N}}$ and $v:=\frac{C_2^2 4^{1-\delta}R^{2\delta}}{N}$ imply that
\begin{equation}\label{thm:moment:inequality:ez:eq4'}
\Lnorm{\sup_{t\in\mathcal{T}}Z_t}\le
\frac{\sqrt{2}C 2^{1-\delta}(6R)^\delta}{\sqrt{N}}\left[\frac{1}{2^\delta-1}+\sum_{i=1}^\infty\frac{\sqrt[4]{8H\left(2^{-i+1},\mathcal{T}\right)}+2\sqrt{H\left(2^{-i+1},\mathcal{T}\right)}}{2^{i\delta}}\right],
\end{equation}
where we defined $C=\sqrt{C_1^2+C_2^2}$ and used the fact that $Z_{t_0}=\frac{\tilde Z_{t_0}}{W_N}=0$. From Lemma \ref{lemma:entropy} and the fact that, for any $\theta>0$, $H(\theta,\mathbb{B}_X\times\mathbb{B})\le H(\theta,\mathbb{B}_X)+H(\theta,\mathbb{B})\le2H(\theta,\mathbb{B})$, we also have that 
\begin{eqnarray}
\sum_{i=1}^\infty\frac{\sqrt[4]{8H\left(2^{-i+1},\mathcal{T}\right)}+2\sqrt{H\left(2^{-i+1},\mathcal{T}\right)}}{2^{i\delta}}&\lesssim &\sqrt{d}\sum_{i=1}^\infty\frac{\sqrt{\ln(1+2^{i+1})}}{2^{i\delta}}\nonumber\\
&\lesssim &\sqrt{d}\sum_{i=1}^\infty\frac{\sqrt{i+1}}{2^{i\delta}}\lesssim
\frac{\sqrt{d/\delta}}{2^{\frac{\delta}{2}}-1},
\end{eqnarray}
where we used the facts that $\ln(1+x)\le x$, $\sqrt{i+1}\le\frac{2^{\frac{i\delta}{2}}}{\sqrt{\delta}\ln 2}$ and\footnote{The previous fact can be derived from the inequality $2^x\ge1+(\ln2)x$.} $\sum_{i=1}^\infty2^{-\frac{i\delta}{2}}=\frac{1}{2^{\frac{\delta}{2}}-1}$.

H\"older's inequality implies that 
\begin{equation}\label{thm:moment:inequality:ez:eq5}
\esp[Z]=\esp\left[\sup_{t\in\mathcal{T}}|\tilde Z_t|\right]
=\esp\left[\sup_{t\in\mathcal{T}}\left|Z_t\right|\cdot W_N\right]
\le\Lnorm{\sup_{t\in\mathcal{T}}\left|Z_t\right|}\cdot\Lnorm{W_N}.
\end{equation}
Since $\{\xi_j\}_{j\in[N]}$ is an i.i.d. sample from $\probn$, we also obtain from \eqref{thm:moment:inequality:ez:WN} that $\Lnorm{W_N}\lesssim \Lnorm{\mathsf{L}(\xi)}+L=L_2$. Finally, this, relations \eqref{thm:moment:inequality:ez:eq4'}-\eqref{thm:moment:inequality:ez:eq5} and the facts that $2^{1-\delta} 6^\delta=2\cdot3^\delta$ and $2^{\delta}-1\ge2^{\frac{\delta}{2}}-1$ imply that
\begin{eqnarray}
\esp[Z]\lesssim\frac{\sqrt{d}(3R)^\delta L_2}{\left(2^{\frac{\delta}{2}}-1\right)\sqrt{\delta N}}.\label{lemma:error:decay:empirical:process:EZ}
\end{eqnarray}
	
\textsf{\textbf{PART 2} (An upper bound on $M$ and $\widehat\sigma^2$)}: From the definition of $\widehat\sigma^2$ in Theorem \ref{thm:moment:emp:lugosi} and \eqref{def:empirical:process:xjt}, we get
\begin{eqnarray}\label{lemma:error:decay:empirical:process:sigma}
\widehat\sigma &=&\sqrt{\sup_{(u,y)\in\mathcal{T}}\frac{1}{N^2}\sum_{j=1}^N\esp\left[\left\langle\epsilon(\xi_j,x_*+Ru)-\epsilon(\xi_j,x_*),y\right\rangle^2\right]}\nonumber\\
&\le &\sqrt{\frac{1}{N}\sup_{(u,y)\in\mathcal{T}}\esp\left[\sum_{j=1}^N\frac{(\mathsf{L}(\xi_j)+L)^2}{N}R^{2\delta}\Vert u\Vert^{2\delta}\Vert y\Vert^2\right]}\nonumber\\
&\le &\frac{R^{\delta}(\Lnorm{\mathsf{L}(\xi)}+L)}{\sqrt{N}},
\end{eqnarray}
where we used the fact that $\Vert\epsilon(\xi_j,x_*+Ru)-\epsilon(\xi_j,x_*)\Vert\le[\mathsf{L}(\xi_j)+L]R^\delta$ for $u\in\mathbb{B}_X$ (Assumption \ref{assumption:holder:continuity} and Lemma \ref{lemma:holder:continuity:mean:std:dev}) in first inequality and the fact that $\{\xi_j\}_{j\in[N]}$ is an i.i.d. sample of $\probn$ in the last inequality.

From the definition of $M$ in Theorem \ref{thm:moment:emp:lugosi} and \eqref{def:empirical:process:xjt}, we get
\begin{eqnarray*}
\Lpnorm{M}^p&=&\esp\left[\left(\max_{j\in[N]}\sup_{t\in\mathcal{T}}|X_{j,t}|\right)^p\right]
=\esp\left[\max_{j\in[N]}\sup_{t\in\mathcal{T}}|X_{j,t}|^p\right]\nonumber\\
&\le &\frac{1}{N^p}\sum_{j=1}^N\esp\left[\sup_{t\in\mathcal{T}}\left|\left\langle\epsilon(\xi_j,x_*+Ru)
-\epsilon(\xi_j,x_*),y\right\rangle\right|^p\right]\nonumber\\
&\le &\frac{1}{N^{p-1}}\sup_{(u,y)\in\mathcal{T}}\esp\left[\sum_{j=1}^N\frac{(\mathsf{L}(\xi_j)+L)^p}{N}R^{p\delta}\Vert u\Vert^{p\delta}\Vert y\Vert^p\right]\nonumber\\
&\le &\frac{R^{p\delta}\Lpnorm{\mathsf{L}(\xi)+L}^p}{N^{p-1}},
\end{eqnarray*}
where, again, we used the fact that $\Vert\epsilon(\xi_j,x_*+Ru)-\epsilon(\xi_j,x_*)\Vert\le[\mathsf{L}(\xi_j)+L]R^\delta$ for $u\in\mathbb{B}_X$ in second inequality and the fact that $\{\xi_j\}_{j\in[N]}$ is an i.i.d. sample of $\probn$ in the last inequality. We take the $p$-th root in the above inequality and note that for $p\ge2$ we have $N^{\frac{p-1}{p}}\ge \sqrt{N}$, obtaining
\begin{equation}\label{lemma:error:decay:empirical:process:m}
\Lpnorm{M}\le\frac{(\Lpnorm{\mathsf{L}(\xi)}+L) R^{\delta}}{\sqrt{N}}.
\end{equation}

From \eqref{lemma:error:decay:empirical:process:EZ}-\eqref{lemma:error:decay:empirical:process:m} and definitions of $L_2$ and $L_p$ in Assumption \ref{assumption:holder:continuity}, we obtain the required claim.
\end{proof}

\subsection{The proof of Theorem \ref{thm:variance:error:with:line:search}}\label{section:proof:theorem}

With the theory developed in Sections \ref{section:L2:norm}-\ref{section:Lp:norm}, we are now ready to prove Theorem \ref{thm:variance:error:with:line:search}. We shall use Lemma \ref{lemma:error:decay:empirical:process} and follow the ideas of items (i)-(iii) presented in the introduction of Section \ref{section:empirical:process:theory:DSSA}. We will also need the next Lemma \ref{lemma:decay:empirical:error} which controls the oracle's empirical error. Its control is easier than the oracle's correlated error, since it defines a martingale difference. Its proof uses Assumption \ref{assumption:holder:continuity} and a version of Burkholder-Davis-Gundy's inequality in Hilbert spaces (see \cite{burkholder:davis:gundy1972,marinelli:rockner2016}). 

\begin{theorem}[Burkholder-Davis-Gundy inequality in $\re^d$]\label{thm:BDG}
Let $\Vert\cdot\Vert$ be the Euclidean norm in $\re^d$. Then, for all $q\ge2$, there exists $C_q>0$ such that for any vector-valued martingale $\{y_j\}_{j=0}^N$ adapted to the filtration $\{\mathcal{G}_j\}_{j=1}^N$ with $y_0=0$, it holds that 
$$
\Lqnorm{\sup_{j\leq N}\Vert y_j\Vert}\leq C_q\,\Lqnorm{\sqrt{\sum_{j=1}^N\Vert y_j-y_{j-1}\Vert^2}}\le C_q\,\sqrt{\sum_{j=1}^N\,\Lqnorm{\Vert y_j-y_{j-1}\Vert}^2}.
$$
\end{theorem}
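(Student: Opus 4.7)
The plan is to prove the two inequalities separately, setting $d_j := y_j - y_{j-1}$ throughout. The rightmost inequality follows from Minkowski's inequality applied in $\mathcal{L}^{q/2}$: since $q \ge 2$ gives $q/2 \ge 1$, for any nonnegative random variables $\{a_j\}$ one has $\esp\bigl[(\sum_j a_j)^{q/2}\bigr]^{2/q} \le \sum_j \esp\bigl[a_j^{q/2}\bigr]^{2/q}$; applying this to $a_j := \Vert d_j\Vert^2$ and taking square roots yields the claim.

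The leftmost inequality is the substantive part. My approach would be to reduce it to the classical scalar Burkholder-Davis-Gundy inequality \cite{burkholder:davis:gundy1972}, which asserts that any real-valued martingale $\{M_j\}$ with $M_0=0$ satisfies $\Lqnorm{\sup_{j\le N}|M_j|} \le c_q \Lqnorm{\sqrt{\sum_{j=1}^N |M_j-M_{j-1}|^2}}$. Writing $y_j=(y_j^{(1)},\dots,y_j^{(d)})$ in coordinates, each $\{y_j^{(k)}\}_{j=0}^N$ is a scalar $\mathcal{G}_j$-martingale starting from zero, with $|y_j^{(k)}-y_{j-1}^{(k)}| \le \Vert d_j\Vert$. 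Using the identity $\Vert y_j\Vert^2 = \sum_k (y_j^{(k)})^2$ and the monotone bound $\sup_j \Vert y_j\Vert^2 \le \sum_k \sup_j |y_j^{(k)}|^2$, I would assemble the coordinate-wise scalar BDG estimates by another application of Minkowski in $\mathcal{L}^{q/2}$ to obtain an $\mathcal{L}^q$-bound for $\sup_j \Vert y_j\Vert$ in terms of the coordinate square functions.

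The main obstacle is that this naive coordinate reduction produces a constant of the form $c_q\sqrt{d}$, whereas the stated $C_q$ must depend only on $q$. The dimensional loss arises when one interchanges the sum over $k$ with the $\mathcal{L}^{q/2}$-norm while recombining the coordinate square functions $\sum_j |y_j^{(k)}-y_{j-1}^{(k)}|^2$ into the Hilbert-space square function $\sum_j \Vert d_j\Vert^2$, costing a factor $\sqrt{d}$. To eliminate this dependence, my plan is to work intrinsically in the Hilbert space $(\re^d,\Vert\cdot\Vert)$: the orthogonality $\esp[\langle d_j, y_{j-1}\rangle\,|\,\mathcal{G}_{j-1}]=0$ implies that $\Vert y_j\Vert^2 - \sum_{i\le j}\Vert d_i\Vert^2$ is itself a $\mathcal{G}_j$-martingale. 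This quadratic-variation identity immediately yields the $\mathcal{L}^2$ case via Doob's maximal inequality, and the passage from $q=2$ to general $q \ge 2$ can be carried out by Burkholder's good-$\lambda$ method applied to the submartingale $\Vert y_j\Vert$ and its square function $(\sum_{i\le j}\Vert d_i\Vert^2)^{1/2}$; the scalar argument in \cite{burkholder:davis:gundy1972} depends only on the submartingale property of $|M_j|$ and the above quadratic-variation identity, both of which hold verbatim for $\Vert y_j\Vert$ in any Hilbert space. Alternatively, one may invoke directly the dimension-free Hilbert-space BDG inequality established in \cite{marinelli:rockner2016} to conclude with a constant depending only on $q$.
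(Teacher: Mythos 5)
Your proposal is correct, but it is worth noting that the paper does not prove this statement at all: Theorem \ref{thm:BDG} is quoted as a known result from \cite{burkholder:davis:gundy1972,marinelli:rockner2016}, so your fallback option of ``invoke directly the dimension-free Hilbert-space BDG inequality established in \cite{marinelli:rockner2016}'' is precisely what the paper does for the first inequality, while the second inequality is indeed just Minkowski in $\mathcal{L}^{q/2}$ exactly as you argue. What your write-up adds beyond the paper is (a) an explicit justification of the Minkowski step, which is the only part of the displayed chain not literally contained in the standard BDG statement, and (b) a correct diagnosis of why the naive coordinate-wise reduction to scalar BDG is inadequate here: the resulting $c_q\sqrt{d}$ would propagate into the constants $C_p$, $C_{2p}$ of Lemma \ref{lemma:decay:empirical:error} and hence into the complexity estimates, where the paper is careful to track dimension dependence separately (it enters only through the metric entropy term $\sqrt{d}$ in Lemma \ref{lemma:error:decay:empirical:process}). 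Your intrinsic route --- the orthogonality identity making $\Vert y_j\Vert^2-\sum_{i\le j}\Vert d_i\Vert^2$ a martingale, Doob for $q=2$, and Burkholder's good-$\lambda$ argument for general $q$ applied to the nonnegative submartingale $\Vert y_j\Vert$ --- is the standard dimension-free proof and is sound, though the good-$\lambda$ step is only sketched; since you also offer the citation as an alternative, nothing essential is missing.
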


\begin{lemma}[Local bound for the $\mathcal{L}^q$-norm of the empirical error]\label{lemma:decay:empirical:error}
Consider definition \eqref{equation:expected:valued:objective} and let $\xi^N:=\{\xi_j\}_{j=1}^N$ be an i.i.d. sample from $\probn$. Suppose that Assumption \ref{assumption:holder:continuity} holds and take $q\in[p,2p]$ such that the integrability conditions of Assumption \ref{assumption:holder:continuity} are satisfied. Recall definitions in \eqref{equation:oracle:error}-\eqref{equation:empirical:mean:operator:&:error} and definition of $C_q$ in Theorem \ref{thm:BDG}. Set $C_2:=1$ if $q=p=2$. Then, for any $x,x_*\in X$, 
$$
\Lqnorm{\left\Vert\widehat\epsilon(\xi^N,x)\right\Vert}\le C_q\frac{\sigma_q(x^*)
+L_q\Vert x-x^*\Vert^\delta}{\sqrt{N}}.
$$
\end{lemma}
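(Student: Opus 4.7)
The plan is to treat the empirical error as a normalized martingale sum and apply the Burkholder--Davis--Gundy inequality, then use H\"older continuity of $\sigma_q(\cdot)$ to anchor the bound at the reference point $x^*$.

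First I would set $y_j:=\sum_{i=1}^{j}\epsilon(\xi_i,x)$ for $j\in\{0,1,\ldots,N\}$, with $y_0=0$, and let $\mathcal{G}_j:=\sigma(\xi_1,\ldots,\xi_j)$. By \eqref{equation:expected:valued:objective} and \eqref{equation:oracle:error}, each $\epsilon(\xi_i,x)$ has mean zero, and by i.i.d. sampling $\epsilon(\xi_i,x)$ is independent of $\mathcal{G}_{i-1}$; hence $\{y_j\}$ is an $\mathbb{R}^d$-valued martingale adapted to $\{\mathcal{G}_j\}$. Observing that $\widehat{\epsilon}(\xi^N,x)=N^{-1}y_N$, Theorem~\ref{thm:BDG} gives
\begin{equation*}
\Lqnorm{\|\widehat{\epsilon}(\xi^N,x)\|}
\;\le\;\frac{C_q}{N}\sqrt{\sum_{j=1}^{N}\Lqnorm{\|\epsilon(\xi_j,x)\|}^2}.
\end{equation*}
Since the sample is i.i.d., $\Lqnorm{\|\epsilon(\xi_j,x)\|}=\sigma_q(x)$ for every $j$ by \eqref{equation:oracle:error:variance}, which collapses the sum and yields
\begin{equation*}
\Lqnorm{\|\widehat{\epsilon}(\xi^N,x)\|}\;\le\;\frac{C_q\,\sigma_q(x)}{\sqrt{N}}.
\end{equation*}
For the edge case $q=p=2$, one does not even need BDG: independence and zero mean give $\esp\|\widehat{\epsilon}(\xi^N,x)\|^2=N^{-1}\sigma_2(x)^2$, which is exactly the above with $C_2=1$.

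To reach the desired bound centered at $x^*$, I would invoke Lemma~\ref{lemma:holder:continuity:mean:std:dev}, whose integrability hypotheses are satisfied for our choice of $q\in[p,2p]$: it states that $\sigma_q(\cdot)$ is $(L_q,\delta)$-H\"older continuous on $X$, so
\begin{equation*}
\sigma_q(x)\;\le\;\sigma_q(x^*)+L_q\,\|x-x^*\|^{\delta}.
\end{equation*}
Plugging this into the previous display produces
\begin{equation*}
\Lqnorm{\|\widehat{\epsilon}(\xi^N,x)\|}\;\le\;C_q\,\frac{\sigma_q(x^*)+L_q\|x-x^*\|^{\delta}}{\sqrt{N}},
\end{equation*}
as required. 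There is essentially no obstacle here: the only subtle points are (i) verifying that BDG applies in the vector-valued setting, which is covered by Theorem~\ref{thm:BDG}, and (ii) making sure the constant $C_q$ is replaced by $1$ in the $q=2$ case, which follows from the direct variance computation above rather than from the general BDG inequality.
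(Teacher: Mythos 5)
Your proposal is correct and follows essentially the same route as the paper: both construct the $\re^d$-valued martingale of partial sums of $\epsilon(\xi_j,x)$, apply the vector-valued Burkholder--Davis--Gundy inequality of Theorem~\ref{thm:BDG}, invoke the $(L_q,\delta)$-H\"older continuity of $\sigma_q(\cdot)$ from Lemma~\ref{lemma:holder:continuity:mean:std:dev} to anchor at $x^*$, and handle $q=2$ separately via the exact variance identity to get $C_2=1$. The only cosmetic difference is that you apply the H\"older bound after collapsing the BDG sum rather than to the individual increments beforehand, which changes nothing.
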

\begin{proof}
We define the $\re^d$-valued process $\{y_t\}_{t=0}^N$ by $y_0=0$ and $y_t:=\sum_{j=1}^t\frac{\epsilon(\xi_j,x)}{N}$ for $t\in[N]$ and the filtration $\mathcal{G}_t:=\sigma(y_0,\ldots,y_t)$ for $t\in\{0\}\cup[N]$. Since $\{\xi_j\}_{j=1}^N$ is an i.i.d. sample of $\probn$, $\{y_t,\mathcal{G}_t\}_{t=0}^N$ is a $\re^d$-valued martingale whose increments satisfy
\begin{equation*}
\Lqnorm{\Vert y_t-y_{t-1}\Vert} = \Lqnorm{\frac{\Vert\epsilon(\xi,x)\Vert}{N}}\le
\frac{\Lqnorm{\Vert\epsilon(\xi,x_*)\Vert}+L_q\Vert x-y\Vert^\delta}{N},
\end{equation*}
using that $\Lqnorm{\Vert\epsilon(\xi,\cdot)\Vert}$ is H\"older continuous with modulus $L_q=\Lqnorm{\mathsf{L}(\xi)}+L$ and exponent $\delta$ (Lemma \ref{lemma:holder:continuity:mean:std:dev}) in the inequality. The required claim follows from the above relation, Theorem \ref{thm:BDG} and $\widehat\epsilon(\xi^N,x)=y_N$. We note that if $q=2$, then the linearity of the expectation, the Pythagorean identity (valid for the Euclidean norm) and independence imply the sharper equality $\Lnorm{\left\Vert\widehat\epsilon(\xi^N,x)\right\Vert}=\frac{\Lnorm{\Vert\epsilon(\xi,x)\Vert}}{\sqrt{N}}$. This fact and Lemma \ref{lemma:holder:continuity:mean:std:dev} imply the claim of the lemma with $C_2=1$.
\end{proof}

\begin{proof}[Proof of Theorem \ref{thm:variance:error:with:line:search}]
We fix $x\in X$ and $x^*\in X^*$ as stated in the theorem and set $z^N:=z(\xi^N;\alpha_N,x)$ and $\overline z^N:=\overline z(\xi^N;\alpha_N,x)$. In the following, we only give a proof for $\widehat\epsilon(\xi^N,z^N)$. The proof for $\widehat\epsilon(\xi^N,\overline z^N)$ requires only minor changes. For reasons to be shown in the following, it will be convenient to define 
$\Delta(x,x^*):=\Vert x-x^*\Vert\vee\Vert x-x^*\Vert^\delta$ and, for any $s>0$, $\mathsf{R}(s):=(1+L\hat\alpha)\Delta(x,x^*)+\hat\alpha s$ and the ball $\mathbb{B}(s):=\mathbb{B}[x^*,\mathsf{R}(s)]$.

Example 14.29 of \cite{rockafellar:wets1998} and Assumption \ref{assumption:holder:continuity} imply that the map $\Xi\times X\ni(\omega,x)\mapsto\Vert\widehat\epsilon(\xi^N(\omega),x)\Vert$ is a \emph{normal integrand}, that is, 
$$
\omega\mapsto\epi\left\Vert\widehat\epsilon(\xi^N(\omega),\cdot)\right\Vert:=\{(x,y)\in X\times\re:\left\Vert\widehat\epsilon(\xi^N(\omega),x)\right\Vert\le y\}
$$
is a set-valued measurable function. This fact and Theorem 14.37 in \cite{rockafellar:wets1998} imply further that, for any measurable function $\epsilon:\Omega\rightarrow[0,\infty)$ and $R>0$,
\begin{eqnarray}
\omega\mapsto\sup_{x'\in \mathbb{B}(\epsilon(\omega))\cap X}\left\Vert\widehat\epsilon(\xi^N(\omega),x')\right\Vert \quad\mbox{ and }\quad
\omega\mapsto\sup_{x'\in \mathbb{B}[x^*,R]\cap X}\left\Vert\widehat\epsilon(\xi^N(\omega),x')\right\Vert\label{equation:measurability:issue}
\end{eqnarray}
are measurable functions.

We first prove item (ii) for the easier case when $X$ is compact. We set $R:=\diam(X)$ and note that $z^N\in\mathbb{B}[x^*,R]\cap X$. This and \eqref{equation:measurability:issue} imply that 
\begin{eqnarray*}
\Lpnorm{\Vert\widehat\epsilon(\xi^N,z^N)\Vert}&\le &\Lpnorm{\sup_{x'\in\mathbb{B}[x^*,R]\cap X}\Vert\widehat\epsilon(\xi^N,x')\Vert}\nonumber\\
&\le &\Lpnorm{\sup_{x'\in\mathbb{B}[x^*,R]\cap X}\Vert\widehat\epsilon(\xi^N,x')-\widehat\epsilon(\xi^N,x^*)\Vert}+\Lpnorm{\Vert\widehat\epsilon(\xi^N,x^*)\Vert}\nonumber\\
&\le &c\left[\frac{3^\delta\sqrt{d}L_2}{\sqrt{\delta}\left(\sqrt{2}^\delta-1\right)}+\sqrt{p}L_2+pL_p\right]\frac{\diam(X)^\delta}{\sqrt{N}}+\frac{C_{p}\Lpnorm{\Vert\epsilon(\xi,x^*)\Vert}}{\sqrt{N}},
\end{eqnarray*}
for some universal constant $c>0$, where we used Lemmas \ref{lemma:error:decay:empirical:process} and \ref{lemma:decay:empirical:error} with $q=p$ in the last inequality. The above inequality and definition \eqref{equation:oracle:error:variance} prove item (ii).

We now prove item (i) in the case $X$ may be unbounded. Given $\alpha\in[0,\hat\alpha]$, Lemma \ref{lemma:proj}(iv) implies that $x^*=\Pi[x^*-\alpha T(x^*)]$. Taking into account this fact, Lemma \ref{lemma:proj}(iii) and definitions of $z(\xi^N;\alpha,x)$, \eqref{equation:oracle:error} and \eqref{equation:empirical:mean:operator:&:error}, we get that, for any $\alpha\in[0,\hat\alpha]$,
\begin{eqnarray}
\left\|x^*-z(\xi^N;\alpha,x)\right\|&=&\,\left\Vert\Pi\left[x^*-\alpha T(x^*)\right]- \Pi\left[x-\alpha\left(T(x)+\widehat\epsilon(\xi^N,x)\right)\right]\right\Vert\nonumber\\
&\leq & \,\|x^*-x\|+\alpha\| T(x)-T(x^*)\| + \alpha\left\Vert\widehat\epsilon(\xi^N,x)\right\Vert\nonumber\\
&\leq & (1+L\hat\alpha)\left[\Vert x-x^*\Vert\vee\Vert x-x^*\Vert^\delta\right]+\hat\alpha\,\left\|\widehat\epsilon(\xi^N,x)\right\|,\label{lemma:error:decay:emp:eq1}
\end{eqnarray}
where, in last inequality, we used H\"older continuity of $T$ (Lemma \ref{lemma:holder:continuity:mean:std:dev}). 

In the sequel we define the quantities 
\begin{equation}\label{lemma:error:decay:emp:s*:eN}
s_*:=L_{2p}\Delta(x,x^*)\quad\mbox{ and }\quad\epsilon_N:=\left\Vert\widehat\epsilon(\xi^N,x)\right\Vert.
\end{equation}
Setting $\alpha:=\alpha_N$ in \eqref{lemma:error:decay:emp:eq1}, we have that\footnote{Note that from $\alpha_N\in[0,1]$ and convexity of $X$ and $\mathbb{B}(\epsilon_N)$, we also have that $\overline z^N\in \mathbb{B}(\epsilon_N)\cap X$.} $z^N\in \mathbb{B}(\epsilon_N)\cap X$. We now make the following decomposition
\begin{eqnarray}
\Lpnorm{\left\Vert\widehat\epsilon(\xi^N,z^N)\right\Vert}=I_1+I_2,\label{lemma:error:decay:emp:eq2}
\end{eqnarray}
using the definitions
\begin{eqnarray*}
I_1:=\Lpnorm{\left\Vert\widehat\epsilon(\xi^N,z^N)\right\Vert\unit_{\{\epsilon_N\le s_*\}}}\quad\mbox{ and }\quad
I_2:=\Lpnorm{\left\Vert\widehat\epsilon(\xi^N,z^N)\right\Vert\unit_{\{\epsilon_N>s_*\}}}.
\end{eqnarray*}

\textsf{\textbf{PART 1} (Upper bound on $I_1$):}  from the fact that $z^N\in\mathbb{B}(\epsilon_N)\cap X$ and \eqref{equation:measurability:issue}, we may bound $I_1$ by
\begin{eqnarray*}
I_1&=&\Lpnorm{\Vert\widehat\epsilon(\xi^N,z^N)\Vert\unit_{\{\epsilon_N\le s_*\}}}\nonumber\\
&\le &\Lpnorm{\sup_{x'\in \mathbb{B}(s_*)\cap X}\Vert\widehat\epsilon(\xi^N,x')\Vert}\nonumber\\
&\le &\Lpnorm{\sup_{x'\in \mathbb{B}(s_*)\cap X}\Vert\widehat\epsilon(\xi^N,x')-\widehat\epsilon(\xi^N,x^*)\Vert}+\Lpnorm{\Vert\widehat\epsilon(\xi^N,x^*)\Vert}\nonumber\\
&\le &c\left[\frac{3^\delta\sqrt{d}L_2}{\sqrt{\delta}\left(\sqrt{2}^\delta-1\right)}+\sqrt{p}L_2+pL_p\right]\frac{\mathsf{R}(s_*)^\delta}{\sqrt{N}}+\frac{C_{p}\Lpnorm{\Vert\epsilon(\xi,x^*)\Vert}}{\sqrt{N}},
\end{eqnarray*}
where we used Lemmas \ref{lemma:error:decay:empirical:process} and \ref{lemma:decay:empirical:error} with $q=p$ in the last inequality. Using the fact that $\mathsf{R}(s_*)=\left(1+L\hat\alpha+L_{2p}\hat\alpha\right)\Delta(x,x^*)$ and setting $c_\delta:=\frac{c3^\delta}{\sqrt{\delta}(\sqrt{2}^\delta-1)}$, we get from the above chain of inequalities that
\begin{equation}\label{lemma:error:decay:emp:i1:eq3}
I_1\le\left[\left(c_\delta\sqrt{d}+c\sqrt{p}\right)L_2+cpL_p\right]C_{\mathsf{L}\hat\alpha,p}^\delta\frac{\Delta(x,x^*)^\delta}{\sqrt{N}}+\frac{C_{p}\Lpnorm{\Vert\epsilon(\xi,x^*)\Vert}}{\sqrt{N}},
\end{equation}
with $C_{\mathsf{L}\hat\alpha,p}:=1+L\hat\alpha+L_{2p}\hat\alpha$.

\textsf{\textbf{PART 2} (Upper bound on $I_2$):} Defining $\widehat L_N:=N^{-1}\sum_{j=1}^N\mathsf{L}(\xi_j)$, we note that
\begin{eqnarray*}
\left\Vert\widehat\epsilon(\xi^N,z^N)\right\Vert &\le &\left\Vert\widehat\epsilon(\xi^N,z^N)-\widehat\epsilon(\xi^N,x^*)\right\Vert+\left\Vert\widehat\epsilon(\xi^N,x^*)\right\Vert\nonumber\\
&\le &\left\Vert\frac{1}{N}\sum_{j=1}^N\left[F(\xi_j,z^N)-F(\xi_j,x^*)\right]\right\Vert+
\left\Vert T(z^N)-T(x^*)\right\Vert+\left\Vert\widehat\epsilon(\xi^N,x^*)\right\Vert\nonumber\\
&\le &\left(\widehat L_N+L\right)\left\Vert z^N-x^*\right\Vert^\delta+\left\Vert\widehat\epsilon(\xi^N,x^*)\right\Vert\nonumber\\
&\le &\left(\widehat L_N+L\right)(1+L\hat\alpha)\Delta(x,x^*)+\hat\alpha\left(\widehat L_N+L\right)\epsilon_N+\epsilon_N^*,
\end{eqnarray*}
using Assumption \ref{assumption:holder:continuity} and Lemma \ref{lemma:holder:continuity:mean:std:dev} in the third inequality and 
\eqref{lemma:error:decay:emp:eq1} with $\alpha:=\alpha_N$, \eqref{lemma:error:decay:emp:s*:eN} and the definition $\epsilon_N^*:=\left\Vert\widehat\epsilon(\xi^N,x^*)\right\Vert$ in the last inequality. The inequality above and definition of $I_2$ imply that
\begin{eqnarray}
I_2&=&\Lpnorm{\left\Vert\widehat\epsilon(\xi^N,z^N)\right\Vert\unit_{\{\epsilon_N>s_*\}}}\nonumber\\
&\le &(1+L\hat\alpha)\Delta(x,x^*)\Lpnorm{\left(\widehat L_N+L\right)\unit_{\{\epsilon_N>s_*\}}}%
+\hat\alpha\Lpnorm{\left(\widehat L_N+L\right)\epsilon_N}
+\Lpnorm{\epsilon_N^*}\nonumber\\
&\le & (1+L\hat\alpha)\Delta(x,x^*)\Ldpnorm{\widehat L_N+L}\Ldpnorm{\unit_{\{\epsilon_N>s_*\}}}
+\hat\alpha\Ldpnorm{\widehat L_N+L}\Ldpnorm{\epsilon_N}
+\Lpnorm{\epsilon_N^*},\label{lemma:error:decay:emp:eq4}
\end{eqnarray}
where we used H\"older's inequality. 

With respect to the last term in the rightmost expression of \eqref{lemma:error:decay:emp:eq4}, 
we have, in view of Lemma \ref{lemma:decay:empirical:error} with $q=p$,
\begin{eqnarray}
\Lpnorm{\epsilon_N^*}=\Lpnorm{\Vert\widehat\epsilon(\xi^N,x^*)\Vert}\le \frac{C_{p}\Lpnorm{\Vert\epsilon(\xi,x^*)\Vert}}{\sqrt{N}}.\label{lemma:error:decay:emp:eq5}
\end{eqnarray}

Concerning the second term in the rightmost expression of \eqref{lemma:error:decay:emp:eq4}, 
Lemma \ref{lemma:decay:empirical:error} with $q=2p$ implies that
\begin{equation}
\Ldpnorm{\epsilon_N}=\Ldpnorm{\left\Vert\widehat\epsilon(\xi^N,x)\right\Vert}\le C_{2p}\frac{\Ldpnorm{\Vert\epsilon(\xi,x^*)\Vert}+L_{2p}\Vert x-x^*\Vert^\delta}{\sqrt{N}}. 
\label{lemma:error:decay:emp:eq6}
\end{equation}

From Markov's inequality and \eqref{lemma:error:decay:emp:eq6} we obtain 
\begin{eqnarray}
\Ldpnorm{\unit_{\{\epsilon_N>s_*\}}}&=&\sqrt[2p]{\esp\left[\unit_{\left\{\epsilon_N>s_*\right\}}\right]}=\sqrt[2p]{\prob\left(\left\Vert\widehat\epsilon(\xi^N,x)\right\Vert>s_*\right)}\nonumber\\
&\le &\sqrt[2p]{\frac{\esp\left[\left\Vert\widehat\epsilon(\xi^N,x)\right\Vert^{2p}\right]}{s_*^{2p}}}\nonumber\\
&=&\frac{\Ldpnorm{\left\Vert\widehat\epsilon(\xi^N,x)\right\Vert}}{s_*}\nonumber\\
&\le &C_{2p}\frac{\Ldpnorm{\Vert\epsilon(\xi,x^*)\Vert}+L_{2p}\Vert x-x^*\Vert^\delta}{s_*\sqrt{N}}.\label{lemma:error:decay:emp:eq7}
\end{eqnarray}

The convexity of $t\mapsto t^{2p}$ and the fact that $\{\xi_j\}_{j\in[N]}$ is an i.i.d. sample of $\probn$ imply that $\Ldpnorm{\widehat L_N+L}\le\Ldpnorm{\mathsf{L}(\xi)}+L=L_{2p}$. Using this fact and putting together relations \eqref{lemma:error:decay:emp:eq4}-\eqref{lemma:error:decay:emp:eq7} we get
\begin{eqnarray}
I_2&\le & (1+L\hat\alpha)\frac{\Delta(x,x^*)L_{2p}}{s_*}
C_{2p}\frac{\Ldpnorm{\Vert\epsilon(\xi,x^*)\Vert}+L_{2p}\Vert x-x^*\Vert^\delta}{\sqrt{N}}
\nonumber\\
&&+L_{2p}\hat\alpha C_{2p}\frac{\Ldpnorm{\Vert\epsilon(\xi,x^*)\Vert}+L_{2p}\Vert x-x^*\Vert^\delta}{\sqrt{N}}
+\frac{C_{p}\Lpnorm{\Vert\epsilon(\xi,x^*)\Vert}}{\sqrt{N}}\nonumber\\
&=&C_{2p}\left(1+L\hat\alpha+L_{2p}\hat\alpha\right)\frac{\Ldpnorm{\Vert\epsilon(\xi,x^*)\Vert}}{\sqrt{N}}+\frac{C_{p}\Lpnorm{\Vert\epsilon(\xi,x^*)\Vert}}{\sqrt{N}}\nonumber\\
&&+C_{2p}\left(1+L\hat\alpha+L_{2p}\hat\alpha\right)\frac{L_{2p}\Vert x-x^*\Vert^\delta}{\sqrt{N}},\label{lemma:error:decay:emp:eq8}
\end{eqnarray}
where we used the fact that\footnote{Note that $[\Delta(x,x^*)\Vert x-x^*\Vert^\delta]^2\lesssim\Vert x-x^*\Vert^{4\delta}$ with $4\delta>2$ in the Lipschitz continuous case. The geometry of projection methods implies the derivation of a recursion in terms of $\{\Vert x^k-x^*\Vert^2\}$. It is then crucial for the convergence analysis that follows that we can choose a $s_*$ that balances the bounds $\mathsf{R}(s_*)^\delta\lesssim\Vert x-x^*\Vert^{\beta_1}$ in $I_1$ and $\frac{\Delta(x,x^*)}{s_*}\Vert x-x^*\Vert^\delta\lesssim\Vert x-x^*\Vert^{\beta_2}$ in $I_2$ with $\beta_1,\beta_2\in(0,1]$.} $s_*=L_{2p}\Delta(x,x^*)$. 

Relations \eqref{lemma:error:decay:emp:eq2}-\eqref{lemma:error:decay:emp:i1:eq3} and \eqref{lemma:error:decay:emp:eq8}, definition \eqref{equation:oracle:error:variance} and the facts that $\Delta(x,x^*)^\delta\le\delta_1\vee\Vert x-x^*\Vert^\delta$ and $\Lpnorm{\Vert\epsilon(\xi,x^*)\Vert}\le\Ldpnorm{\Vert\epsilon(\xi,x^*)\Vert}$ prove item (i).
\end{proof}

\begin{remark}[Constants]\label{rem:constants:thm:correlated:error}
In Theorem \ref{thm:variance:error:with:line:search}, the constants satisfy
\begin{eqnarray*}
&&\mathsf{c}_1:=2C_p+C_{2p}C_{\mathsf{L}\hat\alpha,p},\quad\quad \mathsf{c}_3\lesssim pC_{\mathsf{L}\hat\alpha,p}^\delta,\quad\quad\mathsf{c}_4:= C_{2p} C_{\mathsf{L}\hat\alpha,p},\\
&&\mathsf{c}_2\lesssim\left[\frac{3^\delta\sqrt{d}}{\sqrt{\delta}\left(\sqrt{2}^\delta-1\right)}+\sqrt{p}\right]C_{\mathsf{L}\hat\alpha,p}^\delta,\quad\quad
\mathsf{d}_2\lesssim \left[\frac{3^\delta\sqrt{d}}{\sqrt{\delta}\left(\sqrt{2}^\delta-1\right)}+\sqrt{p}\right],
\end{eqnarray*}
where $C_{\mathsf{L}\hat\alpha,p}:=1+2L\hat\alpha+\Ldpnorm{\mathsf{L}(\xi)}\hat\alpha$ and $C_p$ and $C_{2p}$ are defined in Lemma \ref{lemma:decay:empirical:error}.
\end{remark}

\section{Analysis of \textsf{Algorithm \ref{algorithm:DSSA:extragradient}} for Lipschitz continuous operators}\label{section:algorithm:extragradient:DSSA}

We state  next additional assumptions needed for the convergence analysis of our algorithms. In this section we always assume that in Assumption \ref{assumption:holder:continuity} we have $\delta=1$. For brevity, we will not mention it any further. 
\begin{assumption}[Consistency]\label{assump:existence}
The solution set $X^*$ of VI$(T,X)$ is non-empty.
\end{assumption}

\begin{assumption}[Pseudo-monotonicity]\label{assump:monotonicity}
We assume that $T:X\rightarrow\re^d$ as defined in \eqref{equation:expected:valued:objective} is pseudo-monotone\footnote{Pseudo-monotonicity generalizes monotonicity: $\langle T(z),z-x\rangle\ge\langle T(x),z-x\rangle$ for all $x,z\in X$. Recall that the gradient of a smooth convex function is monotone and the gradient of a quotient of a positive smooth convex function with a positive smooth concave function is pseudo-monotone.}: for all $z,x\in X$,
$
\langle T(x),z-x\rangle\ge 0\Longrightarrow\langle T(z),z-x\rangle\ge 0.
$
\end{assumption}

\begin{assumption}[I.I.D. sampling]\label{assump:iid:sampling}
In \textsf{Algorithm \ref{algorithm:DSSA:extragradient}}, the sequences $\{\xi^k_j:k\in\mathbb{N}_0,j\in[N_k]\}$ and $\{\eta^k_j:k\in\mathbb{N}_0,j\in[N_k]\}$ are i.i.d. samples drawn from $\probn$ independent of each other. Moreover, $\sum_{k=0}^\infty N_k^{-1}<\infty$.
\end{assumption}

Concerning \textsf{Algorithm \ref{algorithm:DSSA:extragradient}}, we shall study the stochastic process $\{x^k\}$ with respect to the filtrations
$$
\alg_k=\sigma(x^0,\xi^0,\ldots,\xi^{k-1},
\eta^0,\ldots,\eta^{k-1}),\quad
\widehat\alg_k=\sigma(x^0,\xi^0,\ldots,\xi^k,
\eta^0,\ldots,\eta^{k-1}).
$$

Recalling \eqref{equation:oracle:error}, \eqref{equation:empirical:mean:operator:&:error} and \textsf{Algorithm \ref{algorithm:DSSA:extragradient}}, we will define the following oracle errors: 
\begin{eqnarray}
\epsilon^k_1:=\widehat\epsilon(\xi^k,x^k),\quad\quad\quad\epsilon^k_2:=\widehat\epsilon(\eta^k,z^k),\quad\quad\quad\epsilon^k_3:=\widehat\epsilon(\xi^k,z^k). \label{equation:oracle:errors:DSSA:extragradient}
\end{eqnarray}
Their relations to $\alg_k$ and $\widehat\alg_k$ will be essential in the convergence analysis. By definition of \textsf{Algorithm \ref{algorithm:DSSA:extragradient}} and Assumption \ref{assump:iid:sampling}, we have that $z^k\in\widehat{\alg}_k$ and $\eta^k\perp\perp\widehat{\alg}_k$. These facts imply that, with respect to the \emph{sampling} time scale, the process $[N_k]\ni t\mapsto N_k^{-1}\sum_{j=1}^{t}\epsilon(\eta^k_j,z^k)$ defines a martingale difference adapted to the filtration $\sigma(\widehat{\alg}_k,\eta^k_1,\ldots,\eta^k_t)$ with final element $\epsilon^k_2$. With respect to the \emph{iteration} time scale, the same facts imply that the process $k\mapsto\epsilon^k_2$ defines a martingale difference adapted to the filtration $\widehat{\alg}_{k+1}$. Similar observations hold for the processes $[N_k]\ni t\mapsto N_k^{-1}\sum_{j=1}^{t}\epsilon(\xi^k_j,x^k)$ and $k\mapsto\epsilon^k_1$ using the facts that $x^k\in\alg_k$ and $\xi^k\perp\perp\alg_k$. The line search scheme \eqref{algo:armijo:rule} introduces the error $\epsilon^k_3$. It does not have the previous martingale-like properties above due to the coupling between $\xi^k$ and $z^k$. This will be resolved by applying Theorem \ref{thm:variance:error:with:line:search} with $\xi^N:=\xi^k$, $x:=x^k$ and $\alpha_N:=\alpha_k$, noting that $z^k=z(\xi^k;\alpha_k,x^k)$ and $x^k\in\alg_k$. It is also important to note that the stepsize $\alpha_k$ is a random variable satisfying $\alpha_k\notin\alg_k$ and $\alpha_k\in\widehat{\alg}_k$.

\subsection{Convergence analysis}
We first show that the line search \eqref{algo:armijo:rule} in \textsf{Algorithm \ref{algorithm:DSSA:extragradient}} is well defined.
\begin{lemma}[Good definition of the line search]
Consider Assumption \ref{assumption:holder:continuity}. Then the line search \eqref{algo:armijo:rule} in \textsf{\emph{Algorithm \ref{algorithm:DSSA:extragradient}}} terminates after a finite number $\ell_k$ of iterations.
\end{lemma}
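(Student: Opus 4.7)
The plan is to use Assumption 2.1 (with $\delta=1$) to control the empirical operator's variation along the trial path, then backtrack the geometric grid until the condition becomes automatic.

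First, I would observe that under Assumption \ref{assumption:holder:continuity} with $\delta=1$, each realization $F(\xi_j^k,\cdot)$ is $\mathsf{L}(\xi_j^k)$-Lipschitz on $X$. By the triangle inequality applied to the empirical average \eqref{equation:empirical:average:DSSA:extragradient}, it follows that $\widehat F(\xi^k,\cdot)$ is Lipschitz with a.s.\ finite empirical modulus
$$\widehat{\mathsf{L}}_k := \frac{1}{N_k}\sum_{j=1}^{N_k}\mathsf{L}(\xi_j^k) < \infty,$$
since the integrability hypothesis on $\mathsf{L}$ in Assumption \ref{assumption:holder:continuity} guarantees $\mathsf{L}(\xi)<\infty$ a.s. Hence, for every $\alpha>0$,
$$\left\Vert\widehat F(\xi^k,z^k(\alpha))-\widehat F(\xi^k,x^k)\right\Vert \le \widehat{\mathsf{L}}_k\,\Vert z^k(\alpha)-x^k\Vert.$$

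Consequently, multiplying by $\alpha$ shows that the test \eqref{algo:armijo:rule} is satisfied whenever $\alpha\,\widehat{\mathsf{L}}_k\,\Vert z^k(\alpha)-x^k\Vert \le \lambda\,\Vert z^k(\alpha)-x^k\Vert$; this is trivial when $z^k(\alpha)=x^k$ (both sides vanish), and otherwise it holds whenever $\alpha \le \lambda/\widehat{\mathsf{L}}_k$. Since the grid $\{\theta^\ell\hat\alpha:\ell\in\mathbb{N}_0\}$ has $\theta^\ell\hat\alpha\to 0$ as $\ell\to\infty$ (because $\theta\in(0,1)$), there exists a finite
$$\ell_k := \min\left\{\ell\in\mathbb{N}_0 \,:\, \theta^\ell\hat\alpha \le \frac{\lambda}{\widehat{\mathsf{L}}_k}\right\},$$
so the backtracking terminates after at most $\ell_k$ trials with $\alpha_k\ge \theta^{\ell_k}\hat\alpha$. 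I do not expect any obstacle here: the only subtle point is to rule out an indeterminate $0/0$ at trial points where the projection coincides with $x^k$, which is handled by noting that in that case both sides of \eqref{algo:armijo:rule} are zero and the inequality holds trivially. The stopping criterion of the algorithm at iteration $k$ (i.e. $x^k=\Pi[x^k-\hat\alpha\widehat F(\xi^k,x^k)]$) is invoked only to rule out the trivial case $\ell=0$ being forced; the finiteness of $\ell_k$ itself does not depend on it.
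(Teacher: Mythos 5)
Your proof is correct, and it takes a genuinely different route from the paper's. The paper argues by contradiction: if the test \eqref{algo:armijo:rule} failed for every $\ell$, then dividing by the trial stepsize and invoking the monotonicity of $\alpha\mapsto r_\alpha(H_k;x^k)/\alpha$ (Lemma \ref{lemma:residual:decrease}) would give $\Vert\widehat F(\xi^k,z^k(\theta^\ell\hat\alpha))-\widehat F(\xi^k,x^k)\Vert>\lambda\, r(H_k;x^k)>0$ for all $\ell$, contradicting the continuity of $\widehat F(\xi^k,\cdot)$ since $z^k(\theta^\ell\hat\alpha)\to x^k$; that argument uses only continuity together with the non-stopping condition $r(H_k;x^k)>0$, so it is insensitive to the value of $\delta$. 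You instead exploit the standing hypothesis $\delta=1$ of Section \ref{section:algorithm:extragradient:DSSA} to make $\widehat F(\xi^k,\cdot)$ Lipschitz with the a.s.\ finite empirical modulus $\widehat{\mathsf{L}}_k$, so that the acceptance test becomes automatic once $\alpha\le\lambda/\widehat{\mathsf{L}}_k$; your treatment of the degenerate case $z^k(\alpha)=x^k$ (both sides of \eqref{algo:armijo:rule} vanish) is exactly what lets you dispense with the non-stopping condition altogether. What your route buys is the explicit bound $\alpha_k\ge(\theta\lambda/\widehat{\mathsf{L}}_k)\wedge\hat\alpha$, i.e., you obtain the first assertion of the paper's Lemma \ref{lemma:step:lower} in the same stroke, whereas the paper deliberately separates the qualitative termination claim (valid for merely continuous $\widehat F(\xi^k,\cdot)$) from the quantitative stepsize lower bound. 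Two caveats, both shared with the paper's own proof and not gaps in context: termination of the backtracking genuinely requires $\theta<1$ even though the algorithm's statement nominally allows $\theta\in(0,1]$, and your argument is tied to $\delta=1$, which is assumed throughout this section.
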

\begin{proof}
Set $\gamma_\ell:=\theta^{-\ell}\hat\alpha$ and $H_k:=\widehat F(\xi^k,\cdot)$. Assuming by contradiction that the line search \eqref{algo:armijo:rule} does not terminate after a finite number of iterations, for every $\ell\in\mathbb{N}_0$,
\begin{equation*}
\left\Vert\widehat F\left(\xi^k,z^k(\gamma_\ell)\right)-\widehat F\left(\xi^k,x^k\right)\right\Vert>\lambda \frac{r_{\gamma_\ell}(H_k;x^k)}{\gamma_\ell}\ge\lambda\cdot r(H_k;x^k),
\end{equation*} 
using definition of $r_{\alpha}(H_k;\cdot)$ in Section \ref{section:preliminaries}, the fact that $\gamma_\ell\in(0,1]$ and Lemma \ref{lemma:residual:decrease} in the last inequality. The contradiction follows by letting $\ell\rightarrow\infty$ in the above inequality and invoking the continuity of  $\widehat F(\xi^k,\cdot)$, resulting from  Assumption \ref{assumption:holder:continuity}, the fact that $\lim_{\ell\rightarrow\infty}z^k(\gamma_\ell)=x^k$, which follows from the continuity of $\Pi$, and the fact that $r(H_k;x^k)>0$, which follows from the definition of \textsf{Algorithm \ref{algorithm:DSSA:extragradient}}.
\end{proof}

The next lemma shows that the DS-SA line search scheme \eqref{algo:armijo:rule} either chooses the initial stepsize $\hat\alpha$ or it is an \emph{unbiased} stochastic oracle for a \emph{lower bound} of the Lipschitz constant $L=\esp[\mathsf{L}(\xi)]$ (using the \emph{same samples} generated by the operator's stochastic oracle). Precisely, if $\hat\alpha$ is not chosen, then $\mathcal{O}(\alpha_k^{-1})$ (with an explicit constant) is a.s. a lower bound for 
$
\widehat L_k:=\frac{1}{N_k}\sum_{j=1}^{N_k}\mathsf{L}(\xi^k_j).
$
\begin{lemma}[Unbiased lower estimation of the Lipschitz constant]\label{lemma:step:lower}
Consider Assumptions \ref{assumption:holder:continuity} and \ref{assump:iid:sampling} and define $\widehat L_k:=\frac{1}{N_k}\sum_{j=1}^{N_k}\mathsf{L}(\xi^k_j)$. Then, if the \textsf{\emph{Algorithm \ref{algorithm:DSSA:extragradient}}} does not stop at iteration $k+1$, a.s.
$
\alpha_k\ge\left(\frac{\lambda\theta}{\widehat L_k}\right)\wedge\hat\alpha.
$
Moreover, 
$
\Lnorm{\alpha_k\big|\alg_k}\cdot\Lnorm{\mathsf{L}(\xi)}\ge(\lambda\theta)\wedge\hat\alpha.
$
\end{lemma}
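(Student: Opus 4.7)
The plan is to split on two cases for $\alpha_k$. If $\alpha_k = \hat\alpha$ the first inequality is immediate, so assume $\alpha_k < \hat\alpha$. Then $\alpha_k/\theta$ is a legal candidate on the backtracking grid $\{\theta^\ell\hat\alpha\}$ that must have failed the test \eqref{algo:armijo:rule}, i.e.,
$$
(\alpha_k/\theta)\left\Vert\widehat F\left(\xi^k,z^k(\alpha_k/\theta)\right)-\widehat F(\xi^k,x^k)\right\Vert
> \lambda\Vert z^k(\alpha_k/\theta)-x^k\Vert.
$$
Because the algorithm does not stop at iteration $k+1$, $r_{\hat\alpha}(\widehat F(\xi^k,\cdot);x^k)>0$, and Lemma \ref{lemma:residual:decrease} (the non-increasingness of $\alpha\mapsto r_\alpha/\alpha$) forces $r_{\alpha_k/\theta}(\widehat F(\xi^k,\cdot);x^k)>0$ too, i.e., $z^k(\alpha_k/\theta)\neq x^k$. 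Hence the positive factor $\Vert z^k(\alpha_k/\theta)-x^k\Vert$ can be cancelled.

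Next, since $\delta=1$ in this section, Assumption \ref{assumption:holder:continuity} says each $F(\xi^k_j,\cdot)$ is $\mathsf{L}(\xi^k_j)$-Lipschitz, and the triangle inequality applied to the empirical average yields that $\widehat F(\xi^k,\cdot)$ is $\widehat L_k$-Lipschitz. Using this in the displayed inequality above and dividing out $\Vert z^k(\alpha_k/\theta)-x^k\Vert$ gives $(\alpha_k/\theta)\widehat L_k>\lambda$, i.e., $\alpha_k>\lambda\theta/\widehat L_k$. Combining with the trivial case $\alpha_k=\hat\alpha$ yields $\alpha_k\ge(\lambda\theta/\widehat L_k)\wedge\hat\alpha$ a.s., which is the first claim.

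For the second claim, use that $\mathsf{L}(\xi)\ge 1$ a.s. (Assumption \ref{assumption:holder:continuity}), hence $\widehat L_k\ge 1$ a.s., so multiplying the first claim by $\widehat L_k$ gives $\alpha_k\widehat L_k\ge(\lambda\theta)\wedge\hat\alpha$ a.s. Taking conditional expectation on $\alg_k$ and applying the conditional Cauchy--Schwarz inequality,
$$
(\lambda\theta)\wedge\hat\alpha\le\esp[\alpha_k\widehat L_k\mid\alg_k]\le\Lnorm{\alpha_k\mid\alg_k}\cdot\Lnorm{\widehat L_k\mid\alg_k}.
$$
By Assumption \ref{assump:iid:sampling}, $\xi^k\perp\perp\alg_k$, so $\Lnorm{\widehat L_k\mid\alg_k}=\Lnorm{\widehat L_k}$, and Jensen's inequality applied to $t\mapsto t^2$ and the empirical average yields $\Lnorm{\widehat L_k}^2\le N_k^{-1}\sum_j\esp[\mathsf{L}(\xi^k_j)^2]=\esp[\mathsf{L}(\xi)^2]=\Lnorm{\mathsf{L}(\xi)}^2$. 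Substituting delivers $\Lnorm{\alpha_k\mid\alg_k}\cdot\Lnorm{\mathsf{L}(\xi)}\ge(\lambda\theta)\wedge\hat\alpha$. The main obstacle here is the combinatorial step of justifying the cancellation of $\Vert z^k(\alpha_k/\theta)-x^k\Vert$ from the failed backtracking inequality via Lemma \ref{lemma:residual:decrease}; once that is in place the remainder is Lipschitz bookkeeping on $\widehat F(\xi^k,\cdot)$ together with Jensen and conditional Cauchy--Schwarz.
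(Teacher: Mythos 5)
Your proof is correct and follows essentially the same route as the paper's: extract the failed backtracking inequality at $\theta^{-1}\alpha_k$, use that $\widehat F(\xi^k,\cdot)$ is $\widehat L_k$-Lipschitz to cancel $\Vert z^k(\theta^{-1}\alpha_k)-x^k\Vert$, then multiply by $\widehat L_k\ge 1$ and apply conditional Cauchy--Schwarz plus convexity of $t\mapsto t^2$ for the second claim. Your explicit appeal to Lemma \ref{lemma:residual:decrease} to justify $z^k(\theta^{-1}\alpha_k)\neq x^k$ is a welcome filling-in of a step the paper leaves implicit, but it does not change the argument.
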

\begin{proof}
If $\hat\alpha$ satisfies \eqref{algo:armijo:rule}, then $\alpha_k=\hat\alpha$. Otherwise, we have
\begin{equation}\label{lemma:step:lower:eq1}
\theta^{-1}\alpha_k\left\Vert\widehat F\left(\xi^k,z^k(\theta^{-1}\alpha_k)\right)-
\widehat F(\xi^k,x^k)\right\Vert>\lambda\left\Vert z^k\left(\theta^{-1}\alpha_k\right)-x^k\right\Vert.
\end{equation}
Assumption \ref{assumption:holder:continuity} and definition of $\widehat F(\xi^k,\cdot)$ in \eqref{equation:empirical:mean:operator:&:error} imply that
\begin{equation}\label{lemma:step:lower:eq2}
\left\Vert\widehat F\left(\xi^k,z^k(\theta^{-1}\alpha_k)\right)-\widehat F(\xi^k,x^k)\right\Vert\le
\widehat L_k\left\Vert z^k\left(\theta^{-1}\alpha_k\right)-x^k\right\Vert.
\end{equation}
The fact that
$z^k\left(\theta^{-1}\alpha_k\right)\neq x^k$ (since the method did not stopped at iteration $k+1$) and \eqref{lemma:step:lower:eq1}-\eqref{lemma:step:lower:eq2} imply that $\alpha_k\ge\frac{\lambda\theta}{\widehat L_k}$. We have thus proved the first statement.

Since a.s. $\mathsf{L}(\xi)\ge1$, we also have a.s. $\widehat{L}_k\alpha_k\ge(\lambda\theta)\wedge\hat\alpha$. The second statement follows from this fact and
\begin{eqnarray*}
(\lambda\theta)\wedge\hat\alpha &\le &\esp\left[\alpha_k\widehat L_k\Big|\alg_k\right]\\
&\le & \Lnorm{\alpha_k\big|\alg_k}\cdot\Lnorm{\widehat L_k\Big|\alg_k}\\
&=&\Lnorm{\alpha_k\big|\alg_k}\sqrt{\esp\left[\left(\frac{1}{N_k}\sum_{j=1}^{N_k}\mathsf{L}(\xi^k_j)\right)^2\Bigg|\alg_k\right]}\\
&\le &\Lnorm{\alpha_k\big|\alg_k}\sqrt{\frac{1}{N_k}\sum_{j=1}^{N_k}\esp\left[\mathsf{L}(\xi^k_j)^2\Big|\alg_k\right]}=\Lnorm{\alpha_k\big|\alg_k}\cdot\Lnorm{\mathsf{L}(\xi)},
\end{eqnarray*}
using H\"older's inequality 
in the second inequality, 
the convexity of $t\mapsto t^2$ 
in the third inequality 
and the fact
that $\xi^k$ is an i.i.d sample of $\probn$ with $\xi^k\perp\perp\alg_k$ in the last equality. 
\end{proof}

Recall \eqref{equation:oracle:errors:DSSA:extragradient}. We define, for $k\in\mathbb{N}_0$ and for $x^*\in X^*$,
\begin{eqnarray}
\Delta A_{k}&:=&(1-6\lambda^2)\hat\alpha^2\Vert\epsilon^k_1\Vert^2+
6\hat\alpha^2\Vert\epsilon^k_2\Vert^2+6\hat\alpha^2\Vert\epsilon^k_3\Vert^2,\label{def:A:armijo}\\
\Delta M_{k}(x^*)&:=&2\alpha_k\langle x^*-z^k,\epsilon^k_2\rangle.\label{def:M:armijo}
\end{eqnarray}
Lemma \ref{lemma:recursion:armijo} and Proposition \ref{prop:A:armijo} stated in the following are proved in the Appendix.

\begin{lemma}[A recursive error bound for \textsf{Algorithm \ref{algorithm:DSSA:extragradient}}]\label{lemma:recursion:armijo}
Consider Assumptions \ref{assumption:holder:continuity} and \ref{assump:existence}-\ref{assump:monotonicity}. If \textsf{\emph{Algorithm \ref{algorithm:DSSA:extragradient}}} does not stop at iteration $k+1$ then, for all $x^*\in X^*$,
\begin{eqnarray*}
\Vert x^{k+1}-x^*\Vert^2\le \Vert x^k-x^*\Vert^2-\frac{(1-6\lambda^2)\alpha_k^2}{2} r(x^k)^2+\Delta M_{k}(x^*)+\Delta A_k.
\end{eqnarray*}
\end{lemma}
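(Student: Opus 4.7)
The starting point is the standard two-projection identity for extragradient-type iterations. Apply Lemma \ref{lemma:proj}(i) first to the projection \eqref{algo:extragradient:armijo2} defining $x^{k+1}$ with $v=\alpha_k\widehat F(\eta^k,z^k)$, $x=x^k$, $z=x^{k+1}$, $u=x^*$, and then to the projection \eqref{algo:extragradient:armijo1} defining $z^k$ with $v=\alpha_k\widehat F(\xi^k,x^k)$, $x=x^k$, $z=z^k$, $u=x^{k+1}$. Adding the two inequalities, the $\Vert x^{k+1}-x^k\Vert^2$ terms cancel, yielding
\[
\Vert x^{k+1}-x^*\Vert^2 \le \Vert x^k-x^*\Vert^2 -\Vert z^k-x^k\Vert^2-\Vert z^k-x^{k+1}\Vert^2 + 2\alpha_k\langle \widehat F(\eta^k,z^k)-\widehat F(\xi^k,x^k),\,z^k-x^{k+1}\rangle - 2\alpha_k\langle \widehat F(\eta^k,z^k),\,z^k-x^*\rangle.
\]

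For the last inner product, write $\widehat F(\eta^k,z^k)=T(z^k)+\epsilon^k_2$. Since $z^k\in X$, we have $\langle T(x^*),z^k-x^*\rangle\ge 0$, so pseudomonotonicity (Assumption \ref{assump:monotonicity}) forces $\langle T(z^k),z^k-x^*\rangle\ge 0$, and this piece can be dropped. The leftover is exactly $\Delta M_k(x^*)=2\alpha_k\langle \epsilon^k_2,x^*-z^k\rangle$. For the cross inner product, I would split
\[
\widehat F(\eta^k,z^k)-\widehat F(\xi^k,x^k)=(\epsilon^k_2-\epsilon^k_3)+\bigl[\widehat F(\xi^k,z^k)-\widehat F(\xi^k,x^k)\bigr],
\]
so that the second bracket is controlled \emph{exactly} by the Armijo-type line search \eqref{algo:armijo:rule}: $\alpha_k\Vert\widehat F(\xi^k,z^k)-\widehat F(\xi^k,x^k)\Vert\le\lambda\Vert z^k-x^k\Vert$.

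Cauchy--Schwarz plus Young's inequality on each piece is the key quantitative step. For the error cross term I use $2\alpha_k\langle \epsilon^k_2-\epsilon^k_3,z^k-x^{k+1}\rangle\le \tfrac{5}{6}\Vert z^k-x^{k+1}\Vert^2+\tfrac{12\alpha_k^2}{5}(\Vert\epsilon^k_2\Vert^2+\Vert\epsilon^k_3\Vert^2)$ (using $(a+b)^2\le 2(a^2+b^2)$), and for the Armijo-controlled term $2\alpha_k\Vert\widehat F(\xi^k,z^k)-\widehat F(\xi^k,x^k)\Vert\cdot\Vert z^k-x^{k+1}\Vert\le 2\lambda\Vert z^k-x^k\Vert\cdot\Vert z^k-x^{k+1}\Vert\le 6\lambda^2\Vert z^k-x^k\Vert^2+\tfrac{1}{6}\Vert z^k-x^{k+1}\Vert^2$. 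The weights $5/6$ and $1/6$ are chosen so that the $\Vert z^k-x^{k+1}\Vert^2$ contributions exactly neutralize the corresponding negative term, while the choice $1/6$ produces the factor $6\lambda^2$ that generates $1-6\lambda^2$ after combining with $-\Vert z^k-x^k\Vert^2$; this is precisely why the assumption $\lambda<1/\sqrt{6}$ appears. The outcome is
\[
\Vert x^{k+1}-x^*\Vert^2 \le \Vert x^k-x^*\Vert^2 -(1-6\lambda^2)\Vert z^k-x^k\Vert^2 + \tfrac{12\alpha_k^2}{5}(\Vert\epsilon^k_2\Vert^2+\Vert\epsilon^k_3\Vert^2) +\Delta M_k(x^*).
\]

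It remains to convert $\Vert z^k-x^k\Vert$ into the natural residual $r(x^k)$. By Lemma \ref{lemma:proj}(iii), $\Vert z^k-\Pi[x^k-\alpha_kT(x^k)]\Vert\le\alpha_k\Vert\epsilon^k_1\Vert$, hence $\Vert z^k-x^k\Vert\ge r_{\alpha_k}(x^k)-\alpha_k\Vert\epsilon^k_1\Vert\ge\alpha_k r(x^k)-\alpha_k\Vert\epsilon^k_1\Vert$, where the second inequality uses $\alpha_k\le\hat\alpha\le 1$ and Lemma \ref{lemma:residual:decrease}. Squaring via $(a-b)^2\ge a^2/2-b^2$ gives $-\Vert z^k-x^k\Vert^2\le -\alpha_k^2 r(x^k)^2/2+\alpha_k^2\Vert\epsilon^k_1\Vert^2$. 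Substituting, using $\alpha_k\le\hat\alpha$ and $12/5<6$ to enlarge constants into the precise form of $\Delta A_k$ in \eqref{def:A:armijo}, yields the claim. The main obstacle is the bookkeeping in the previous paragraph: identifying the right three-way split of the cross inner product and tuning the Young-inequality weights so that Armijo's contraction, the error magnitudes, and the cancellation against $-\Vert z^k-x^{k+1}\Vert^2$ line up to produce exactly the coefficient $1-6\lambda^2$ on the descent term.
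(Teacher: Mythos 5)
Your proof is correct and follows essentially the same route as the paper: the same double application of Lemma \ref{lemma:proj}(i) with cancellation of $\Vert x^{k+1}-x^k\Vert^2$, the same use of pseudo-monotonicity to drop $\langle T(z^k),z^k-x^*\rangle$, the same three-way decomposition of $\widehat F(\eta^k,z^k)-\widehat F(\xi^k,x^k)$ controlled by the line search, and the same conversion of $\Vert z^k-x^k\Vert^2$ to the natural residual via Lemma \ref{lemma:residual:decrease}. The only difference is cosmetic bookkeeping: the paper absorbs $-\Vert z^k-x^{k+1}\Vert^2$ via $2ab-b^2\le 2a^2$ and then applies $(\sum_{i=1}^3 a_i)^2\le 3\sum a_i^2$ to the full difference, yielding the coefficient $6\hat\alpha^2$ directly, whereas your Young-inequality weights $5/6$ and $1/6$ give the slightly tighter $12\alpha_k^2/5$, which you correctly enlarge to fit $\Delta A_k$.
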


\begin{proposition}[Bounds on the oracle's errors]\label{prop:A:armijo}
Consider Assumptions \ref{assumption:holder:continuity}, \ref{assump:existence} and \ref{assump:iid:sampling}. Recall definitions in \eqref{equation:oracle:error:variance}, \eqref{def:A:armijo}, Theorem \ref{thm:variance:error:with:line:search} and Lemma \ref{lemma:decay:empirical:error}. Then there exist positive constants $\mathsf{C}_{p}$ and $\mathsf{\overline C}_{p}$ (depending only on $d$, $p$, $\mathsf{L}({\xi})\hat\alpha$ and $\{N_k\}$) such that, if the method does not stop at iteration $k+1$ we have, for all $x^*\in X^*$, 	
\begin{eqnarray*}
\Lpddnorm{\Delta A_k|\alg_k}&\le &\frac{\mathsf{C}_{p}\left[\hat\alpha\sigma_{\mathsf{a}p}(x^*)\right]^2+\mathsf{\overline{C}}_{p}\left(\hat\alpha\widetilde L_p\right)^2D_k^2}{N_k}.
\end{eqnarray*}
In above, for $X$ compact, we have $\widetilde L_p:=(C_pL_p)\vee L_p^*$ and $D_k:\equiv\diam(X)$. For a general $X$, we have $\widetilde L_p:=\overline{L}_{2p}$ and $D_k:=\Vert x^k-x^*\Vert$.
\end{proposition}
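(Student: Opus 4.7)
The plan is to bound each of the three squared oracle errors appearing in $\Delta A_k$ separately and then sum. Since $\Vert\cdot\Vert_{p/2}$ applied to a non-negative square satisfies $\Lpddnorm{\Vert Y\Vert^2\,|\alg_k} = \Lpnorm{\Vert Y\Vert\,|\alg_k}^2$, the triangle inequality for the conditional $\mathcal{L}^{p/2}$-norm reduces the bound on $\Delta A_k$ to controlling the three quantities $\Lpnorm{\Vert\epsilon^k_i\Vert\,|\alg_k}^2$ for $i=1,2,3$, weighted by $\hat\alpha^2$ constants. Two of them have martingale structure and submit to Lemma \ref{lemma:decay:empirical:error}; the third is the correlated error and is where Theorem \ref{thm:variance:error:with:line:search} does the heavy lifting.

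For $\epsilon^k_1 = \widehat\epsilon(\xi^k,x^k)$, I would condition on $\alg_k$, noting that $x^k\in\alg_k$ and the i.i.d. sample $\xi^k$ is independent of $\alg_k$, so Lemma \ref{lemma:decay:empirical:error} (with $q=p$ and $\delta=1$) applies to give
$\Lpnorm{\Vert\epsilon^k_1\Vert\,|\alg_k}\le C_p(\sigma_p(x^*) + L_p\Vert x^k-x^*\Vert)/\sqrt{N_k}$.
For $\epsilon^k_2 = \widehat\epsilon(\eta^k,z^k)$ the dependency shifts one level: $z^k\in\widehat\alg_k$ while $\eta^k\perp\perp\widehat\alg_k$, so conditioning on $\widehat\alg_k$ and invoking the same lemma yields $\Lpnorm{\Vert\epsilon^k_2\Vert\,|\widehat\alg_k}\le C_p(\sigma_p(x^*) + L_p\Vert z^k-x^*\Vert)/\sqrt{N_k}$. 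I would then take the $p$-th power, apply the tower property $\esp[\cdot|\alg_k]=\esp[\esp[\cdot|\widehat\alg_k]\,|\alg_k]$, and use the triangle inequality for $\Lpnorm{\cdot|\alg_k}$ to pass to $\Lpnorm{\Vert z^k-x^*\Vert\,|\alg_k}$. The latter is bounded by taking $\alpha:=\alpha_k$ in \eqref{lemma:error:decay:emp:eq1}, which gives $\Vert z^k-x^*\Vert\le(1+L\hat\alpha)\Vert x^k-x^*\Vert + \hat\alpha\Vert\epsilon^k_1\Vert$, and then reusing the $\epsilon^k_1$ estimate.

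The correlated error $\epsilon^k_3 = \widehat\epsilon(\xi^k,z^k)$ has no martingale-difference structure because $z^k$ is built from the same sample $\xi^k$ via the line search. This is exactly the scenario handled by Theorem \ref{thm:variance:error:with:line:search}, which I would apply conditionally on $\alg_k$ with $\xi^N:=\xi^k$, $\alpha_N:=\alpha_k$, and $x:=x^k$, noting $z^k = z(\xi^k;\alpha_k,x^k)$. With $\delta=1$, one has $\delta_1=0$, so item (i) of that theorem delivers
$\Lpnorm{\Vert\epsilon^k_3\Vert\,|\alg_k}\le(\mathsf{c}_1\sigma_{2p}(x^*) + \overline{L}_{2p}\Vert x^k-x^*\Vert)/\sqrt{N_k}$
in the general case, and item (ii) yields the analogous bound involving $\sigma_p(x^*)$ and $\diam(X)$ when $X$ is compact.

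To conclude, I would square each of the three bounds, apply $(a+b)^2\le 2a^2+2b^2$, and separate a variance contribution proportional to $\sigma_{\mathsf{a}p}(x^*)^2$ from a geometric contribution proportional to $D_k^2$, where $D_k=\Vert x^k-x^*\Vert$ in the general case and $D_k\equiv\diam(X)$ in the compact case (the index $\mathsf{a}$ matches since $\epsilon^k_3$ is the only term demanding $\sigma_{2p}$, and only in the unbounded setting). Multiplying by $\hat\alpha^2$ and absorbing the universal constants $C_p, C_{2p}, \mathsf{c}_1, \overline{L}_{2p}, L_p^*$, together with $L\hat\alpha$ and $\Ldpnorm{\mathsf{L}(\xi)}\hat\alpha$ factors and the $1/\sqrt{N_k}$ cross terms produced by the $\epsilon^k_2$ substitution, into constants $\mathsf{C}_p$ and $\mathsf{\overline{C}}_p$ with the declared dependencies completes the proof. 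The one conceptually delicate step is the $\epsilon^k_3$ estimate, whose main obstacle, the coupling between $\xi^k$ and $z^k$ introduced by backtracking, is already fully absorbed by Theorem \ref{thm:variance:error:with:line:search}; all remaining work is essentially bookkeeping with H\"older's and the triangle inequality.
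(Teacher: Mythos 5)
Your proposal is correct and follows essentially the same route as the paper's proof: Lemma \ref{lemma:decay:empirical:error} for the martingale-difference errors $\epsilon^k_1$ (conditioning on $\alg_k$) and $\epsilon^k_2$ (conditioning on $\widehat\alg_k$ plus the tower property, with $\Vert z^k-x^*\Vert$ controlled via the projection bound and the $\epsilon^k_1$ estimate), Theorem \ref{thm:variance:error:with:line:search} for the correlated error $\epsilon^k_3$, and then squaring and absorbing constants, with the compact case handled by $\diam(X)$ and item (ii) of that theorem. No gaps.
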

See Remark \ref{rem:constants:A:armijo} in the Appendix for details on the constants above. In the following convergence analysis, we set $p=2$ (see Remark \ref{rem:lp:boundedness} for the interest in higher moments).

\begin{proposition}[Stochastic quasi-Fej\'er property]\label{prop:fejer:armijo}
Consider Assumptions \ref{assumption:holder:continuity} and \ref{assump:existence}-\ref{assump:iid:sampling} and definitions in Proposition \ref{prop:A:armijo} with $p=2$. Set 
$\nu:=\frac{(1-6\lambda^2)\left[(\lambda\theta)\wedge\hat\alpha\right]^2}{2\Lnorm{\mathsf{L}(\xi)}^2}$. If \textsf{\emph{Algorithm \ref{algorithm:DSSA:extragradient}}} does not stop at iteration $k+1$ then, for all $x^*\in X^*$,
\begin{eqnarray*}
\esp\left[\|x^{k+1}-x^*\|^2|\alg_k\right]&\le &\|x^{k}-x^*\|^2 -\nu\cdot r(x^k)^2
+\frac{\mathsf{C}_{2}\left[\hat\alpha\sigma_{2\mathsf{a}}(x^*)\right]^2}{N_k}+\frac{\mathsf{\overline{C}}_{2}\left(\hat\alpha\widetilde L_2\right)^2}{N_k}D_k^2.
\end{eqnarray*}
\end{proposition}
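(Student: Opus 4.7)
The plan is to take the path-wise recursion from Lemma \ref{lemma:recursion:armijo}, condition on $\alg_k$, and then handle the three remaining terms on the right-hand side separately: the quadratic residual term, the martingale-like term $\Delta M_k(x^*)$, and the variance term $\Delta A_k$.

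First I would pass to conditional expectation in the bound
\[
\|x^{k+1}-x^*\|^2 \le \|x^k-x^*\|^2 - \tfrac{(1-6\lambda^2)\alpha_k^2}{2}\,r(x^k)^2 + \Delta M_k(x^*) + \Delta A_k,
\]
and dispose of the martingale piece. Recall $\Delta M_k(x^*)=2\alpha_k\langle x^*-z^k,\epsilon_2^k\rangle$ with $\epsilon_2^k=\widehat\epsilon(\eta^k,z^k)$. Since $z^k$ and $\alpha_k$ are both $\widehat\alg_k$-measurable while the fresh sample $\eta^k$ is independent of $\widehat\alg_k$ (Assumption \ref{assump:iid:sampling}), the inner expectation satisfies $\esp[\epsilon_2^k\mid \widehat\alg_k]=0$. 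Using the tower property $\esp[\,\cdot\mid\alg_k]=\esp[\esp[\,\cdot\mid\widehat\alg_k]\mid\alg_k]$ along with $\alg_k\subset\widehat\alg_k$ gives $\esp[\Delta M_k(x^*)\mid\alg_k]=0$.

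Next I would handle the residual term. Since $x^k\in\alg_k$, we have $r(x^k)^2\in\alg_k$ and therefore
\[
\esp\bigl[\alpha_k^2\,r(x^k)^2\mid\alg_k\bigr] = r(x^k)^2\,\esp[\alpha_k^2\mid\alg_k] = r(x^k)^2\,\Lnorm{\alpha_k\mid\alg_k}^2.
\]
By the second statement of Lemma \ref{lemma:step:lower}, $\Lnorm{\alpha_k\mid\alg_k}\ge\frac{(\lambda\theta)\wedge\hat\alpha}{\Lnorm{\mathsf{L}(\xi)}}$ almost surely, which yields the lower bound
\[
\tfrac{(1-6\lambda^2)}{2}\esp[\alpha_k^2\mid\alg_k]\,r(x^k)^2 \ge \nu\,r(x^k)^2,
\]
with $\nu$ as defined in the statement. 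Finally, I would control the variance term by applying Proposition \ref{prop:A:armijo} with $p=2$, giving directly the bound
\[
\esp[\Delta A_k\mid\alg_k] \le \frac{\mathsf{C}_2[\hat\alpha\sigma_{2\mathsf{a}}(x^*)]^2}{N_k} + \frac{\overline{\mathsf{C}}_2(\hat\alpha\widetilde L_2)^2}{N_k}D_k^2.
\]
Adding the three pieces together produces the stated inequality.

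The bulk of the proof is mechanical once the filtration structure is in place; the only delicate step is the martingale vanishing of $\Delta M_k(x^*)$, which crucially relies on the fact that although the line search scheme couples $\alpha_k$ and $z^k$ to $\xi^k$, the second (outer) evaluation uses the independent sample $\eta^k$. This is precisely why \textsf{Algorithm \ref{algorithm:DSSA:extragradient}} draws two independent batches per iteration: the inner batch $\xi^k$ handles the nonmartingale-like Lipschitz estimation (absorbed through Proposition \ref{prop:A:armijo} and Theorem \ref{thm:variance:error:with:line:search}), while the outer batch $\eta^k$ preserves the standard martingale cancellation of the cross-term. No other subtlety is expected here; the genuinely novel obstacle (the correlated error $\epsilon_3^k$) has already been quarantined inside Proposition \ref{prop:A:armijo}.
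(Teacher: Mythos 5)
Your proposal is correct and follows essentially the same route as the paper's proof: condition the recursion of Lemma \ref{lemma:recursion:armijo} on $\alg_k$, kill $\Delta M_k(x^*)$ via the tower property through $\widehat\alg_k$ using $z^k,\alpha_k\in\widehat\alg_k$ and $\eta^k\perp\perp\widehat\alg_k$, lower-bound $\esp[\alpha_k^2\mid\alg_k]$ by Lemma \ref{lemma:step:lower}, and absorb $\Delta A_k$ through Proposition \ref{prop:A:armijo} with $p=2$. No gaps.
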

\begin{proof}
We first show that $\{\Delta M_k(x^*),\alg_k\}$ defines a martingale difference even if $\alpha_k\notin\alg_k$. Indeed, the facts that $z^k\in\widehat{\alg}_k$ and $\eta^k\perp\perp\widehat{\alg}_k$ imply that $\esp[\epsilon^k_2|\widehat{\alg}_k]=0$, where $\epsilon^k_2$ is defined in \eqref{equation:oracle:errors:DSSA:extragradient}. This fact, $z^k\in\widehat\alg_k$ and $\alpha_k\in\widehat{\alg}_k$ imply that $\esp[\Delta M_k(x^*)|\widehat{\alg}_k]=0$. Using this and the fact that $\esp[\esp[\cdot|\widehat{\alg}_k]|\alg_k]=\esp[\cdot|\alg_k]$, we finally conclude that $\esp[\Delta M_k(x^*)|\alg_k]=0$ as claimed. The recursion in the statement follows immediately from this fact, relation $\esp\left[\alpha_k^2\big|\alg_k\right]\ge\frac{[(\lambda\theta)\wedge\hat\alpha]^2}{\Lnorm{\mathsf{L}(\xi)}^2}$ in Lemma \ref{lemma:step:lower}, Lemma \ref{lemma:recursion:armijo} and Proposition \ref{prop:A:armijo} with $p=2$, after we take $\esp[\cdot|\alg_k]$ in Lemma \ref{lemma:recursion:armijo} and use the fact that $x^k\in\alg_k$.
\end{proof}

We now proceed to establish the asymptotic convergence of \textsf{Algorithm \ref{algorithm:DSSA:extragradient}}.
\begin{theorem}[Asymptotic convergence]\label{thm:convergence:armijo} 
Under Assumptions \ref{assumption:holder:continuity} and \ref{assump:existence}-\ref{assump:iid:sampling}, either \emph{\textsf{Algorithm \ref{algorithm:DSSA:extragradient}}} stops at iteration $k+1$, in which case $x^k$ is a solution of \emph{VI}$(T,X)$, or it generates an infinite sequence $\{x^k\}$ such that a.s. it is bounded, 
$
\lim_{k\rightarrow\infty}\dist(x^k,X^*)=0,
$
and
$
r(x^k)
$ 
converges to $0$ almost surely and in $\mathcal{L}^2$. In particular, a.s. every cluster point of $\{x^k\}$ belongs to $X^*$.
\end{theorem}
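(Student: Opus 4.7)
This theorem is a routine Fej\'er-type post-processing of the stochastic quasi-Fej\'er recursion already established in Proposition \ref{prop:fejer:armijo}, combined with the Robbins--Siegmund theorem (Theorem \ref{thm:rob}) and the continuity of the natural residual. All the delicate stochastic work (local EP control of the correlated error $\epsilon^k_3$, lower bound on $\alpha_k$, bound on $\Delta A_k$) has been packaged into that proposition, so nothing new in terms of empirical process theory is needed here. I focus on the non-stopping case; the alternative is read off directly from the stopping rule.

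\textbf{Step 1 (Almost sure consequences via Robbins--Siegmund).} Fix $x^*\in X^*$ and set $y_k:=\|x^k-x^*\|^2$. In the unbounded case (the bounded case being strictly easier since $D_k^2=\diam(X)^2$ is then a deterministic constant), Proposition \ref{prop:fejer:armijo} reads
$$
\esp[y_{k+1}\mid\alg_k]\le(1+a_k)\,y_k-\nu\,r(x^k)^2+b_k,
$$
with $a_k:=\mathsf{\overline{C}}_{2}(\hat\alpha\widetilde L_2)^2/N_k$ and $b_k:=\mathsf{C}_{2}(\hat\alpha\sigma_{2\mathsf{a}}(x^*))^2/N_k$. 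Assumption \ref{assump:iid:sampling} forces $\sum_k a_k<\infty$ and $\sum_k b_k<\infty$, so Theorem \ref{thm:rob} gives, almost surely, that $\{y_k\}$ converges and $\sum_k r(x^k)^2<\infty$. Hence $r(x^k)\to 0$ a.s., and convergence of $\|x^k-x^*\|$ for each fixed $x^*\in X^*$ implies that $\{x^k\}$ is a.s. bounded.

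\textbf{Step 2 (Cluster points and distance to $X^*$).} The map $r(x)=\|x-\Pi[x-T(x)]\|$ is continuous, since $T$ is Lipschitz by Lemma \ref{lemma:holder:continuity:mean:std:dev} (with $\delta=1$) and $\Pi$ is nonexpansive by Lemma \ref{lemma:proj}(iii). On the a.s. event from Step 1, any cluster point $\bar x$ of $\{x^k\}$ therefore satisfies $r(\bar x)=\lim_j r(x^{k_j})=0$, i.e., $\bar x=\Pi[\bar x-T(\bar x)]$, so $\bar x\in X^*$ by Lemma \ref{lemma:proj}(iv) with $\alpha=1$. Then $\dist(x^k,X^*)\to 0$ follows by the standard contradiction argument: if $\dist(x^{k_j},X^*)\ge\varepsilon>0$ along a subsequence, boundedness of $\{x^k\}$ extracts a further convergent sub-subsequence whose limit must lie in $X^*$, contradicting the $\varepsilon$-gap.

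\textbf{Step 3 ($\mathcal{L}^2$-convergence of $r(x^k)$) and expected obstacle.} Taking unconditional expectation in the recursion of Step 1 yields $\esp[y_{k+1}]\le(1+a_k)\esp[y_k]-\nu\esp[r(x^k)^2]+b_k$. Iterating and using $\prod_k(1+a_k)\le\exp\bigl(\sum_k a_k\bigr)<\infty$ together with $\sum_k b_k<\infty$ gives $B:=\sup_k\esp[y_k]<\infty$; telescoping then produces
$$
\sum_{k=0}^\infty\nu\,\esp[r(x^k)^2]\le\esp[y_0]+B\sum_{k=0}^\infty a_k+\sum_{k=0}^\infty b_k<\infty,
$$
so $\esp[r(x^k)^2]\to 0$, which is $\mathcal{L}^2$ convergence of $r(x^k)$ to $0$. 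The one subtlety worth noting is that, in the unbounded case, Proposition \ref{prop:fejer:armijo} contributes a \emph{multiplicative} perturbation $a_k y_k$ of the random iterate norm rather than a purely additive noise. This is absorbed cleanly by Robbins--Siegmund precisely because $a_k=\mathcal{O}(1/N_k)$ is summable under Assumption \ref{assump:iid:sampling} --- the quantitative design requirement that justifies dynamic sampling at the rate $\sum_k N_k^{-1}<\infty$.
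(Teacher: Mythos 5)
Your argument for the non-stopping case is essentially the paper's own proof: the same application of Theorem \ref{thm:rob} to the recursion of Proposition \ref{prop:fejer:armijo} with the same choices of $y_k$, $a_k$, $b_k$, $u_k$, followed by continuity of $T$ and $\Pi$ to place cluster points in $X^*$ via Lemma \ref{lemma:proj}(iv). Your Steps 2 and 3 merely spell out details the paper compresses (the contradiction argument for $\dist(x^k,X^*)\to0$, and the expectation-telescoping for $\mathcal{L}^2$-convergence, which the paper dispatches with ``proved in a similar way''); both are correct as written.

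The one place you are too quick is the stopping alternative. The termination test is $x^k=\Pi[x^k-\hat\alpha\widehat F(\xi^k,x^k)]$, which by Lemma \ref{lemma:proj}(iv) only says that $x^k$ solves VI$(\widehat F(\xi^k,\cdot),X)$ for the \emph{empirical} operator, not VI$(T,X)$. To conclude $x^k\in X^*$ one must take $\esp[\cdot|\alg_k]$ in the inequality $\langle\widehat F(\xi^k,x^k),x-x^k\rangle\ge0$ and use $x^k\in\alg_k$, $\xi^k\perp\perp\alg_k$ and unbiasedness to replace $\widehat F(\xi^k,x^k)$ by $T(x^k)$. This is exactly what the paper does in its first paragraph; it is easy but not ``read off directly from the stopping rule,'' so you should include it.
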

\begin{proof}
If \textsf{Algorithm \ref{algorithm:DSSA:extragradient}} stops at iteration $k$, then 
$
x^k=\Pi[x^k-\hat\alpha\widehat F(\xi^k,x^k)].
$
From this fact and Lemma \ref{lemma:proj}(iv) we get, for all $x\in X$,
\begin{equation}\label{thm:convergence:armijo:eq0}
\langle \widehat F(\xi^k,x^k),x-x^k\rangle\ge0.
\end{equation}
From the facts that $x^k\in\alg_k$ and $\xi^k\perp\perp\alg_k$, Assumption \ref{assump:iid:sampling} and \eqref{equation:expected:valued:objective}, we have that 
$\esp\left[\widehat F(\xi^k,x^k)|\alg_k\right]=T(x^k)$. Using this result and the fact that $x^k\in\alg_k$, we take $\esp[\cdot|\alg_k]$ in \eqref{thm:convergence:armijo:eq0} and obtain, for all $x\in X$, $\langle T(x^k),x-x^k\rangle\ge0$. Hence $x^k\in X^*$.

Suppose now that \textsf{Algorithm \ref{algorithm:DSSA:extragradient}} generates an infinite sequence. 
Take some $x^*\in X^*$. Taking into account $\sum_k N_k^{-1}<\infty$, Proposition \ref{prop:fejer:armijo} for a general $X$ ($\mathsf{a}:=2$) and the fact that $x^k\in\alg_k$, we apply Theorem \ref{thm:rob} with $y_k:=\Vert x^k-x^*\Vert^2$, $a_k:=\frac{\mathsf{\overline{C}}_{2}(\hat\alpha\overline{L}_{4})^2}{{N}_k}$, $b_k:=\frac{\mathsf{C}_{2}[\hat\alpha\sigma_4(x^*)]^2}{{N}_k}$ and $u_k:=\frac{(1-6\lambda^2)[(\lambda\theta)\wedge\hat\alpha]^2}{2\Lnorm{\mathsf{L}(\xi)}^2}r(x^k)^2$, in order to conclude that a.s. $\{\Vert x^k-x^*\Vert^2\}$ converges and $\sum_{k=0}^\infty r(x^k)^2<\infty$. In particular, a.s. $\{x^k\}$ is bounded and
\begin{equation}\label{thm:convergence:armijo:eq4}
0 = \lim_{k\rightarrow\infty}r(x^k)^2\\
=\lim_{k\rightarrow\infty}\left\Vert x^k-\Pi\left[x^k-T(x^k)\right]\right\Vert^2.
\end{equation}
The fact that $\lim_{k\rightarrow\infty}\esp[r(x^k)^2]=0$ is proved in a similar way, taking expectation in the recursion of Proposition \ref{prop:fejer:armijo}.

Relation \eqref{thm:convergence:armijo:eq4}
and the continuity of $T$ (Lemma \ref{lemma:holder:continuity:mean:std:dev}) and $\Pi$ (Lemma \ref{lemma:proj}(iii)) imply that a.s. every  cluster point $\bar x$ of $\{x^k\}$ satisfies 
$
0=\bar x-\Pi\left[\bar x-T(\bar x)\right].
$
From Lemma \ref{lemma:proj}(iv), we conclude that $\bar x\in X^*$. A.s. the boundedness 
of $\{x^k\}$ and the fact that every cluster point of $\{x^k\}$ 
belongs to $X^*$ imply that $\lim_{k\rightarrow\infty}\dist(x^k,X^*)=0$. 
\end{proof}

\subsection{Convergence rate and oracle complexity}
As mentioned in the Introduction, we allow $X$ to be unbounded and the SO may not have an uniformly bounded variance over $X$. In this setting, it is not possible to infer a priori the boundedness of the sequence $\left\{\Lnorm{\Vert x^k\Vert}\right\}$ (i.e., $\mathcal{L}^2$-boundedness of the iterates). In this section, we obtain such $\mathcal{L}^2$-boundedness when using DS-SA schemes. This will be essential to obtain complexity estimates.

\begin{proposition}[$\mathcal{L}^2$-boundedness of the iterates: unbounded case]\label{prop:l2:boundedness}
Let Assumptions \ref{assumption:holder:continuity} and \ref{assump:existence}-\ref{assump:iid:sampling} hold and recall definitions in \textsf{\emph{Algorithm \ref{algorithm:DSSA:extragradient}}}, \eqref{equation:oracle:error}-\eqref{equation:oracle:error:variance}, Theorem \ref{thm:variance:error:with:line:search} and Proposition \ref{prop:A:armijo} with $p=2$. Let $x^*\in X^*$ and choose $k_0:=k_0(\overline{\mathsf{C}}_2,\hat\alpha\overline{L}_4)\in\mathbb{N} $ and $\phi\in(0,1)$ such that 
\begin{equation}\label{prop:l2:boundedness:k0}
\sum_{i\ge k_0}^\infty\frac{1}{N_i}\le\frac{\phi}{\overline{\mathsf{C}}_2\left(\hat\alpha\overline{L}_4\right)^2}.
\end{equation}
Then
$
\sup_{k\ge k_0}\Lnorm{\Vert x^k-x^*\Vert}^2<\frac{\Lnorm{\Vert x^{k_0}-x^*\Vert}^2+\frac{\phi\mathsf{C}_2\sigma_4(x^*)^2}{\overline{\mathsf{C}}_2\overline{L}_4^2}}{1-\phi}.
$
\end{proposition}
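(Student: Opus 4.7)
The plan is to reduce the statement to a deterministic scalar recursion and then apply an elementary product-vs-sum inequality.

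First, I would take total expectation in the recursion of Proposition \ref{prop:fejer:armijo} specialized to $\mathsf{a}=2$ (so $D_k = \|x^k - x^*\|$), drop the nonpositive term $-\nu\, r(x^k)^2$ (which only helps), and use the tower property together with $x^k \in \alg_k$ to obtain the deterministic recursion
\begin{equation*}
a_{k+1} \le (1+\beta_k)\, a_k + \gamma_k,
\end{equation*}
where $a_k := \Lnorm{\|x^k - x^*\|}^2$, $\beta_k := \mathsf{\overline{C}}_2(\hat\alpha \overline{L}_4)^2/N_k$ and $\gamma_k := \mathsf{C}_2 [\hat\alpha \sigma_4(x^*)]^2/N_k$.

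Next I would iterate this recursion from $k_0$ up to any $k > k_0$, which yields
\begin{equation*}
a_k \le a_{k_0}\prod_{i=k_0}^{k-1}(1+\beta_i) + \sum_{j=k_0}^{k-1}\gamma_j\prod_{i=j+1}^{k-1}(1+\beta_i) \le \Bigl[a_{k_0} + \sum_{j=k_0}^{k-1}\gamma_j\Bigr]\prod_{i=k_0}^{k-1}(1+\beta_i).
\end{equation*}
Using the hypothesis \eqref{prop:l2:boundedness:k0}, one checks that $\sum_{i\ge k_0}\beta_i \le \phi$ and $\sum_{i\ge k_0}\gamma_i \le \phi\mathsf{C}_2\sigma_4(x^*)^2/(\overline{\mathsf{C}}_2 \overline{L}_4^2) =: \Gamma$, by factoring the $\hat\alpha^2$'s out.

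The key step is then the elementary inequality: whenever $\{\beta_i\}$ are nonnegative reals with $\sum_i \beta_i < 1$, one has $\prod_i(1+\beta_i) \le 1/(1-\sum_i \beta_i)$. This follows from $\ln(1+\beta_i)\le \beta_i$ and $-\ln(1-s)\ge s$ for $s\in(0,1)$, applied to $s := \sum_i\beta_i$. Applying this with $s = \sum_{i\ge k_0}\beta_i \le \phi$ gives $\prod_{i=k_0}^{k-1}(1+\beta_i) \le 1/(1-\phi)$ for every $k$, and hence
\begin{equation*}
a_k \le \frac{a_{k_0} + \Gamma}{1-\phi},\qquad k\ge k_0,
\end{equation*}
which is exactly the claimed bound after substituting $\Gamma$. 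The main (and only nontrivial) obstacle is verifying that the product inequality applies uniformly in $k$; this is handled by the uniform summability bound $\phi < 1$ guaranteed by \eqref{prop:l2:boundedness:k0}. The strict inequality in the statement reflects that for every finite $k$ both the partial product and partial sum are strictly smaller than their tail-extended counterparts, so $a_k$ never saturates the upper bound.
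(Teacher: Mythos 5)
Your proof is correct, and it reaches the same constant as the paper by a genuinely different route. The paper does not unroll the recursion multiplicatively: after summing the expected recursion from $k_0$ to $k-1$ it keeps the implicit inequality $a_k\le a_{k_0}+\sum_{i=k_0}^{k-1}\beta_i a_i+\sum_{i=k_0}^{k-1}\gamma_i$ and closes it with a hitting-time argument --- defining the first index $\tau_a$ at which the sequence exceeds a level $a^2$, bounding all earlier terms by $a^2$, and deducing $a^2<\Lnorm{\|x^{k_0}-x^*\|}^2+\phi a^2+\Gamma$, so that any level ever exceeded is below $B:=(a_{k_0}+\Gamma)/(1-\phi)$. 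You instead iterate the recursion explicitly and invoke the discrete Gr\"onwall-type bound $\prod_i(1+\beta_i)\le e^{\sum_i\beta_i}\le 1/(1-\sum_i\beta_i)$, which is valid since $\sum_{i\ge k_0}\beta_i\le\phi<1$ by \eqref{prop:l2:boundedness:k0}; your factorization of $\hat\alpha^2$ to identify $\Gamma$ is also right. Your route is more direct (no contradiction, no threshold), while the paper's avoids the product inequality at the cost of the self-referential stopping-index device; both yield exactly the bound $(a_{k_0}+\Gamma)/(1-\phi)$ uniformly in $k\ge k_0$. One small remark: your closing justification of the \emph{strict} inequality is loose (in the degenerate case $a_{k_0}=\Gamma=0$ the strict bound fails), but the paper's own argument really only delivers $\sup_{k\ge k_0}a_k\le B$ as well, so this is not a gap relative to the paper's proof.
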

\begin{proof}
In the following, we set $d_i:=\Vert x^i-x^*\Vert^2$ for $i\in\mathbb{N}_0$. Let $k>k_0$ in $\mathbb{N}_0$ with $k_0$ as stated in \eqref{prop:l2:boundedness:k0}. Note that such $k_0$ always exists since $\sum_kN_k^{-1}<\infty$ by Assumption \ref{assump:iid:sampling}. Consider the recursion of Proposition \ref{prop:fejer:armijo} for the case $X$ is unbounded ($\mathsf{a}:=2$). We take the expectation, use $\esp[\esp[\cdot|\alg_i]]=\esp[\cdot]$ and drop the negative term in the right hand side. We then sum recursively the obtained inequality from $i:=k_0$ to $i:=k-1$, obtaining
\begin{equation}\label{prop:l2:boundedness:eq1}
\Lnorm{d_k}^2\le\Lnorm{d_{k_0}}^2+\mathsf{\overline{C}}_2\left(\hat\alpha\overline{L}_4\right)^2\sum_{i=k_0}^{k-1}\frac{\Lnorm{d_i}^2}{N_i}+\mathsf{C}_2\left[\hat\alpha\sigma_4(x^*)\right]^2\sum_{i=k_0}^{k-1}\frac{1}{N_i}.
\end{equation} 

For any $a>0$, we define the stopping time 
$
\tau_a:=\{k\ge k_0:\Lnorm{d_k}>a\}.
$
From \eqref{prop:l2:boundedness:k0}-\eqref{prop:l2:boundedness:eq1} and definition of $\tau_a$, we have that, for any $a>0$ such that $\tau_a<\infty$,
\begin{eqnarray*}
a^2<\Lnorm{d_{\tau_a}}^2&\le &\Lnorm{d_{k_0}}^2+\mathsf{\overline{C}}_2\left(\hat\alpha\overline{L}_4\right)^2\sum_{i=k_0}^{\tau_a-1}\frac{\Lnorm{d_i}^2}{N_i}+\mathsf{C}_2\left[\hat\alpha\sigma_4(x^*)\right]^2\sum_{i=k_0}^{\tau_a-1}\frac{1}{N_i}\\
&<&\Lnorm{d_{k_0}}^2+\phi a^2+\frac{\phi\mathsf{C}_2\sigma_4(x^*)^2}{\overline{\mathsf{C}}_2\overline{L}_4^2},
\end{eqnarray*}
and hence,
$%\label{prop:l2:boundedness:eq2}
a^2<\frac{\Lnorm{d_{k_0}}^2+\frac{\phi\mathsf{C}_2\sigma_4(x^*)^2}{\overline{\mathsf{C}}_2\overline{L}_4^2}}{1-\phi}=:B,	 
$
where we used that $\phi\in(0,1)$. By definition of $\tau_a$ for any $a>0$, the argument above implies that any threshold $a^2$ which the sequence $\{\Lnorm{d_k}^2\}_{k\ge k_0}$ eventually exceeds is bounded above by $B$. Hence $\{\Lnorm{d_k}^2\}_{k\ge k_0}$ is bounded and it satisfies the statement of the proposition.  
\end{proof}

We now obtain a rate of convergence. 
\begin{theorem}[Rate of convergence]\label{thm:rate:convergence}
Consider Assumptions \ref{assumption:holder:continuity} and \ref{assump:existence}-\ref{assump:iid:sampling} and recall definitions in \textsf{\emph{Algorithm \ref{algorithm:DSSA:extragradient}}}, \eqref{equation:oracle:error}-\eqref{equation:oracle:error:variance} and Proposition \ref{prop:A:armijo} with $p=2$. Set
\begin{equation}\label{thm:rate:convergence:Nk}
N_k:=N\left\lceil(k+\mu)(\ln(k+\mu))^{1+b}\right\rceil,
\end{equation}
for any $N\in\mathbb{N}$, $b>0$ and $\mu>2$. Then Theorem \ref{thm:convergence:armijo} holds and the sequence $\{x^k\}$ generated by \textsf{\emph{Algorithm \ref{algorithm:DSSA:extragradient}}} is bounded in $\mathcal{L}^2$. Moreover, for any $x^*\in X^*$, if $\mathsf{J}>0$ is such that
$
\sup_{k\ge0}{\Lnorm{\Vert x^k-x^*\Vert}^2}\le\mathsf{J},
$
the following bound holds for all $k\in\mathbb{N}_0$:
\begin{eqnarray*}
\min_{i=0,\ldots,k}\esp\left[r(x^i)^2\right]
\le\frac{\left\{\frac{2\Lnorm{\mathsf{L}(\xi)}^2}{(1-6\lambda^2)[(\lambda\theta)\wedge\hat\alpha]^2}\right\}}{k+1}\left\{\Vert x^0-x^*\Vert^2+\frac{\mathsf{C}_2[\hat\alpha\sigma_{2\mathsf{a}}(x^*)]^2+\mathsf{\overline{C}}_2\left(\hat\alpha\widetilde L_2\right)^2\mathsf{J}}{Nb[\ln(\mu-1)]^b}\right\}. 
\end{eqnarray*}
\end{theorem}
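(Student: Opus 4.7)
The strategy is to verify the summability of $\{N_k^{-1}\}$, use it to invoke Proposition~\ref{prop:l2:boundedness} for $\mathcal{L}^2$-boundedness, and then telescope the supermartingale-like recursion of Proposition~\ref{prop:fejer:armijo}, combining everything with a single Bertrand-type integral estimate.

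First I would observe that with the prescribed schedule, $\sum_k N_k^{-1}$ is a Bertrand series: the substitution $v = \ln u$ reduces its tail to $\int v^{-(1+b)} dv$, yielding both summability (so Assumption~\ref{assump:iid:sampling} is in force and Theorem~\ref{thm:convergence:armijo} applies) and the explicit bound
\begin{equation}
\sum_{k=0}^{\infty} \frac{1}{N_k} \;\le\; \frac{1}{N}\int_{-1}^{\infty}\frac{du}{(u+\mu)[\ln(u+\mu)]^{1+b}} \;=\; \frac{1}{N\,b\,[\ln(\mu-1)]^b}, \nonumber
\end{equation}
where the hypothesis $\mu > 2$ guarantees $\ln(\mu-1) > 0$.

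Second, since the tails $\sum_{i \ge k_0} N_i^{-1} \to 0$ as $k_0 \to \infty$, for any fixed $\phi \in (0,1)$ one can pick $k_0$ satisfying~\eqref{prop:l2:boundedness:k0}; Proposition~\ref{prop:l2:boundedness} then gives $\sup_{k \ge k_0} \Lnorm{\|x^k - x^*\|}^2 < \infty$ in the unbounded case. The finitely many initial iterates $x^0,\ldots,x^{k_0-1}$ are each square-integrable by Assumption~\ref{assumption:holder:continuity} (and the compact case is immediate), yielding global $\mathcal{L}^2$-boundedness and thereby a finite $\mathsf{J}$.

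Third, I would take unconditional expectations in Proposition~\ref{prop:fejer:armijo}, using $\esp[D_k^2] \le \mathsf{J}$ in both regimes ($\diam(X)^2 \le \mathsf{J}$ when $X$ is compact). Rearranging,
\begin{equation}
\nu \,\esp[r(x^i)^2] \;\le\; \esp[\|x^i - x^*\|^2] - \esp[\|x^{i+1} - x^*\|^2] + \frac{\mathsf{C}_2[\hat\alpha\,\sigma_{2\mathsf{a}}(x^*)]^2 + \bar{\mathsf{C}}_2(\hat\alpha\,\widetilde L_2)^2\,\mathsf{J}}{N_i}, \nonumber
\end{equation}
with $\nu = (1-6\lambda^2)[(\lambda\theta)\wedge\hat\alpha]^2 / (2\Lnorm{\mathsf{L}(\xi)}^2)$. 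Summing $i = 0, \ldots, k$, telescoping, discarding the negative terminal $-\esp[\|x^{k+1}-x^*\|^2]$, using the trivial bound $\min_{i \le k} \esp[r(x^i)^2] \le (k+1)^{-1}\sum_{i=0}^k \esp[r(x^i)^2]$, and substituting the Bertrand tail estimate from the first step delivers the advertised inequality after clearing $\nu^{-1}$.

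\textbf{Main obstacle.} The argument is mostly bookkeeping, so the only delicate quantitative ingredient is the sharp Bertrand-series estimate: the logarithmic factor $[\ln(k+\mu)]^{1+b}$ with $b > 0$ is precisely what makes $\sum N_k^{-1}$ summable at the borderline, and it is the same factor that produces $Nb[\ln(\mu-1)]^b$ in the denominator of the noise term. All other elements are direct consequences of results already established in the paper.
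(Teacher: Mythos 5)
Your proposal is correct and follows essentially the same route as the paper: verify the Bertrand-series summability with the same integral bound $\sum_i N_i^{-1}\le\frac{1}{Nb[\ln(\mu-1)]^b}$, invoke Proposition~\ref{prop:l2:boundedness} for $\mathcal{L}^2$-boundedness, then take expectations in Proposition~\ref{prop:fejer:armijo}, telescope, and use $\min_{i\le k}\esp[r(x^i)^2]\le(k+1)^{-1}\sum_{i=0}^k\esp[r(x^i)^2]$. The only cosmetic difference is that you spell out the square-integrability of the first $k_0$ iterates, which the paper leaves implicit.
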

\begin{proof}
Clearly, $\{N_k\}$ satisfies Assumption \ref{assump:iid:sampling} and, hence, Theorem \ref{thm:convergence:armijo} and Proposition \ref{prop:l2:boundedness} hold. In particular, $\{x^k\}$ is bounded in $\mathcal{L}^2$. Let $x^*\in X^*$ and $\mathsf{J}$ as stated in the theorem. Hence, $\sup_k\esp[D_k^2]\le\mathsf{J}$. In the recursion of Proposition \ref{prop:fejer:armijo}, we take the expectation, use $\esp[\esp[\cdot|\alg_i]]=\esp[\cdot]$ and sum recursively the obtained inequality from $i:=0$ to $i:=k$. We then obtain
$$
\frac{(1-6\lambda^2)[(\lambda\theta)\wedge\hat\alpha]^2}{2\Lnorm{\mathsf{L}(\xi)}^2}\sum_{i=0}^k\esp\left[r(x^i)^2\right]
\le\Vert x^0-x^*\Vert^2+\left\{\mathsf{C}_2[\hat\alpha\sigma_{2\mathsf{a}}(x^*)]^2+\mathsf{\overline{C}}_2\left(\hat\alpha\widetilde L_2\right)^2\mathsf{J}\right\}\mathsf{S}_k,
$$
where $\mathsf{S}_k:=\sum_{i=0}^kN_i^{-1}$. The proof of the statement follows from the above inequality, the bound
\begin{eqnarray*}
\mathsf{S}_k\le\sum_{i=0}^\infty\frac{1}{N_i}\le\int_{-1}^\infty\frac{\dist t}{N(t+\mu)[\ln(t+\mu)]^{1+b}}=\frac{1}{Nb[\ln(\mu-1)]^b},
\end{eqnarray*} 
and $\min_{i=0,\ldots,k}\esp\left[r(x^i)^2\right]\le\frac{1}{k+1}\sum_{i=0}^k\esp\left[r(x^i)^2\right]$.
\end{proof}

A near optimal oracle complexity is guaranteed in the next corollary of Theorem \ref{thm:rate:convergence}.
\begin{corollary}[Iteration and oracle complexities]\label{cor:oracle:complexity}
Let the assumptions of Theorem \ref{thm:rate:convergence} hold and set $N:=\mathcal{O}(d)$. Given $\epsilon>0$, \emph{\textsf{Algorithm \ref{algorithm:DSSA:extragradient}}} achieves the tolerance 
$$
\min_{i=0,\ldots,K}\esp[r(x^i)^2]\le\epsilon,
$$ 
after $K=b^{-1}\mathcal{O}(\epsilon^{-1})$ iterations and with a.s. an oracle complexity 
$\sum_{i=0}^K(1+\ell_i)N_i$ 
bounded above by
$$
b^{-2}\cdot\log_{\frac{1}{\theta}}\left(\frac{\hat\alpha\max_{i=0,\ldots,K}\widehat{L}_i}{(\lambda\theta)\wedge\hat\alpha}\right)\cdot\left[\ln\left(b^{-1}\epsilon^{-1}\right)\right]^{1+b}\cdot\mathcal{O}(d\epsilon^{-2}),
$$
where $\ell_k$ is the number of oracle calls used in the line search scheme \eqref{algo:armijo:rule} at  iteration $k$ and $\widehat{L}_k$ is defined in Lemma \ref{lemma:step:lower}. 
Moreover, the mean oracle complexity satisfies the same upper bound above with $\max_{i=0,\ldots,K}\widehat{L}_i$ replaced by $L$.
\end{corollary}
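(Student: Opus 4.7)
The plan is to compose three ingredients already stated in the excerpt: the rate bound of Theorem~\ref{thm:rate:convergence}, the backtracking bound of Lemma~\ref{lemma:step:lower}, and an elementary tail estimate for $\sum_i N_i$.

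First I would translate the rate bound into an iteration complexity. Theorem~\ref{thm:rate:convergence} yields, with $N:=\mathcal{O}(d)$,
\[
\min_{i=0,\ldots,K}\esp[r(x^i)^2]\;\le\;\frac{C_1}{K+1}\Bigl(\|x^0-x^*\|^2+\frac{C_2}{Nb[\ln(\mu-1)]^b}\Bigr),
\]
for absolute constants $C_1,C_2$ absorbing $\mathsf{C}_2[\hat\alpha\sigma_{2\mathsf{a}}(x^*)]^2+\mathsf{\overline C}_2(\hat\alpha\widetilde L_2)^2\mathsf{J}$ and $2\Lnorm{\mathsf{L}(\xi)}^2/\{(1-6\lambda^2)[(\lambda\theta)\wedge\hat\alpha]^2\}$. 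Requiring this to be $\le\epsilon$ forces $K+1\ge b^{-1}\mathcal{O}(\epsilon^{-1})$, which gives the iteration bound.

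Next I would extract the bound on the number $\ell_i$ of backtracking trials. Lemma~\ref{lemma:step:lower} already gives $\alpha_i\ge\frac{\lambda\theta}{\widehat L_i}\wedge\hat\alpha$ a.s., and since $\alpha_i=\theta^{\ell_i}\hat\alpha$ by construction, rearranging yields
\[
\ell_i\;\le\;\log_{1/\theta}\!\left(\frac{\hat\alpha\widehat L_i}{(\lambda\theta)\wedge\hat\alpha}\right)\qquad\text{a.s.}
\]
Hence $1+\ell_i\lesssim\log_{1/\theta}(\hat\alpha\max_{j\le K}\widehat L_j/[(\lambda\theta)\wedge\hat\alpha])$ uniformly in $i\in\{0,\ldots,K\}$.

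For the sample budget I would compare $\sum_{i=0}^K N_i$ with an integral. Using $N_k\le 2N(k+\mu)[\ln(k+\mu)]^{1+b}$ for the rate policy \eqref{thm:rate:convergence:Nk},
\[
\sum_{i=0}^K N_i\;\lesssim\;N\int_0^{K+1}(t+\mu)[\ln(t+\mu)]^{1+b}\,\dist t\;\lesssim\;N K^2[\ln K]^{1+b}.
\]
Substituting $K=b^{-1}\mathcal{O}(\epsilon^{-1})$ and $N=\mathcal{O}(d)$ and pulling the $\log_{1/\theta}$-factor out of the sum gives the stated a.s. bound
\[
\sum_{i=0}^K(1+\ell_i)N_i\;\le\;\log_{1/\theta}\!\left(\tfrac{\hat\alpha\max_{j\le K}\widehat L_j}{(\lambda\theta)\wedge\hat\alpha}\right)\cdot b^{-2}[\ln(b^{-1}\epsilon^{-1})]^{1+b}\cdot\mathcal{O}(d\epsilon^{-2}).
\]

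Finally, for the mean version I would take expectations and push them inside the $\log_{1/\theta}$ via Jensen's inequality: since $\log_{1/\theta}(\cdot)$ is concave and monotone, $\esp[\log_{1/\theta}(\hat\alpha\widehat L_i/((\lambda\theta)\wedge\hat\alpha))]\le\log_{1/\theta}(\hat\alpha\,\esp[\widehat L_i]/((\lambda\theta)\wedge\hat\alpha))$, and by i.i.d.\ sampling $\esp[\widehat L_i]=L$ for every $i$. Summing iteration by iteration then produces the same upper bound with $\max_{i\le K}\widehat L_i$ replaced by $L$. The steps are all essentially bookkeeping; the only mildly subtle point is the Jensen argument for the mean complexity, since it relies on recognising that the $\ell_i$ bound is deterministic in $\widehat L_i$ and therefore pointwise, which permits term-by-term expectation before summing.
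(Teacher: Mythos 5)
Your proposal is correct and follows essentially the same route as the paper's own proof: invert the rate bound of Theorem \ref{thm:rate:convergence} to get $K=b^{-1}\mathcal{O}(\epsilon^{-1})$, bound $\ell_i$ via Lemma \ref{lemma:step:lower} as $\ell_i\le\log_{1/\theta}\bigl(\hat\alpha\widehat L_i/((\lambda\theta)\wedge\hat\alpha)\bigr)$, estimate $\sum_{i\le K}N_i\lesssim NK^2(\ln K)^{1+b}$, and pass to the mean via Jensen's inequality together with $\esp[\widehat L_i]=L$. The only cosmetic difference is that the paper tracks the $d$-dependence explicitly through the constant $B$ in $\min_i\esp[r(x^i)^2]\le Bd(Nbk)^{-1}$ before setting $N=\mathcal{O}(d)$, whereas you absorb it into your constants; the conclusions coincide.
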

\begin{proof}
We recall the definitions in Assumption \ref{assumption:holder:continuity}, Theorem \ref{thm:variance:error:with:line:search}, Lemma \ref{lemma:decay:empirical:error}, Remark \ref{rem:constants:thm:correlated:error}, Proposition \ref{prop:A:armijo} and Remark \ref{rem:constants:A:armijo}. The definitions of $\widetilde L_2$, $\overline{L}_4$, $L_2^*$, $\mathsf{c}_2$ and $\mathsf{d}_2$ (which depend on $d$) and Theorem \ref{thm:rate:convergence} imply that, up to a constant $B>0$, for every $k\in\mathbb{N}$, $\min_{i=0,\ldots,k}\esp[r(x^i)^2]\le Bd(Nbk)^{-1}$. Given $\epsilon>0$, let $K$ be the least natural number such that $Bd(Nbk)^{-1}\le\epsilon$. Then $K=\mathcal{O}(dN^{-1}b^{-1}\epsilon^{-1})$, the total number of oracle calls is
\begin{eqnarray}
\sum_{i=0}^K(1+\ell_i)N_i&\lesssim &\sum_{i=1}^K N i(\ln i)^{1+b}\lesssim\left(\max_{i=0,\ldots,K}\ell_i\right)N K^2(\ln K)^{1+b}\nonumber\\
&\lesssim & \left(\max_{i=0,\ldots,K}\ell_i\right)N^{-1}d^2 b^{-2}\epsilon^{-2}\left[\ln\left(dN^{-1}b^{-1}\epsilon^{-1}\right)\right]^{1+b},\label{cor:oracle:complexity:eq1}
\end{eqnarray}
and $\min_{i=0,\ldots,K}\esp[r(x^i)^2]\le\epsilon$. Lemma \ref{lemma:step:lower} implies that $\ell_k\le\log_{\frac{1}{\theta}}\left(\frac{\hat\alpha\widehat{L}_k}{(\lambda\theta)\wedge\hat\alpha}\right)$. This fact, \eqref{cor:oracle:complexity:eq1} and $N=\mathcal{O}(d)$ imply the claimed bound on $\sum_{i=0}^K(1+\ell_i)N_i$. The concavity of $t\mapsto\log_{\frac{1}{\theta}}t$ and Jensen's inequality imply
\begin{eqnarray*}
\esp[\ell_k]\le\esp\left[\log_{\frac{1}{\theta}}\left(\frac{\hat\alpha\widehat{L}_k}{(\lambda\theta)\wedge\hat\alpha}\right)\right]\le \log_{\frac{1}{\theta}}\left(\frac{\hat\alpha L}{(\lambda\theta)\wedge\hat\alpha}\right),
\end{eqnarray*}
where we used that $\esp[\widehat{L}_k]=L$ by definitions of $\widehat{L}_k$ and $L$ and Assumption \ref{assump:iid:sampling}. The above relation, \eqref{cor:oracle:complexity:eq1} and $N:=\mathcal{O}(d)$ imply the claimed bound on the mean oracle complexity $\sum_{i=0}^K(1+\esp[\ell_i])N_i$. 
\end{proof}

\begin{remark}[Linear memory budget per operation]
Recall that $N:=\mathcal{O}(d)$ in Corollary \ref{cor:oracle:complexity}. This policy requires the computation of the sum \eqref{equation:empirical:average:DSSA:extragradient} of size $N_k\sim dk$ (up to logs) of $d$-dimensional vectors at iteration $k$. For large $d$, such computation is still cheap in terms of memory budget \emph{per operation}: the sum \eqref{equation:empirical:average:DSSA:extragradient} can be computed serially in $k$ steps, each one requiring the storage of just two $d$-dimensional vectors. Hence, it requires memory of $\mathcal{O}(d)$ per operation. It can also be easily parallelized.
\end{remark}

\begin{remark}[Radius estimate for unbounded $X$]
By Proposition \ref{prop:l2:boundedness}, the constant $\mathsf{J}$ in Theorem \ref{thm:rate:convergence} can be estimated by
\begin{equation}\label{equation:J}
\mathsf{J}\le\frac{\max_{k=0,\ldots,k_0}\Lnorm{\Vert x^k-x^*\Vert}^2+\frac{\phi\mathsf{C}_2\sigma_4(x^*)^2}{\mathsf{\overline{C}}_2\overline L_4^2}}{1-\phi}\lesssim\max_{k=0,\ldots,k_0}\Lnorm{\Vert x^k-x^*\Vert}^2+\frac{\sigma_4(x^*)^2}{\Lqrtnorm{\mathsf{L}(\xi)}^2},
\end{equation}
using the fact that $1-\phi\in(0,1)$ and the constant definitions in Assumption \ref{assumption:holder:continuity}, Theorem \ref{thm:variance:error:with:line:search}, Lemma \ref{lemma:decay:empirical:error} and Remarks \ref{rem:constants:thm:correlated:error} and \ref{rem:constants:A:armijo} with $p=2$. From \eqref{prop:l2:boundedness:k0} and \eqref{thm:rate:convergence:Nk}, $k_0$ in \eqref{equation:J} can be estimated by
\begin{equation}\label{thm:rate:convergence:k0}
k_0:=\left\lceil\exp\left\{\sqrt[b]{\frac{\mathsf{\overline{C}}_2\left(\hat\alpha\overline{L}_4\right)^2}{\phi b N}}\right\}-\mu+1\right\rceil.
\end{equation}
As discussed in Section \ref{section:conclusion}, the exponential dependence in \eqref{thm:rate:convergence:k0} is not a serious issue. Nevertheless, it can be improved to $k_0\lesssim\sqrt[a]{\mathsf{\overline{C}}_2\left(\hat\alpha\overline{L}_4\right)^2/(\phi b N)}-\mu$ if the sampling policy is taken as $N_k\sim N(k+\mu)^{1+a}(\ln(k+\mu))^{1+b}$ for some $a,b>0$. This come at the expense of an oracle complexity of $[\ln(\epsilon^{-1})]^{1+b}\mathcal{O}(\epsilon^{-(2+a)})$ which is polynomially near optimal (instead of logarithmically as in Corollary \ref{cor:oracle:complexity}).
\end{remark}

\begin{remark}[Boundedness in $\mathcal{L}^p$]\label{rem:lp:boundedness}
Adapting the proofs of Propositions \ref{prop:A:armijo} and \ref{prop:l2:boundedness}, it is possible to prove, in case $X$ is unbounded, that \emph{the sequence $\{x^k\}$ is $\mathcal{L}^p$-bounded} for any given $p\ge4$ satisfying Assumption \ref{assumption:holder:continuity}. This is a significant statistical stability property. The proof requires exploiting that $\Delta M_k(x^*)$ in \eqref{def:M:armijo} is a martingale difference.\footnote{The nonmartingale-like dependency is present only in $\Delta A_k$ via the error $\epsilon^k_3$ in \eqref{equation:oracle:errors:DSSA:extragradient}.}
\end{remark}

\section{Analysis of \textsf{Algorithm \ref{algorithm:DSSA:hyperplane}} for H\"older continuous operators}\label{section:algorithm:hyperplane:DSSA}

With respect to \textsf{Algorithm \ref{algorithm:DSSA:hyperplane}}, we will set $y^k:=x^k-\gamma_k\widehat F(\xi_k,z^k)$ and study the stochastic process $\{x^k\}$ with respect to the filtration
$$
\alg_k=\sigma(x^0,\xi^0,\ldots,\xi^{k-1}).
$$
We will replace Assumption \ref{assump:iid:sampling} by the following one.
\begin{assumption}[I.I.D. sampling]\label{assump:iid:sampling:hyperplane}
In \textsf{Algorithm \ref{algorithm:DSSA:hyperplane}}, the sequence $\{\xi^k_j:k\in\mathbb{N}_0,j\in[N_k]\}$ is an i.i.d. sample drawn from $\probn$ and $\sum_{k=0}^\infty N_k^{-\frac{1}{2}}<\infty$.
\end{assumption}

We also define the oracle errors:
\begin{eqnarray}
\bar\epsilon^k_{1} &:=&\widehat F(\xi^k,x^k)-T(x^k),\label{algo:noise:hyper:sample1}\\
\bar\epsilon^k_{2} &:=&\widehat F(\xi^k,z^k)-T(z^k),\label{algo:noise:hyper:sample2}\\
\bar\epsilon^k_{3} &:=&\widehat F(\xi^k,\widehat z^k)-T(z^k),\label{algo:noise:hyper:sample3}
\end{eqnarray}
where $\widehat z^k:=\bar z^k(\theta^{-1}\alpha_k)$ (see line search \eqref{algo:armijo:rule2} for the definition of $\bar z^k(\alpha)$). We remark that $\overline{\epsilon}^k_2$ and $\overline{\epsilon}^k_3$ are correlated errors in the sense that $z^k$ and $\widehat{z}^k$ are dependent on $\xi^k$. In the setting of Theorem \ref{thm:variance:error:with:line:search}, this means that $z^k=\overline z_{\beta_k}(\xi^k;\alpha_k,x^k)$ and $\widehat z^k=\overline z_{\beta_k}(\xi^k;\theta^{-1}\alpha_k,x^k)$. We start by showing the line search \eqref{algo:armijo:rule2} in \textsf{Algorithm \ref{algorithm:DSSA:hyperplane}} is well defined.
\begin{lemma}[Good definition of the line search]\label{lemma:armijo:hyper:def}
Consider Assumption \ref{assumption:holder:continuity}. Then
\begin{itemize}
\item[i)] The line search \eqref{algo:armijo:rule2} in \emph{\textsf{Algorithm \ref{algorithm:DSSA:hyperplane}}} terminates after a finite number of iterations.
\item[ii)] If \textsf{\emph{Algorithm \ref{algorithm:DSSA:hyperplane}}} does not stop at iteration $k+1$, then 
$
\left\langle\widehat F(\xi^k,z^k),x^k-z^k\right\rangle>0.
$
In particular, $\gamma_k>0$ in \eqref{algo:hyperplane2}. 
\end{itemize}
\end{lemma}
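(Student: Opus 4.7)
The plan is to exploit two standard ingredients: the characterization of the projection operator in Lemma \ref{lemma:proj}(v) to handle the limiting case $\alpha=0$, and the continuity of $\widehat F(\xi^k,\cdot)$ inherited from Assumption \ref{assumption:holder:continuity} to pass from $\alpha=0$ to small positive $\alpha=\theta^\ell\hat\alpha$.

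For part (i), I would first analyze the line search inequality \eqref{algo:armijo:rule2} at the boundary $\alpha\downarrow0$. Since $\overline z^k(\alpha)=\alpha\Pi(g^k)+(1-\alpha)x^k\to x^k$ as $\alpha\to 0$ and $\widehat F(\xi^k,\cdot)$ is H\"older (hence continuous) by Assumption \ref{assumption:holder:continuity}, it suffices to show that the \emph{strict} inequality
$$
\langle \widehat F(\xi^k,x^k),\,x^k-\Pi(g^k)\rangle > \tfrac{\lambda}{\beta_k}\|x^k-\Pi(g^k)\|^2
$$
holds at $\alpha=0$. This is exactly where Lemma \ref{lemma:proj}(v) applies with $x:=x^k\in X$ and $y:=g^k=x^k-\beta_k\widehat F(\xi^k,x^k)$: we get $\langle x^k-g^k,x^k-\Pi(g^k)\rangle\ge\|x^k-\Pi(g^k)\|^2$, i.e.
$$
\langle \widehat F(\xi^k,x^k),x^k-\Pi(g^k)\rangle\ge\tfrac{1}{\beta_k}\|x^k-\Pi(g^k)\|^2.
$$
Since the algorithm did not stop, $x^k\neq\Pi(g^k)$ and the right-hand side is strictly positive; using $\lambda\in(0,1)$ converts the above $\ge$ into a strict gap over $\tfrac{\lambda}{\beta_k}\|x^k-\Pi(g^k)\|^2$. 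Continuity of $\widehat F(\xi^k,\cdot)$ and of $\alpha\mapsto\overline z^k(\alpha)$ then yield that \eqref{algo:armijo:rule2} is satisfied for every sufficiently small $\alpha$, and since $\theta^\ell\hat\alpha\downarrow 0$, the backtracking terminates at some finite $\ell_k$.

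For part (ii), once $\alpha_k$ has been accepted, $z^k=\overline z^k(\alpha_k)$ satisfies \eqref{algo:armijo:rule2}, i.e.
$$
\langle \widehat F(\xi^k,z^k),\,x^k-\Pi(g^k)\rangle\ge\tfrac{\lambda}{\beta_k}\|x^k-\Pi(g^k)\|^2>0,
$$
using again $x^k\neq\Pi(g^k)$. A direct computation gives $x^k-z^k=\alpha_k(x^k-\Pi(g^k))$, so multiplying by $\alpha_k>0$ yields
$$
\langle \widehat F(\xi^k,z^k),x^k-z^k\rangle\ge\tfrac{\lambda\alpha_k}{\beta_k}\|x^k-\Pi(g^k)\|^2>0,
$$
which is the desired inequality. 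In particular, this forces $\widehat F(\xi^k,z^k)\neq 0$, so $\gamma_k$ in \eqref{algo:hyperplane2} is well defined and strictly positive.

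The only mildly delicate point is confirming that the stopping criterion $x^k=\Pi[x^k-\beta_k\widehat F(\xi^k,x^k)]$ is precisely $x^k=\Pi(g^k)$, so that its failure guarantees the strict positivity of $\|x^k-\Pi(g^k)\|$ used twice above; the rest is routine projection calculus and continuity.
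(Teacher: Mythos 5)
Your proof is correct and uses essentially the same ingredients as the paper's: Lemma \ref{lemma:proj}(v) applied at $g^k=x^k-\beta_k\widehat F(\xi^k,x^k)$, continuity of $\widehat F(\xi^k,\cdot)$ from Assumption \ref{assumption:holder:continuity}, the fact that non-stopping means $x^k\neq\Pi(g^k)$, and $\lambda<1$; the paper merely phrases part (i) as a contradiction argument (letting $\ell\to\infty$ in the negated inequality to deduce $\lambda\ge1$) whereas you argue directly at $\alpha=0$, and your explicit derivation of part (ii) via $x^k-z^k=\alpha_k(x^k-\Pi(g^k))$ fills in what the paper states as a direct consequence of (i).
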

\begin{proof}
Item (ii) is a direct consequence of (i). 
We prove next item (i). Assume by contradiction that for every $\ell\in\mathbb{N}_0$,
$$
\left\langle \beta_k\widehat F\Big(\xi^k,z^k\left(\theta^{-\ell}\widehat\alpha\right)\Big),x^k-\Pi(g^k)\right\rangle<\lambda\Vert x^k-\Pi(g^k)\Vert^2.
$$
We let $\ell\rightarrow\infty$ above and by continuity of $\widehat F(\xi^k,\cdot)$, resulting from 
Assumption \ref{assumption:holder:continuity}, we obtain
$$
\lambda\Vert x^k-\Pi(g^k)\Vert^2\ge\langle x^k-g^k,x^k-\Pi(g^k)\rangle\ge\Vert x^k-\Pi(g^k)\Vert^2,
$$
using Lemma \ref{lemma:proj}(v) in the last inequality. Since we have $x^k\neq\Pi(g^k)$ by the  definition of the method, we obtain that $\lambda\ge1$, a contradiction.
\end{proof}
	
The following Lemma is also proved in the Appendix.
\begin{lemma}\label{lemma:recursion:hyper}
Consider Assumptions \ref{assump:existence}-\ref{assump:monotonicity} and \eqref{algo:noise:hyper:sample2}.
Suppose that \textsf{\emph{Algorithm \ref{algorithm:DSSA:hyperplane}}} does not stop at iteration $k+1$. Then, for all $x^*\in X^*$, 
\begin{equation*}
\Vert x^{k+1}-x^*\Vert^2\le\Vert x^{k}-x^*\Vert^2-\Vert y^k-x^k\Vert^2+2\gamma_k\langle\bar\epsilon^k_2,x^*-z^k\rangle.
\end{equation*}
\end{lemma}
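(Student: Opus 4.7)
My plan is to follow the classical Korpelevich/Iusem-Svaiter hyperplane-method proof and insert the stochastic error at the end via Assumption \ref{assump:monotonicity}. First I will apply Lemma \ref{lemma:proj}(ii) with $x:=y^k$ and $y:=x^*$ (valid because $x^*\in X$ by Assumption \ref{assump:existence} and $x^{k+1}=\Pi(y^k)$ by \eqref{algo:hyperplane2}) to obtain
\begin{equation*}
\Vert x^{k+1}-x^*\Vert^2+\Vert x^{k+1}-y^k\Vert^2\le\Vert y^k-x^*\Vert^2.
\end{equation*}
Then I will drop the nonnegative term $\Vert x^{k+1}-y^k\Vert^2$ and expand the right-hand side using $y^k-x^*=(x^k-x^*)+(y^k-x^k)$:
\begin{equation*}
\Vert y^k-x^*\Vert^2=\Vert x^k-x^*\Vert^2+\Vert y^k-x^k\Vert^2+2\langle y^k-x^k,x^k-x^*\rangle.
\end{equation*}

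Next, I will exploit the specific choice of $\gamma_k$. Since $y^k-x^k=-\gamma_k\widehat F(\xi^k,z^k)$, the definition
$\gamma_k=\langle\widehat F(\xi^k,z^k),x^k-z^k\rangle/\Vert\widehat F(\xi^k,z^k)\Vert^2$
yields the crucial identity $\Vert y^k-x^k\Vert^2=\gamma_k\langle\widehat F(\xi^k,z^k),x^k-z^k\rangle$. Writing $x^k-x^*=(x^k-z^k)+(z^k-x^*)$ in the cross term and using this identity, the $\langle\widehat F(\xi^k,z^k),x^k-z^k\rangle$ contributions collapse so that
\begin{equation*}
\Vert y^k-x^*\Vert^2=\Vert x^k-x^*\Vert^2-\Vert y^k-x^k\Vert^2+2\gamma_k\langle\widehat F(\xi^k,z^k),x^*-z^k\rangle,
\end{equation*}
where I have also used that by Lemma \ref{lemma:armijo:hyper:def}(ii) one has $\gamma_k>0$.

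Finally I will deal with the last inner product. Using \eqref{algo:noise:hyper:sample2}, split $\widehat F(\xi^k,z^k)=T(z^k)+\bar\epsilon^k_2$, giving
\begin{equation*}
2\gamma_k\langle\widehat F(\xi^k,z^k),x^*-z^k\rangle=2\gamma_k\langle T(z^k),x^*-z^k\rangle+2\gamma_k\langle\bar\epsilon^k_2,x^*-z^k\rangle.
\end{equation*}
The term $\langle T(z^k),x^*-z^k\rangle$ is $\le 0$ by pseudo-monotonicity: since $z^k\in X$ (a convex combination, via \eqref{algo:hyperplane1} and $\alpha_k\in(0,1]$, of $x^k\in X$ and $\Pi(g^k)\in X$) and $x^*\in X^*$, we have $\langle T(x^*),z^k-x^*\rangle\ge 0$, and Assumption \ref{assump:monotonicity} gives $\langle T(z^k),z^k-x^*\rangle\ge 0$. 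Combining everything with $\gamma_k>0$ produces the claimed inequality. The only delicate bookkeeping will be the cancellation in the second step and verifying $z^k\in X$ so that pseudo-monotonicity applies; neither is a genuine obstacle.
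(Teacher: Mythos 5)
Your proposal is correct and follows essentially the same route as the paper's proof: project with Lemma \ref{lemma:proj}(ii), expand $\Vert y^k-x^*\Vert^2$, split $x^k-x^*$ through $z^k$, use the definition of $\gamma_k$ to turn the $\langle\widehat F(\xi^k,z^k),x^k-z^k\rangle$ term into $\Vert y^k-x^k\Vert^2$, and discard $2\gamma_k\langle T(z^k),x^*-z^k\rangle\le 0$ via pseudo-monotonicity. The only cosmetic difference is that you carry the vector $y^k-x^k$ where the paper writes $-\gamma_k(T(z^k)+\bar\epsilon^k_2)$ explicitly.
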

	
We now aim at controlling the error term $\gamma_k\langle\bar\epsilon^k_2,x-z^k\rangle$. This term is not a martingale difference, since $z^k$ depends on $\xi^k$. We shall need the following lemma.

\begin{lemma}\label{lemma:gamma}
Suppose that \textsf{\emph{Algorithm \ref{algorithm:DSSA:hyperplane}}} does not stop at iteration $k+1$. Then
\begin{equation}
0<\gamma_k<\frac{\alpha_k\beta_k}{\lambda}\le\frac{\widehat\alpha\beta_k}{\lambda}.
\end{equation}
\end{lemma}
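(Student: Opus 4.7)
The bound is essentially a direct algebraic manipulation using the line search acceptance criterion and Cauchy--Schwarz. Since the method does not stop at iteration $k+1$, we have $x^k\neq\Pi(g^k)$, so $\|x^k-\Pi(g^k)\|>0$. The starting point is the identity
\[
x^k - z^k = x^k - \bigl[\alpha_k\Pi(g^k)+(1-\alpha_k)x^k\bigr] = \alpha_k\bigl(x^k-\Pi(g^k)\bigr),
\]
which comes straight from \eqref{algo:hyperplane1}.

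Next, plugging this into \eqref{algo:armijo:rule2} evaluated at the accepted stepsize $\alpha = \alpha_k$ (so that $\bar z^k(\alpha_k)=z^k$), the line search rule reads
\[
\bigl\langle \widehat F(\xi^k,z^k),\,x^k-\Pi(g^k)\bigr\rangle \;\ge\; \frac{\lambda}{\beta_k}\,\|x^k-\Pi(g^k)\|^2 \;>\;0.
\]
From this strict positivity I conclude two things: first, $\widehat F(\xi^k,z^k)\neq 0$, so $\gamma_k$ is well defined; second, multiplying by $\alpha_k>0$ and using the identity above,
\[
\bigl\langle \widehat F(\xi^k,z^k),\,x^k-z^k\bigr\rangle \;=\; \alpha_k\bigl\langle \widehat F(\xi^k,z^k),\,x^k-\Pi(g^k)\bigr\rangle \;>\;0,
\]
which gives $\gamma_k>0$ (this also yields item (ii) of Lemma \ref{lemma:armijo:hyper:def} a posteriori).

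For the upper bound I would apply Cauchy--Schwarz twice. On the one hand, Cauchy--Schwarz applied to the numerator of $\gamma_k$ gives
\[
\gamma_k \;=\; \frac{\alpha_k\bigl\langle \widehat F(\xi^k,z^k),\,x^k-\Pi(g^k)\bigr\rangle}{\|\widehat F(\xi^k,z^k)\|^2} \;\le\; \frac{\alpha_k\,\|x^k-\Pi(g^k)\|}{\|\widehat F(\xi^k,z^k)\|}.
\]
On the other hand, applying Cauchy--Schwarz to the line search inequality and canceling one factor of $\|x^k-\Pi(g^k)\|$ yields the lower bound
\[
\|\widehat F(\xi^k,z^k)\| \;\ge\; \frac{\lambda}{\beta_k}\,\|x^k-\Pi(g^k)\|.
\]
Substituting this into the previous display gives $\gamma_k \le \alpha_k\beta_k/\lambda$, and $\alpha_k\le\hat\alpha$ by the backtracking construction closes out the chain $0<\gamma_k\le \alpha_k\beta_k/\lambda\le\hat\alpha\beta_k/\lambda$.

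The only point deserving attention is the \emph{strict} inequality $\gamma_k<\alpha_k\beta_k/\lambda$ asserted in the statement: the two Cauchy--Schwarz steps are simultaneously tight only when $\widehat F(\xi^k,z^k)$ is positively proportional to $x^k-\Pi(g^k)$ with proportionality constant exactly $\lambda/\beta_k$, which is the boundary case of the line search. I do not see a universal obstruction to this equality from the hypotheses, so I expect the intended reading is either (a) non-strict, or (b) strict under a benign genericity observation — either way the argument is the same chain of inequalities above, with the ``$<$'' coming from strict Cauchy--Schwarz in the typical (non-degenerate) case.
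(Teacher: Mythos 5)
Your argument is correct and essentially identical to the paper's: both proofs rest on the identity $x^k-z^k=\alpha_k\bigl(x^k-\Pi(g^k)\bigr)$, the accepted line-search inequality \eqref{algo:armijo:rule2}, and Cauchy--Schwarz, merely combined in a slightly different order (the paper sandwiches $\gamma_k$ between $\frac{\lambda}{\alpha_k\beta_k}\Vert x^k-z^k\Vert^2/\Vert\widehat F(\xi^k,z^k)\Vert^2$ and $\Vert x^k-z^k\Vert/\Vert\widehat F(\xi^k,z^k)\Vert$ and then divides by $\gamma_k$, whereas you first extract $\Vert\widehat F(\xi^k,z^k)\Vert\ge\frac{\lambda}{\beta_k}\Vert x^k-\Pi(g^k)\Vert$). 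Your caveat about strictness is well taken --- the paper's own proof silently upgrades the line-search ``$\ge$'' to a ``$>$'' in \eqref{lemma:gamma:eq2} --- but only the non-strict bound $\gamma_k\le\alpha_k\beta_k/\lambda$ is used downstream (e.g.\ in Lemma \ref{lemma:error:decay:hyper}), so nothing is lost.
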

\begin{proof}
We only need to prove the second inequality. 
The line search \eqref{algo:armijo:rule2} and the fact that $x^k-z^k=\alpha_k(x^k-\Pi(g^k))$ imply that
\begin{equation}\label{lemma:gamma:eq1}
\langle \widehat F(\xi^k,z^k),x^k-z^k\rangle\ge\frac{\lambda}{\alpha_k\beta_k}\Vert
x^k-z^k\Vert^2.
\end{equation}
From \eqref{lemma:gamma:eq1} and the definition of $\gamma_k$ we get
\begin{eqnarray}\label{lemma:gamma:eq2}
\gamma_k =\frac{\langle \widehat F(\xi^k,z^k),x^k-z^k\rangle}{\Vert\widehat F(\xi^k,z^k)\Vert^2}
> \frac{\lambda}{\alpha_k\beta_k}\frac{\Vert x^k-z^k\Vert^2}{\Vert \widehat F(\xi^k,z^k)\Vert^2},
\end{eqnarray}
while the definition of $\gamma_k$ gives
\begin{eqnarray}\label{lemma:gamma:eq3}
\gamma_k =\frac{\langle \widehat F(\xi^k,z^k),x^k-z^k\rangle}{\Vert \widehat F(\xi^k,z^k)\Vert^2}
\le \frac{\Vert\widehat F(\xi^k,z^k)\Vert\Vert x^k-z^k\Vert}{\Vert \widehat F(\xi^k,z^k)\Vert^2}
= \frac{\Vert x^k-z^k\Vert}{\Vert \widehat F(\xi^k,z^k)\Vert},
\end{eqnarray}
using the Cauchy-Schwartz inequality. Inequalities \eqref{lemma:gamma:eq2}-\eqref{lemma:gamma:eq3} imply
the claim.
\end{proof}
	
\begin{lemma}[Error decay]\label{lemma:error:decay:hyper}
Consider Assumptions \ref{assumption:holder:continuity}, \ref{assump:existence} and \ref{assump:iid:sampling:hyperplane} and \eqref{algo:noise:hyper:sample2}. Suppose that \emph{\textsf{Algorithm \ref{algorithm:DSSA:hyperplane}}} does not stop at iteration $k+1$. Then, for all $x^*\in X^*$,
\begin{eqnarray*}
\Lpddnorm{\gamma_k\langle\bar\epsilon^k_2,x^*-z^k\rangle\big|\alg_k}\lesssim\frac{1+\Vert x^k-x^*\Vert^2}{\sqrt{N_k}}.
\end{eqnarray*}
%where the constant depends on $\tilde\beta\mathsf{c}_1\sigma_{2p}(x^*)$ and $\tilde\beta\overline{L}_{2p}$ (see Theorem \ref{thm:variance:error:with:line:search} and Remark \ref{rem:constants:thm:correlated:error}).
\end{lemma}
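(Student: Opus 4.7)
My proof plan is to reduce the bound to a product of two $\mathcal{L}^2$-norms via a conditional Cauchy--Schwarz inequality, then control each factor separately. The first factor is the correlated oracle error $\|\bar\epsilon^k_2\|=\|\widehat\epsilon(\xi^k,\bar z_{\beta_k}(\xi^k;\alpha_k,x^k))\|$ which will be dispatched by Theorem \ref{thm:variance:error:with:line:search} applied conditionally on $\alg_k$; the second factor is the iterate displacement $\|z^k-x^*\|$, which is a martingale-like object and can be estimated by Lemma \ref{lemma:decay:empirical:error}.

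The key observations are: (a) by Lemma \ref{lemma:gamma} and $\beta_k\in[\hat\beta,\tilde\beta]$, we have the deterministic bound $\gamma_k\le\hat\alpha\tilde\beta/\lambda$, so $\gamma_k$ contributes only a constant factor; (b) under Assumption \ref{assump:iid:sampling:hyperplane}, $\xi^k\perp\!\!\!\perp\alg_k$ and $x^k\in\alg_k$, so conditional on $\alg_k$ the sample $\xi^k$ is i.i.d.\ from $\probn$ and $x^k$ is fixed, making Theorem \ref{thm:variance:error:with:line:search}(i) directly applicable with $\xi^N:=\xi^k$, $N:=N_k$, $x:=x^k$, $\alpha_N:=\alpha_k$, $\beta:=\beta_k$ and $p=2$. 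By Cauchy--Schwarz in $\re^d$ and again conditionally in $\mathcal{L}^2(\alg_k)$,
\[
\Lpddnorm{\gamma_k\langle\bar\epsilon^k_2,x^*-z^k\rangle\big|\alg_k}\;\lesssim\;\Lnorm{\|\bar\epsilon^k_2\|\,\big|\alg_k}\cdot\Lnorm{\|z^k-x^*\|\,\big|\alg_k}.
\]

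For the first factor, Theorem \ref{thm:variance:error:with:line:search}(i) applied to $\overline{z}_{\beta_k}$ with $p=2$ gives, using $\delta_1\vee\|x^k-x^*\|^\delta\le 1+\|x^k-x^*\|$ for $\delta\in(0,1]$,
\[
\Lnorm{\|\bar\epsilon^k_2\|\,\big|\alg_k}\;\lesssim\;\frac{\sigma_4(x^*)+\overline L_4(1+\|x^k-x^*\|)}{\sqrt{N_k}}\;\lesssim\;\frac{1+\|x^k-x^*\|}{\sqrt{N_k}}.
\]
For the second factor, since $x^*=\Pi[x^*-\beta_k T(x^*)]$ by Lemma \ref{lemma:proj}(iv), nonexpansivity of $\Pi$ (Lemma \ref{lemma:proj}(iii)), $\alpha_k\in[0,1]$, H\"older continuity of $T$ (Lemma \ref{lemma:holder:continuity:mean:std:dev}) and the definition \eqref{algo:noise:hyper:sample1} yield
\[
\|z^k-x^*\|\;\le\;\|x^k-x^*\|+\beta_k L\|x^k-x^*\|^\delta+\beta_k\|\bar\epsilon^k_1\|\;\lesssim\;1+\|x^k-x^*\|+\|\bar\epsilon^k_1\|.
\]
Taking $\Lnorm{\cdot|\alg_k}$ and invoking Lemma \ref{lemma:decay:empirical:error} with $q=2$ bounds $\Lnorm{\|\bar\epsilon^k_1\|\,|\alg_k}\lesssim(\sigma_2(x^*)+L_2\|x^k-x^*\|^\delta)/\sqrt{N_k}$, hence $\Lnorm{\|z^k-x^*\|\,|\alg_k}\lesssim 1+\|x^k-x^*\|$ (using $1/\sqrt{N_k}\le 1$ from Assumption \ref{assump:iid:sampling:hyperplane}).

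Multiplying the two estimates and absorbing cross terms via $(1+t)^2\lesssim 1+t^2$ yields the claimed bound $\lesssim(1+\|x^k-x^*\|^2)/\sqrt{N_k}$. The only delicate point in the argument is item (b) above, namely justifying the conditional application of Theorem \ref{thm:variance:error:with:line:search}: the theorem is stated for i.i.d.\ samples of $\probn$ and an $\alpha_N$ depending on the sample, which matches the regular conditional distribution of $(\xi^k,\alpha_k)$ given $\alg_k$ once $x^k$ is frozen; all remaining steps are routine. This is precisely the ``local decoupling'' recipe described at the beginning of Section \ref{section:empirical:process:theory:DSSA}.
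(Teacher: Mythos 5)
Your proposal is correct and follows essentially the same route as the paper's proof: bound $\gamma_k$ deterministically via Lemma \ref{lemma:gamma}, apply Cauchy--Schwarz, control $\Vert\bar\epsilon^k_2\Vert$ with Theorem \ref{thm:variance:error:with:line:search}(i) conditionally on $\alg_k$, control $\Vert z^k-x^*\Vert$ (equivalently $\Vert\Pi(g^k)-x^*\Vert$) via nonexpansivity of $\Pi$ and Lemma \ref{lemma:decay:empirical:error}, and absorb the cross term by H\"older. The only cosmetic difference is that you bound $\Vert z^k-x^*\Vert$ directly rather than first splitting $x^*-z^k=\alpha_k(x^*-\Pi(g^k))+(1-\alpha_k)(x^*-x^k)$, and you write the two factors as conditional $\mathcal{L}^2$-norms, which is the $p=2$ case of the paper's $\mathcal{L}^p$-norm estimates; the general case follows by replacing $2$ with $p$ throughout.
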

\begin{proof}
We denote $\widetilde z^k:=\Pi(g^k)$, so that
\begin{eqnarray}\label{lemma:error:decay:hyper:eq1}
x^*-z^k=\alpha_k(x^*-\widetilde z^k)+(1-\alpha_k)(x^*-x^k),
\end{eqnarray}
using the fact that $x^*=\alpha_kx^*+(1-\alpha_k)x^*$.  In view of
\eqref{lemma:error:decay:hyper:eq1},
we have
\begin{eqnarray}\label{lemma:error:decay:hyper:eq2}
\gamma_k\langle\bar\epsilon^k_2,x^*-z^k\rangle
&=&\gamma_k\alpha_k\langle\bar\epsilon^k_2,x^*-\widetilde z^k\rangle +
\gamma_k(1-\alpha_k)\langle\bar\epsilon^k_2,x^*-x^k\rangle\nonumber\\
&\le &\frac{\tilde\beta}{\lambda}\Vert\bar\epsilon^k_2\Vert\left(\Vert x^*-\widetilde z^k\Vert+\Vert x^*-x^k\Vert\right),
\end{eqnarray}
using the Cauchy-Schwarz inequality, Lemma \ref{lemma:gamma}, and the facts that 
$0<\alpha_k\le\hat\alpha\le1$ and $0<\beta_k\le\tilde\beta$.

Since $x^*\in X^*$, by Lemma \ref{lemma:proj}(iv), we use the fact that $x^*=\Pi[x^*-\beta_k T(x^*)]$ and the definitions of $\tilde z^k$, $g^k$ and $\bar\epsilon^k_1$ in order to obtain  
\begin{eqnarray}\label{lemma:error:decay:hyper:eq3}
\Vert\tilde z^k-x^*\Vert &=& \Vert \Pi[x^k-\beta_k (T(x^k)+\bar\epsilon^k_1)]-\Pi[x^*-\beta_k T(x^*)]\Vert\nonumber\\ 
&\le &\Vert x^k-x^* +\beta_k(T(x^*)-T(x^k))-\beta_k\bar\epsilon^k_1\Vert\nonumber\\
&\le &\Vert x^k-x^*\Vert+\tilde\beta L\Vert x^k-x^*\Vert^\delta+\tilde\beta\Vert\bar\epsilon^k_1\Vert,
\end{eqnarray}
using Lemma \ref{lemma:proj}(iii) in the first inequality, and the fact that $0<\beta_k\le\tilde\beta$ together with Lemma \ref{lemma:holder:continuity:mean:std:dev} in the last inequality. 

Using \eqref{lemma:error:decay:hyper:eq2}-\eqref{lemma:error:decay:hyper:eq3} and  the fact that
$\Vert x^k-x^*\Vert^\delta\le 1+\Vert x^k-x^*\Vert$, we take $\Lpddnorm{\cdot|\alg_k}$ and get 
\begin{eqnarray}
\Lpddnorm{\gamma_k\langle\bar\epsilon^k_2,x^*-z^k\rangle\big|\alg_k}\le
\left[\tilde\beta L+(2+\tilde\beta L)\Vert x^k-x^*\Vert\right]\frac{\tilde\beta}{\lambda}\Lpddnorm{\Vert\bar\epsilon^k_2\Vert\big|\alg_k}
+\frac{\tilde\beta^2}{\lambda}\Lpddnorm{\Vert\bar\epsilon^k_1\Vert\Vert\bar\epsilon^k_2\Vert\big|\alg_k},\nonumber\\
\label{lemma:error:decay:hyper:eq4}
\end{eqnarray}
using the fact that $x^k\in\alg_k$. By Lemma \ref{lemma:decay:empirical:error} with $q=p$ and the facts that $x^k\in\alg_k$ and $\xi^k\perp\perp\alg_k$, we get
\begin{equation}\label{lemma:error:decay:hyper:eq5}
\Lpnorm{\Vert\bar\epsilon^k_1\Vert\big|\alg_k}\le C_{p}\frac{\sigma_p(x^*)+L_p+L_p\Vert x^k-x^*\Vert}{\sqrt{N_k}},
\end{equation}
where we used the fact that $\Vert x^k-x^*\Vert^\delta\le1+\Vert x^k-x^*\Vert$. By Theorem \ref{thm:variance:error:with:line:search}, \eqref{algo:noise:hyper:sample2} and the facts that $z^k=\overline{z}_{\beta_k}(\xi^k;\alpha_k,x^k)$, $x^k\in\alg_k$ and $\alpha_k\in(0,1]$, we get  
\begin{equation}\label{lemma:error:decay:hyper:eq6}
\Lpddnorm{\Vert\bar\epsilon^k_2\Vert\big|\alg_k}\le\Lpnorm{\Vert\bar\epsilon^k_2\Vert\big|\alg_k}\lesssim \frac{\sigma_{2p}(x^*)+\Vert x^k-x^*\Vert}{\sqrt{N_k}},
\end{equation}
where we used the fact that $\delta\vee\Vert x^k-x^*\Vert^\delta\le1+\Vert x^k-x^*\Vert$. Invoking H\"older's inequality, we also get
\begin{equation}\label{lemma:error:decay:hyper:eq7}
\Lpddnorm{\Vert\bar\epsilon^k_1\Vert\Vert\bar\epsilon^k_2\Vert\big|\alg_k}\le
\Lpnorm{\Vert\epsilon^k_1\Vert\big|\alg_k}\cdot\Lpnorm{\Vert\epsilon^k_2\Vert\big|\alg_k}.
\end{equation}
Relations \eqref{lemma:error:decay:hyper:eq4}-\eqref{lemma:error:decay:hyper:eq7} prove the claim.
\end{proof}

\begin{proposition}[Stochastic quasi-Fej\'er property]\label{prop:fejer:hyper}
Consider Assumptions \ref{assumption:holder:continuity}, \ref{assump:existence}-\ref{assump:monotonicity} and \ref{assump:iid:sampling:hyperplane}. Assume that \emph{\textsf{Algorithm \ref{algorithm:DSSA:hyperplane}}} generates an infinite sequence $\{x^k\}$. Then
\begin{itemize}
\item[(i)] For all $x^*\in X^*$, there exists $c(x^*)\ge1$  such that, for all $k\in\mathbb{N}$,
\begin{eqnarray*}
\esp\big[\Vert x^{k+1}-x^*\Vert^2\big|\alg_k\big]\le \Vert x^{k}-x^*\Vert^2-\esp\big[\Vert y^k-x^k\Vert^2\big|\alg_k\big]+c(x^*)\frac{1+\Vert x^k-x^*\Vert^2}{\sqrt{N_k}}.
\end{eqnarray*}
\item[(ii)] A.s.
$\{\Vert x^k-x^*\Vert\}$ and $\{\dist(x^k,X^*)\}$ converge 
for all $x^*\in X^*$. 
In particular, $\{x^k\}$ is a.s.-bounded.
\item[(iii)]  A.s. if a cluster point of $\{x^k\}$ belongs to $X^*$ then $\lim_{k\rightarrow\infty}\dist(x^k,X^*)=0$. 
\end{itemize}
\end{proposition}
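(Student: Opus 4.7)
The plan is to obtain (i) by conditioning the recursion of Lemma~\ref{lemma:recursion:hyper} on $\alg_k$ and bounding the sole non-$\alg_k$-measurable term using Lemma~\ref{lemma:error:decay:hyper} with $p=2$. Since $x^k\in\alg_k$, the summands $\Vert x^k-x^*\Vert^2$ and $\esp[\Vert y^k-x^k\Vert^2|\alg_k]$ pass through unchanged, and the correlated noise term is controlled by
\[
\bigl|\esp[\gamma_k\langle\bar\epsilon^k_2,x^*-z^k\rangle\,|\,\alg_k]\bigr|\le\esp\bigl[|\gamma_k\langle\bar\epsilon^k_2,x^*-z^k\rangle|\,\big|\,\alg_k\bigr]=\Lpddnorm{\gamma_k\langle\bar\epsilon^k_2,x^*-z^k\rangle\,|\,\alg_k}\Big|_{p=2}\lesssim\frac{1+\Vert x^k-x^*\Vert^2}{\sqrt{N_k}},
\]
where the last step uses that $\Lpddnorm{\cdot|\alg_k}$ at $p=2$ is the $\mathcal{L}^{1}$-conditional norm. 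The implicit constant depends on $L$, $\tilde\beta$, $\lambda$, $\sigma_2(x^*)$ and $\sigma_4(x^*)$, which we absorb into the scalar $c(x^*)\ge 1$ of the statement.

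For (ii), the inequality from (i) fits the Robbins--Siegmund template of Theorem~\ref{thm:rob} with $y_k=\Vert x^k-x^*\Vert^2$, $u_k=\esp[\Vert y^k-x^k\Vert^2|\alg_k]$ and $a_k=b_k=c(x^*)/\sqrt{N_k}$. Since $\sum_k N_k^{-1/2}<\infty$ by Assumption~\ref{assump:iid:sampling:hyperplane}, this yields a.s.\ convergence of $\{\Vert x^k-x^*\Vert^2\}$ and $\sum_k u_k<\infty$ for every fixed $x^*\in X^*$. To upgrade single-point convergence to convergence of $\{\dist(x^k,X^*)\}$, I will take a countable dense subset $\{x_i^*\}\subset X^*$ (which exists since $X^*\subset\re^d$ is separable) and restrict to the full-measure event $\Omega_0$ on which $\{\Vert x^k-x_i^*\Vert\}$ converges for every $i$. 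On $\Omega_0$, the sequence $\{x^k\}$ is bounded via convergence of $\Vert x^k-x_1^*\Vert$, and a triangle-inequality/density argument shows $\ell(x^*):=\lim_k\Vert x^k-x^*\Vert$ exists for every $x^*\in X^*$. The bound $\limsup_k\dist(x^k,X^*)\le\inf_{x^*\in X^*}\ell(x^*)$ is immediate. For the matching $\liminf$, I extract a subsequence $k_j$ with $\dist(x^{k_j},X^*)\to\liminf_k\dist(x^k,X^*)$, take projections $p_{k_j}\in X^*$ (bounded on $\Omega_0$), pass to a sub-subsequence $p_{k_{j_m}}\to\bar p\in X^*$ using closedness of the solution set, and conclude $\Vert x^{k_{j_m}}-\bar p\Vert\to\liminf_k\dist(x^k,X^*)$; comparing with $\Vert x^{k_{j_m}}-\bar p\Vert\to\ell(\bar p)$ forces $\ell(\bar p)=\liminf_k\dist(x^k,X^*)$ and hence $\liminf=\limsup$.

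Part (iii) will then be routine: on $\Omega_0$, if $x^{k_j}\to\bar x\in X^*$ along some subsequence, then $0\le\dist(x^{k_j},X^*)\le\Vert x^{k_j}-\bar x\Vert\to 0$, and since the entire sequence $\{\dist(x^k,X^*)\}$ has already been shown to converge in (ii), the limit must equal zero.

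The main obstacle is step~(i): the term $\langle\bar\epsilon^k_2,x^*-z^k\rangle$ is \emph{not} a martingale difference because $z^k=\overline z_{\beta_k}(\xi^k;\alpha_k,x^k)$ depends on the fresh sample $\xi^k$ through the DS-SA line search~\eqref{algo:armijo:rule2}. This is exactly the correlated-error phenomenon flagged in~(A) of Section~\ref{section:related:proposed:work}, and it is precisely what Lemma~\ref{lemma:error:decay:hyper} (built on Theorem~\ref{thm:variance:error:with:line:search}) resolves by a local empirical-process decoupling. Once the $\mathcal{L}^{1}$-conditional bound is in hand, the remainder of the argument is the standard Robbins--Siegmund plus Fej\'er-with-separability scheme in $\re^d$.
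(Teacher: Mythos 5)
Your proposal is correct, and parts (i), (iii) and the first half of (ii) follow the paper's own route: condition the recursion of Lemma \ref{lemma:recursion:hyper} on $\alg_k$, dominate the conditional expectation of the correlated term by its conditional $\mathcal{L}^{1}$-norm (which is exactly $\Lpddnorm{\cdot|\alg_k}$ at $p=2$ from Lemma \ref{lemma:error:decay:hyper}), and then invoke Robbins--Siegmund for each fixed $x^*$. Where you genuinely diverge is in establishing convergence of $\{\dist(x^k,X^*)\}$. The paper applies Theorem \ref{thm:rob} a second time, plugging the moving projection $\bar x^k:=\Pi_{X^*}(x^k)$ into the recursion of part (i) and using a.s.\ boundedness of $\{\bar x^k\}$ to get $\sum_k c(\bar x^k)N_k^{-1/2}<\infty$; this implicitly relies on local boundedness of $x^*\mapsto c(x^*)$ (which holds via the H\"older continuity of $\sigma_q(\cdot)$ from Lemma \ref{lemma:holder:continuity:mean:std:dev}) and on $\bar x^k\in\alg_k$. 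You instead intersect the full-measure convergence events over a countable dense subset of the separable closed set $X^*$, extend $\ell(x^*):=\lim_k\Vert x^k-x^*\Vert$ to all of $X^*$ by a triangle-inequality/density argument, and identify $\liminf_k\dist(x^k,X^*)$ with $\ell(\bar p)$ for a cluster point $\bar p$ of the projections. This deterministic Fej\'er-with-separability argument is valid and has the advantage of not requiring any regularity of $x^*\mapsto c(x^*)$ or a second stochastic recursion; the paper's route is shorter but leans on those (unstated) facts. Both yield the same conclusion, and your part (iii) then coincides with the paper's.
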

\begin{proof}
i) It is an immediate consequence of Lemmas \ref{lemma:recursion:hyper}, \ref{lemma:error:decay:hyper} and the fact that $x^k\in\alg_k$, after taking $\esp[\cdot|\alg_k]$ in Lemma \ref{lemma:recursion:hyper}.

ii) Set
$
\mathsf{c}_k(x^*):=\frac{c(x^*)}{\sqrt{N_k}}.
$
From (i),  for all $k\in\mathbb{N}_0$, 
\begin{equation}\label{prop:fejer:hyper:eq1}
\esp\big[\Vert x^{k+1}-x^*\Vert^2\big|\alg_k\big]\le\left[1+\mathsf{c}_k(x^*)\right]\Vert x^k-x^*\Vert^2+\mathsf{c}_k(x^*).
\end{equation}
By Assumption \ref{assump:iid:sampling:hyperplane}, we have $\sum_k\mathsf{c}_k(x^*)<\infty$. 
Hence, from \eqref{prop:fejer:hyper:eq1} and Theorem \ref{thm:rob} we conclude that a.s. $\{\Vert x^k-x^*\Vert\}$ 
converges and, in particular, $\{x^k\}$ is bounded. 

Set $\bar x^k:=\Pi_{X^*}(x^k)$. Relation \eqref{prop:fejer:hyper:eq1} and the fact that $x^k\in\alg_k$ imply 
\begin{equation}\label{eebb}
\esp\big[\dist(x^{k+1},X^*)^2\big|\alg_k\big]\le\left[1+\mathsf{c}_k(\bar x^k)\right]\dist(x^{k},X^*)^2+\mathsf{c}_k(\bar x^k).
\end{equation}
The boundedness of $\{\bar x^k\}$ and Assumption \ref{assump:iid:sampling:hyperplane} imply that a.s. $\sum_k\mathsf{c}_k(\bar x^k)<\infty$. Hence, Theorem \ref{thm:rob} and \eqref{eebb} imply that $\{\dist(x^k,X^*)\}$ a.s.-converges.

iii) Suppose that a.s. there exists $\bar x\in X^*$ and  a subsequence $\{k_\ell\}$ such that 
$\lim_{\ell\rightarrow\infty}\Vert x^{k_\ell}-\bar x\Vert=0$. Clearly, 
$\dist(x^{k_\ell},X^*)\le\Vert x^{k_\ell}-\bar x\Vert$ a.s., and therefore it follows that 
$\lim_{\ell\rightarrow\infty}\dist(x^{k_\ell},X^*)=0$. 
By (ii), $\{\dist(x^{k},X^*)\}$ a.s.-converges and hence $\lim_{k\rightarrow\infty}\dist(x^{k},X^*)=0$. 
\end{proof}
		
We now prove asymptotic convergence of \textsf{Algorithm \ref{algorithm:DSSA:hyperplane}}.
\begin{theorem}[Asymptotic convergence]\label{thm:convergence:hyper}
Under Assumptions \ref{assumption:holder:continuity}, \ref{assump:existence}-\ref{assump:monotonicity} and \ref{assump:iid:sampling:hyperplane}, either \emph{\textsf{Algorithm \ref{algorithm:DSSA:hyperplane}}} stops at iteration $k+1$, in which case $x^k$ is a solution of \emph{VI}$(T,X)$, or it generates an infinite sequence $\{x^k\}$ that a.s. is bounded and such that $\lim_{k\rightarrow\infty}\dist(x^k,X^*)=0$. In particular, a.s. every cluster point of $\{x^k\}$ belongs to $X^*$.
\end{theorem}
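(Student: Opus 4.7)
My plan is to follow the structure of the proof of Theorem \ref{thm:convergence:armijo} and then exploit Proposition \ref{prop:fejer:hyper} to reduce the argument to identifying a single cluster point of $\{x^k\}$ in $X^*$. The stopping case is analogous: if the algorithm stops at iteration $k+1$, the rule $x^k=\Pi[x^k-\beta_k\widehat F(\xi^k,x^k)]$ together with Lemma \ref{lemma:proj}(iv) gives $\langle\widehat F(\xi^k,x^k),x-x^k\rangle\ge 0$ for all $x\in X$; since $x^k\in\alg_k$ and $\xi^k\perp\perp\alg_k$, Assumption \ref{assump:iid:sampling:hyperplane} yields $\esp[\widehat F(\xi^k,x^k)|\alg_k]=T(x^k)$, whence $x^k\in X^*$.

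For the infinite case, Proposition \ref{prop:fejer:hyper}(ii) already delivers a.s.\ boundedness of $\{x^k\}$ and a.s.\ convergence of $\{\dist(x^k,X^*)\}$, and by (iii) it suffices to exhibit almost surely one cluster point in $X^*$. I first apply Theorem \ref{thm:rob} to the recursion in Proposition \ref{prop:fejer:hyper}(i) with $y_k:=\|x^k-x^*\|^2$, $a_k=b_k:=c(x^*)/\sqrt{N_k}$ and $u_k:=\esp[\|y^k-x^k\|^2|\alg_k]$; the hypothesis $\sum_k N_k^{-1/2}<\infty$ of Assumption \ref{assump:iid:sampling:hyperplane} then gives $\sum_k u_k<\infty$ a.s. Combining this with the a.s.\ boundedness of $\{x^k\}$ and with a Lemma \ref{lemma:decay:empirical:error}-based control of $\bar\epsilon^k_1$, I extract a random subsequence $\{k_j\}$ on an event of full probability along which $x^{k_j}\to\bar x$, $\|y^{k_j}-x^{k_j}\|\to 0$, $\alpha_{k_j}\to\bar\alpha\in[0,\hat\alpha]$, $\beta_{k_j}\to\bar\beta\in[\hat\beta,\tilde\beta]$, and $\bar\epsilon^{k_j}_1\to 0$.

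The remaining step is to show $\bar x\in X^*$. Using $y^k-x^k=-\gamma_k\widehat F(\xi^k,z^k)$, the line-search estimate $\gamma_k\|\widehat F(\xi^k,z^k)\|^2\ge(\lambda/(\hat\alpha\tilde\beta))\|x^k-z^k\|^2$ obtained from \eqref{algo:armijo:rule2} and Lemma \ref{lemma:gamma}, and the a.s.\ boundedness of $\|\widehat F(\xi^{k_j},z^{k_j})\|$ from Assumption \ref{assumption:holder:continuity} applied at the bounded $z^{k_j}$, one gets $\|x^{k_j}-z^{k_j}\|\to 0$. If $\bar\alpha>0$, the identity $x^k-z^k=\alpha_k(x^k-\Pi(g^k))$ forces $\Pi(g^{k_j})-x^{k_j}\to 0$; passing to the limit using $\bar\epsilon^{k_j}_1\to 0$ and the continuity of $\Pi$ and $T$ gives $\bar x=\Pi[\bar x-\bar\beta T(\bar x)]$, so $\bar x\in X^*$ by Lemma \ref{lemma:proj}(iv). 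If $\bar\alpha=0$, the line search rejected $\alpha_{k_j}/\theta$, and $\widehat z^{k_j}=\bar z^{k_j}(\alpha_{k_j}/\theta)\to\bar x$; taking limits in the rejection inequality yields $\langle T(\bar x),\bar x-\bar z\rangle\le(\lambda/\bar\beta)\|\bar x-\bar z\|^2$ with $\bar z=\Pi[\bar x-\bar\beta T(\bar x)]$, which combined with the reverse bound $\langle T(\bar x),\bar x-\bar z\rangle\ge\|\bar x-\bar z\|^2/\bar\beta$ from Lemma \ref{lemma:proj}(v) and $\lambda<1$ forces $\bar x=\bar z\in X^*$. Finally, Proposition \ref{prop:fejer:hyper}(iii) promotes this cluster-point statement to $\lim_k\dist(x^k,X^*)=0$. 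The main obstacle is the $\bar\alpha=0$ subcase, where the noise $\bar\epsilon^{k_j}_3$ at $\widehat z^{k_j}$ is correlated with $\xi^{k_j}$; controlling it along the subsequence requires applying Theorem \ref{thm:variance:error:with:line:search} (with $\alpha_N:=\alpha_k/\theta$) together with a Borel--Cantelli extraction whose summability is exactly the $\sum_k N_k^{-1/2}<\infty$ built into Assumption \ref{assump:iid:sampling:hyperplane}.
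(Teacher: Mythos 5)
Your proposal is correct and follows essentially the same route as the paper's proof: reduce via Proposition \ref{prop:fejer:hyper}(iii) to exhibiting one cluster point in $X^*$, use Theorem \ref{thm:rob} on the quasi-Fej\'er recursion to force $\Vert y^k-x^k\Vert\to 0$, control the correlated errors $\bar\epsilon^k_2,\bar\epsilon^k_3$ through Theorem \ref{thm:variance:error:with:line:search}, and split into the cases $\bar\alpha>0$ and $\bar\alpha=0$ with exactly the same limit arguments (Lemma \ref{lemma:proj}(iv) and (v) with $\lambda<1$). The only cosmetic difference is that the paper takes full expectations and extracts a deterministic subsequence from $\mathcal{L}^2$-convergence, whereas you stay conditional and invoke Borel--Cantelli; both are valid given $\sum_k N_k^{-1/2}<\infty$.
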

\begin{proof}
If \textsf{Algorithm \ref{algorithm:DSSA:hyperplane}} stops at iteration $k$, then 
$
x^k=\Pi[x^k-\beta_k\widehat F(\xi^k,x^k)].
$
From this fact and Lemma \ref{lemma:proj}(iv) we have
\begin{equation}\label{thm:convergence:hyper:eq1}
\langle \widehat F(\xi^k,x^k),x-x^k\rangle\ge0,\quad\quad\forall x\in X.
\end{equation}
From Assumption \ref{assump:iid:sampling:hyperplane}, \eqref{equation:expected:valued:objective} and the facts that $x^k\in\alg_k$ and $\xi^k\perp\perp\alg_k$, we 
get $\esp[\widehat F(\xi^k,x^k)|\alg_k]$ $=T(x^k)$. Using this equality and the fact that $x^k\in\alg_k$, 
we take $\esp[\cdot|\alg_k]$ in \eqref{thm:convergence:hyper:eq1} and obtain
$\langle T(x^k),x-x^k\rangle\ge0$, for all $x\in X$. Hence $x^k\in X^*$.

We now suppose that the sequence $\{x^k\}$ is infinite. By Proposition \ref{prop:fejer:hyper}(iii), 
it is sufficient to show that a.s. the bounded sequence $\{x^k\}$ has a cluster point in $X^*$. Choose any $x^*\in X^*$. As in Proposition \ref{prop:fejer:hyper}, set
$
\mathsf{c}_k(x^*):=\frac{c(x^*)}{\sqrt{N_k}}.
$
Using the property that $\esp[\esp[\cdot|\alg_k]]=\esp[\cdot]$, we take the expectation in Proposition \ref{prop:fejer:hyper}(i), 
and get, for all $k\in\mathbb{N}_0$,
\begin{equation}\label{eecc}
\esp\big[\Vert x^{k+1}-x^*\Vert^2\big|\alg_k\big]\le\left[1+\mathsf{c}_k(x^*)\right]\esp\left[\Vert x^{k}-x^*\Vert^2\right]-
\esp\left[\Vert y^k-x^k\Vert^2\right]+\mathsf{c}_k(x^*).
\end{equation}
From the fact that $\sum_k\mathsf{c}_k(x^*)<\infty$ (Assumption \ref{assump:iid:sampling:hyperplane}), \eqref{eecc} and Theorem \ref{thm:rob} we conclude that
\begin{equation}\label{thm:convergence:hyper:eq2}
\sum_{k=0}^\infty\esp\left[\Vert y^k-x^k\Vert^2\right]<\infty,
\end{equation}
and that $\left\{\esp\left[\Vert x^k-x^*\Vert^2\right]\right\}$ converges. 
In particular, $\left\{\esp\left[\Vert x^k-x^*\Vert^2\right]\right\}$ is a bounded sequence.

By the definition of \textsf{Algorithm \ref{algorithm:DSSA:hyperplane}}, we have that
$
\Vert y^k-x^k\Vert^2=\langle T(z^{k})+\bar\epsilon^{k}_2,x^{k}-z^{k}\rangle^2
\Vert T(z^{k})+\bar\epsilon^{k}_2\Vert^{-2}.
$
Hence, from \eqref{thm:convergence:hyper:eq2} we get
\begin{equation}\label{thm:convergence:hyper:eq3} 
\lim_{k\rightarrow\infty}\esp\Bigg[\frac{\langle T(z^{k})+\bar\epsilon^{k}_2,x^{k}-z^{k}\rangle^2}{\Vert T(z^{k})+\bar\epsilon^{k}_2\Vert^2}\Bigg]=0.
\end{equation}

From the definitions of $\{\bar\epsilon^k_1,\bar\epsilon^k_2,\bar\epsilon^k_3\}$ in \eqref{algo:noise:hyper:sample1}-\eqref{algo:noise:hyper:sample3}, 
Lemma \ref{lemma:decay:empirical:error} with $q=p=2$, Theorem \ref{thm:variance:error:with:line:search}(i) and the facts that $z^k=\overline z_{\beta_k}(\xi^k;\alpha_k,x^k)$ and $\widehat z^k=\overline z_{\beta_k}(\xi^k;\theta^{-1}\alpha_k,x^k)$, the property that $\esp[\esp[\cdot|\alg_k]]=\esp[\cdot]$ and the boundedness of $\left\{\esp\left[\Vert x^k-x^*\Vert^2\right]\right\}$, we get 
$$
\esp\left[\Vert\bar\epsilon^k_s\Vert^2\right]\lesssim\frac{\sup_{k\in\mathbb{N}_0}\esp\left[\Vert x^k-x^*\Vert^2\right]+1}{N_k},
$$
for $s\in\{1,2,3\}$ and all $k\in\mathbb{N}_0$. Since $\lim_{k\rightarrow\infty}N_k^{-1}=0$ (Assumption \ref{assump:iid:sampling:hyperplane}), we have in particular that, for $s\in\{1,2,3\}$,
\begin{equation}\label{thm:convergence:hyper:eq4}
\lim_{k\rightarrow\infty}\esp[\Vert\bar\epsilon^k_s\Vert^2]=0.
\end{equation}
Since $\mathcal{L}^2$-convergence implies a.s.-convergence along a subsequence, 
from \eqref{thm:convergence:hyper:eq3}-\eqref{thm:convergence:hyper:eq4}, we may take a (deterministic) 
subsequence $\{k_\ell\}_{\ell=1}^\infty$ such that a.s. for $s\in\{1,2,3\}$,
\begin{eqnarray}
\lim_{\ell\rightarrow\infty}\frac{\alpha_{k_\ell}\langle T(z^{k_\ell})+\bar\epsilon^{k_\ell}_2,x^{k_\ell}-\Pi(g^{k_\ell})\rangle}{\Vert T(z^{k_\ell})+\bar\epsilon^{k_\ell}_2\Vert}=0,\label{thm:convergence:hyper:eq5}\\
\lim_{\ell\rightarrow\infty}\bar\epsilon^{k_\ell}_s=0,\label{thm:convergence:hyper:eq6}
\end{eqnarray}
using the fact that $x^k-z^k=\alpha_k[x^k-\Pi(g^k)]$. Since $\beta_k\in[\hat\beta,\tilde\beta]$ with $\hat\beta>0$, 
we may refine $\{k_\ell\}$ if necessary so that, for some $\beta>0$,
\begin{equation}\label{thm:convergence:hyper:eq7}
\lim_{\ell\rightarrow\infty}\beta_{k_\ell}=\beta.
\end{equation}

From Proposition \ref{prop:fejer:hyper}(ii), the a.s.-boundedness of the sequence $\{x^{k_\ell}\}$ 
implies that, on a set $\Omega_1$ of total probability, there exists a (random) subsequence  
$\mathfrak{N}\subset\{k_{\ell}\}_{\ell=1}^\infty$ such that
\begin{equation}\label{thm:convergence:hyper:eq8}
\lim_{k\in\mathfrak{N}}x^{k}=x^*,\\
\end{equation}	
for some (random) $x^*\in\re^d$. Using the fact that $g^k=x^k-\beta_k[T(x^k)+\bar\epsilon^k_1]$, 
\eqref{thm:convergence:hyper:eq6}-\eqref{thm:convergence:hyper:eq8} and the continuity of $T$ and $\Pi$, 
for the event $\Omega_1$, we have
\begin{equation}\label{thm:convergence:hyper:eq9}
g^*:=\lim_{k\in\mathfrak{N}}g^k=x^*-\beta T(x^*).
\end{equation}
Also, for the event $\Omega_1$, from the definition of $z^k$ in \eqref{algo:hyperplane1}, the fact that 
$\alpha_k\in(0,1]$, \eqref{thm:convergence:hyper:eq6} and \eqref{thm:convergence:hyper:eq8}-\eqref{thm:convergence:hyper:eq9},
we get that $\{T(z^k)+\bar\epsilon^k_2\}_{k\in\mathfrak{N}}$ is bounded so that, since \eqref{thm:convergence:hyper:eq5}, we obtain
\begin{equation}\label{thm:convergence:hyper:eq10}
\lim_{k\in\mathfrak{N}}\alpha_{k}\langle T(z^{k})+\bar\epsilon^{k}_2,x^{k}-\Pi(g^{k})\rangle=0.
\end{equation}

We now consider two cases for the event $\Omega_1$.

\textbf{\textsf{Case (i)}}: $\lim_{k\in\mathfrak{N}}\alpha_{k}\neq 0$. In this case, we may refine $\mathfrak{N}$ 
if necessary, and find some (random) $\bar\alpha >0$ such that $\alpha_k\ge\bar\alpha$ 
for all $k\in\mathfrak{N}$. It follows from \eqref{thm:convergence:hyper:eq10} that on $\Omega_1$, 

\begin{equation}\label{thm:convergence:hyper:eq11}
\lim_{k\in\mathfrak{N}}\langle T(z^{k})+\bar\epsilon^{k}_2,x^{k}-\Pi(g^{k})\rangle
= 0.
\end{equation}
From \eqref{algo:armijo:rule2}-\eqref{algo:hyperplane1}, we get 
\begin{equation}\label{thm:convergence:hyper:eq12}
\langle T(z^{k})+\bar\epsilon^{k}_2,x^{k}-\Pi(g^{k})\rangle\ge
	\frac{\lambda}{\beta_{k}}\Vert x^{k}-\Pi(g^{k})\Vert^2
\ge\frac{\lambda}{\tilde\beta}\Vert x^{k}-\Pi(g^{k})\Vert^2
\end{equation}
for all $k$.
Relations \eqref{thm:convergence:hyper:eq11}-\eqref{thm:convergence:hyper:eq12} imply that, on $\Omega_1$,
\begin{equation}\label{thm:convergence:hyper:eq13}
0=\lim_{k\in\mathfrak{N}}\Vert x^{k}-\Pi(g^{k})\Vert.
\end{equation}
From \eqref{thm:convergence:hyper:eq8}-\eqref{thm:convergence:hyper:eq9}, we take limits in \eqref{thm:convergence:hyper:eq13} and obtain, by continuity of $\Pi$,
$$
0=\Vert x^*-\Pi[x^*-\beta T(x^*)]\Vert.
$$
Therefore, $x^* = \Pi[x^*-\beta T(x^*)]$,
so that $x^*\in X^*$ by Lemma \ref{lemma:proj}(iv). 

\textbf{\textsf{Case (ii)}}: $\lim_{k\in\mathfrak{N}}\alpha_{k}=0$. In this case we have
\begin{equation}\label{thm:convergence:hyper:eq14}
\lim_{{k\in\mathfrak{N}}}\theta^{-1}\alpha_k=0.
\end{equation}
Since $\widehat z^k:=\theta^{-1}\alpha_k\Pi(g^k)+(1-\theta^{-1}\alpha_k)x^k$ and $\{g^k\}_{{k\in\mathfrak{N}}}$ is bounded, we get from \eqref{thm:convergence:hyper:eq8} and \eqref{thm:convergence:hyper:eq14} that
\begin{equation}\label{thm:convergence:hyper:eq15}
\lim_{k\in\mathfrak{N}}\widehat z^{k}=x^*.
\end{equation}
Observe that, by the definition of the line search rule \eqref{algo:armijo:rule2} and \eqref{algo:noise:hyper:sample3}, we have 
\begin{equation}\label{thm:convergence:hyper:eq16}
\langle T(\widehat z^k)+\bar\epsilon^k_3,x^k-\Pi(g^k)\rangle <
\frac{\lambda}{\beta_k}\Vert x^k-\Pi(g^k)\Vert^2,
\end{equation}
for all $k\in\mathbb{N}_0$.
We take limit in \eqref{thm:convergence:hyper:eq16} along $\mathfrak{N}$, and we get, 
using the continuity of $T$ and $\Pi$ and relations \eqref{thm:convergence:hyper:eq6}-\eqref{thm:convergence:hyper:eq9} and \eqref{thm:convergence:hyper:eq15} that 
\begin{equation}\label{thm:convergence:hyper:eq17}
\langle T(x^*),x^*- \Pi(g^*)\rangle\le
\frac{\lambda}{\beta}\Vert x^*-\Pi(g^*)\Vert^2.
\end{equation}
Since the sequence $\{x^k\}$ is feasible and $X$ is closed, the limit point $x^*$ belongs to $X$. 
Thus, from \eqref{thm:convergence:hyper:eq17} and Lemma \ref{lemma:proj}(v), we get that, on $\Omega_1$,
\begin{equation}
\label{thm:convergence:hyper:eq18}
\lambda\Vert x^*-\Pi(g^*)\Vert^2\ge\beta\langle T(x^*),x^* -\Pi(g^*)\rangle
=\langle x^*-g^*,x^*-\Pi(g^*)\rangle
\ge\Vert x^*-\Pi(g^*)\Vert^2.
\end{equation}
Since $\lambda\in (0,1)$,   
\eqref{thm:convergence:hyper:eq18}
implies that
$\Vert x^*-\Pi(g^*)\Vert =0$. Hence, in view of \eqref{thm:convergence:hyper:eq9}, we have
$x^* = \Pi(x^* - \beta T(x^*))$.
By Lemma \ref{lemma:proj}(iv), we conclude that $x^*\in X^*$.

We have proved that on the event $\Omega_1$ of total probability, both in case (i) and in case (ii), 
$\{x^k\}$ has a cluster point which solves VI($T$,$X$). The claim follows from Proposition \ref{prop:fejer:hyper}(iii).
\end{proof}

\section{Discussion on the complexity constants of \textsf{Algorithm \ref{algorithm:DSSA:extragradient}}}\label{section:conclusion}
Suppose the oracle is \emph{exact}. In that case, \textsf{Algorithm \ref{algorithm:DSSA:extragradient}} would have essentially the same rate estimates, up to universal constants and a factor of $\mathcal{O}(\ln L)$ in the oracle complexity, either if a line search scheme is used or a CSP is used with a known Lipschitz constant (LC). The reason is that the Lipschitz continuity is only related to the \emph{smoothness class} of the operator. The situation is different when the oracle is \emph{stochastic}: the Lipschitz continuity also quantifies the \emph{spread of the oracle's error variance}.\footnote{This is true either for the martingale difference errors $\{\epsilon^k_i\}_{i=1,2}$ or the correlated error $\epsilon^k_3$ in \eqref{equation:oracle:errors:DSSA:extragradient}. The Lipschitz continuity in the analysis of $\epsilon^k_3$ is crucial in our chaining and self-normalization arguments of Lemmas \ref{lemma:lnorm:process} and \ref{lemma:error:decay:empirical:process}.} Consequently, the lack of knowledge of the LC is much more demanding in the stochastic case. It is instructive to compare the complexity constants when the LC is known or not. In the following, we recall the rate of convergence of Theorem \ref{thm:rate:convergence} and the constants defined in Assumption \ref{assumption:holder:continuity}, Theorem \ref{thm:variance:error:with:line:search}, Lemma \ref{lemma:decay:empirical:error}, Remarks \ref{rem:constants:thm:correlated:error} and \ref{rem:constants:A:armijo} and Proposition \ref{prop:A:armijo} with $p=2$.

Suppose first the LC is known. This was already considered in \cite{iusem:jofre:oliveira:thompson2017} under a more general condition than Assumption \ref{assumption:holder:continuity}. However, it leads to weaker complexity constants as argued in the following.\footnote{See Assumption 3.8 in \cite{iusem:jofre:oliveira:thompson2017}. Differently than Lemma \ref{lemma:holder:continuity:mean:std:dev}, it allows the multiplicative noise to depend on the reference point $x^*\in X^*$.} It is possible to show that if the stronger but fairly general condition of Lemma \ref{lemma:holder:continuity:mean:std:dev} holds and $\hat\alpha=\mathcal{O}(\frac{1}{L_2})$, then the rate statement of Theorem \ref{thm:rate:convergence} and the estimates \eqref{equation:J}-\eqref{thm:rate:convergence:k0} are valid when we replace $\sigma_4(x^*)$ by $\sigma_2(x^*)$, $\overline{L}_4$ by $L_2$ and\footnote{Up to universal constants, $\mathsf{C}_2$ and $\mathsf{\overline{C}}_2$ are unchanged.} the coefficient $(1-6\lambda^2)[(\lambda\theta)\wedge\hat\alpha]$ by a term of order $1-\mathcal{O}(1)(\hat\alpha L_2)^2$. Since $\hat\alpha L_2\lesssim1$ we also have $\mathsf{C}_2\lesssim1$ and $\mathsf{\overline {C}}_2\lesssim1$. Assuming $L_2$ is known, we obtain a property not satisfied by the estimates in \cite{iusem:jofre:oliveira:thompson2017}: $k_0$ in \eqref{thm:rate:convergence:k0} is \emph{independent of the oracle's error variances $\{\sigma_2(x)^2\}_{x\in X}$ over $X$} and there exist $b$, $N$ and $\mu$ and policy $\hat\alpha=\mathcal{O}(\frac{1}{L_2})$ such that $k_0:=0$. It is then possible to obtain the rate
\begin{eqnarray}
\min_{i=0,\ldots,k}\esp\left[r(x^i)^2\right]
\lesssim\frac{L_2^2\Vert x^0-x^*\Vert^2+\sigma_2(x^*)^2}{k},\label{equation:rate:known:LC}
\end{eqnarray}
which depends only on the \emph{local} variance $\sigma_2(x^*)^2$ and the \emph{initial} iterate $x^0$. This can be seen as a \emph{variance localization property}. We note that the above rate is sharper than those obtained in \cite{iusem:jofre:oliveira:thompson2017} (See Section 3.4.1 in \cite{iusem:jofre:oliveira:thompson2017}.)\footnote{In \cite{iusem:jofre:oliveira:thompson2017}, given $x^*\in X^*$, the rate is of the order of $\sigma(x^*)^4\cdot\max_{0\le i\le k_0(x^*)}\esp[\Vert x^i-x^*\Vert^2]$, where $k_0(x^*)\in\mathbb{N}_0$ depends on $\sigma(x^*)$. See Assumption 3.8 in \cite{iusem:jofre:oliveira:thompson2017} for the definition of $\sigma(x^*)$.}

Consider now the more challenging regime when the LC is unknown. As expected, the constants in the rate of Theorem \ref{thm:rate:convergence} are less sharp then the ones in \eqref{equation:rate:known:LC}. First, \eqref{equation:rate:known:LC} is not explicitly dependent on the dimension $d$. In terms of dimension, the rate in Theorem \ref{thm:rate:convergence} is of $\mathcal{O}(\frac{d}{N})$ and, thus, it is valid in the large sample regime $N:=\mathcal{O}(d)$. This is a manifestation of our need to treat correlated errors when using a line search scheme. Such scheme is an inner statistical estimator for the LC. Second, if we set $\mathsf{M}:=(\hat\alpha\Lqrtnorm{\mathsf{L}(\xi)})^2$, then the constants in the rate of Theorem \ref{thm:rate:convergence} satisfy $\mathsf{C}_2\lesssim \frac{\mathsf{M}}{N}$, $\mathsf{\overline{C}}_2\lesssim\textsf{M}$ and $\frac{(\hat\alpha\widetilde L_2)^2\mathsf J}{N}\lesssim\mathsf{M}^2\mathsf{J}$, for a general\footnote{The given order of dependence on $\mathsf{M}$ for an unbounded $X$ is an artifact of our proof techniques. We believe a sharper dependence can be obtained via more sophisticated concentration inequalities (instead of moment inequalities).} $X$ and $\mathsf{C}_2\lesssim1$, $\mathsf{\overline{C}}_2\lesssim1$ and $\frac{(\hat\alpha\widetilde L_2)^2\mathsf{J}}{N}\lesssim \mathsf{M}\diam(X)^2$, for a compact $X$. Observe that a line search scheme can only estimate a \emph{lower bound} for $\Lqrtnorm{\mathsf{L}(\xi)}$. For a large $\hat\alpha$, the lack of an \emph{upper bound} leads to a rate with larger constants when compared to \eqref{equation:rate:known:LC}. This is a manifestation of our absence of information of the LC. Note that robust methods are expected to have nonoptimal constants since the endogenous parameters are unknown \cite{nem:jud:lan:shapiro2009}. Third, note that \eqref{equation:rate:known:LC} only depends on the \emph{initial} iterate $x^0$. This is possible since $k_0$ in \eqref{thm:rate:convergence:k0} can be calibrated using the knowledge of the LC. For an unknown LC and for an unbounded $X$, $k_0$ depends on $\mathsf{M}$ but it still \emph{independent of the oracle's error moments $\{\sigma_4(x)\}_{x\in X}$ over $X$}. Differently than \eqref{equation:rate:known:LC}, for a large $\hat\alpha$ (implying a larger value for $\mathsf{M}$), the rate in Theorem \ref{thm:rate:convergence} will depend on $D_{k_0}^2(x^*):=\max_{k=0,\ldots k_0}\esp[\Vert x^k-x^*\Vert^2]$ for a possibly large $k_0$. Although not as sharp as \eqref{equation:rate:known:LC}, the resulted rate estimate for a large $k_0$ is not a limiting issue. It is still in accordance to, and in fact generalize, previous estimates which rely on compactness of $X$ (see e.g. \cite{nem:jud:lan:shapiro2009}): for a compact $X$, we have $\max_{k=0,\ldots k_0}\esp[\Vert x^k-x^*\Vert^2]\le\diam(X)^2$.

\section*{Appendix}

\begin{proof}[Proof of Lemma \ref{lemma:holder:continuity:mean:std:dev}]
By Jensen's inequality and Assumption \ref{assumption:holder:continuity} we get
\begin{eqnarray*}
\Vert T(x)-T(x_*)\Vert\le\esp\left[\Vert F(\xi,x)-F(\xi,x_*)\Vert\right]
\le\esp[\mathsf{L}(\xi)]\Vert x-y\Vert^\delta.
\end{eqnarray*}
Using this fact and definition \eqref{equation:oracle:error}, we get
\begin{eqnarray*}
\Lqnorm{\Vert\epsilon(\xi,x)\Vert}&\le &\Lqnorm{\Vert F(\xi,x)-F(\xi,x_*)\Vert}+\Lqnorm{\Vert F(\xi,x_*)-T(x_*)\Vert}+\Lqnorm{\Vert T(x)-T(x_*)\Vert}\nonumber\\
&\le &\Lqnorm{\mathsf{L}(\xi)\Vert x-x_*\Vert^\delta}+\Lqnorm{\Vert\epsilon(\xi,x_*)\Vert}+L\Vert x-x_*\Vert^\delta\nonumber\\
&=&\Lqnorm{\Vert\epsilon(\xi,x_*)\Vert}+\left(\Lqnorm{\mathsf{L}(\xi)}+L\right)\Vert x-y\Vert^\delta,
\end{eqnarray*}
where we used the triangle inequality for $\Vert\cdot\Vert$ and Minkowski's inequality for $\Lqnorm{\cdot}$. The claim is proved from the above fact,  \eqref{equation:oracle:error:variance} and $L_q=\Lqnorm{\mathsf{L}(\xi)}+L$.
\end{proof}

\begin{proof}[Proof of Lemma \ref{lemma:recursion:armijo}]
By \eqref{algo:extragradient:armijo1}-\eqref{algo:extragradient:armijo2}, we invoke twice Lemma \ref{lemma:proj}(i) with $v:=\alpha_k\widehat F(\xi^k,x^k)$, $x:=x^k$ and $z:=z^k$ and with $v:=\alpha_k\widehat F(\eta^k,z^k)$, $x:=x^k$ and $z:=x^{k+1}$, obtaining, for all $x\in X$, 
\begin{eqnarray}
2\langle\alpha_k\widehat F(\xi^k,x^k),z^k-x\rangle &\le &\Vert x^k-x\Vert^2-\Vert z^k-x\Vert^2-\Vert z^k-x^k\Vert^2,\label{equation:lemma:recursion:second:eq1}\\
2\langle\alpha_k\widehat F(\eta^k,z^k),x^{k+1}-x\rangle &\le &\Vert x^k-x\Vert^2-\Vert x^{k+1}-x\Vert^2-\Vert x^{k+1}-x^k\Vert^2.\label{equation:lemma:recursion:second:eq2}
\end{eqnarray}

We now set $x:=x^{k+1}$ in \eqref{equation:lemma:recursion:second:eq1} and sum the obtained relation with \eqref{equation:lemma:recursion:second:eq2} eliminating $\Vert x^k-x^{k+1}\Vert^2$. We thus get, for all $x\in X$,
\begin{eqnarray*}
\mathsf{I}&:=&2\langle\alpha_k\widehat F(\xi^k,x^k),z^k-x^{k+1}\rangle+2\langle\alpha_k\widehat F(\eta^k,z^k),x^{k+1}-x\rangle\\
&\le &\Vert x^k-x\Vert^2-\Vert x^{k+1}-x\Vert^2-\Vert z^k-x^{k+1}\Vert^2-\Vert z^k-x^k\Vert^2.
\end{eqnarray*}
Using definitions \eqref{equation:oracle:error}, \eqref{equation:empirical:mean:operator:&:error} and \eqref{equation:oracle:errors:DSSA:extragradient}, we have
\begin{eqnarray*}
\mathsf{I}&=&2\alpha_k\langle\widehat F(\xi^k,x^k)-\widehat F(\eta^k,z^k),z^k-x^{k+1}\rangle+2\langle\alpha_k\widehat F(\eta^k,z^k),z^k-x\rangle\\
&=&2\alpha_k\langle\widehat F(\xi^k,x^k)-\widehat F(\eta^k,z^k),z^k-x^{k+1}\rangle+2\alpha_k\langle T(z^k),z^k-x\rangle+2\alpha_k\langle\epsilon^k_2,z^k-x\rangle,
\end{eqnarray*}
The two previous relations imply that, for all $\in X$, 
\begin{eqnarray}
2\alpha_k\langle T(z^k),z^k-x\rangle &\le &2\alpha_k\langle\widehat F(\eta^k,z^k)-\widehat F(\xi^k,x^k),z^k-x^{k+1}\rangle+2\alpha_k\langle\epsilon^k_2,x-z^k\rangle\nonumber\\
&&+\Vert x^k-x\Vert^2-\Vert x^{k+1}-x\Vert^2-\Vert z^k-x^{k+1}\Vert^2-\Vert z^k-x^k\Vert^2\nonumber\\
&\le &2\alpha_k\Vert\widehat F(\eta^k,z^k)-\widehat F(\xi^k,x^k)\Vert\Vert z^k-x^{k+1}\Vert
+2\alpha_k\langle\epsilon^k_2,x-z^k\rangle\nonumber\\
&&+\Vert x^k-x\Vert^2-\Vert x^{k+1}-x\Vert^2-\Vert z^k-x^{k+1}\Vert^2-\Vert z^k-x^k\Vert^2\nonumber\\
&\le &2\alpha_k^2\Vert\widehat F(\eta^k,z^k)-\widehat F(\xi^k,x^k)\Vert^2+2\alpha_k\langle\epsilon^k_2,x-z^k\rangle\nonumber\\
&&+\Vert x^k-x\Vert^2-\Vert x^{k+1}-x\Vert^2-\Vert z^k-x^k\Vert^2,\label{equation:lemma:recursion:second:eq3}
\end{eqnarray}
where we used Cauchy-Schwartz in second inequality and Lemma \ref{lemma:proj}(iii) with \eqref{algo:extragradient:armijo1}-\eqref{algo:extragradient:armijo2} in the third inequality. 

Concerning the first term in the rightmost expression in \eqref{equation:lemma:recursion:second:eq3}, we have
\begin{eqnarray}
\alpha_k^2\Vert\widehat F(\eta^k,z^k)-\widehat F(\xi^k,x^k)\Vert^2 &\le &
3\alpha_k^2\Vert\widehat F(\xi^k,z^k)-\widehat F(\xi^k,x^k)\Vert^2\nonumber\\
&+&3\alpha_k^2\Vert\widehat F(\eta^k,z^k)-T(z^k)\Vert^2+3\alpha_k^2\Vert\widehat F(\xi^k,z^k)-T(z^k)\Vert^2\nonumber\\
&\le &3\lambda^2\Vert z^k-x^k\Vert^2+3\hat\alpha^2\Vert\epsilon^k_2\Vert^2+3\hat\alpha^2\Vert\epsilon^k_3\Vert^2,\label{equation:lemma:recursion:second:eq4}
\end{eqnarray}
using triangle inequality and the fact that $(\sum_{i=1}^3a_i)^2\le3\sum_{i=1}a_i^2$ in the first inequality and the line search \eqref{algo:armijo:rule} and definitions in \eqref{equation:oracle:error}, \eqref{equation:empirical:mean:operator:&:error} and \eqref{equation:oracle:errors:DSSA:extragradient} in the last inequality.

From $z^k=\Pi[x^k-\alpha_k(T(x^k)+\epsilon^k_1)]$ and Lemma \ref{lemma:residual:decrease} with $\alpha_k\in(0,1]$, we also have
\begin{eqnarray}	
\alpha_k^2 r(x^k)^2&\le & r_{\alpha_k}(x^k)^2\nonumber\\
&=&\Vert x^k-\Pi[x^k-\alpha_kT(x^k)]\Vert^2\nonumber\\
&\le &2\Vert x^k-z^k\Vert^2+2\Vert\Pi[x^k-\alpha_k(T(x^k)+\epsilon^k_1)]-\Pi[x^k-\alpha_k T(x^k)]\Vert^2\nonumber\\
&\le &2\Vert x^k-z^k\Vert^2+2\hat\alpha^2\Vert\epsilon^k_1\Vert^2\label{equation:lemma:recursion:second:eq5},
\end{eqnarray}
where we used Lemma \ref{lemma:proj}(iii) in the second inequality. The claim is proved using relations \eqref{equation:lemma:recursion:second:eq3}-\eqref{equation:lemma:recursion:second:eq5} with $x:=x^*$, for a given $x^*\in X^*$, definitions \eqref{def:A:armijo}-\eqref{def:M:armijo} and the facts that $0<1-6\lambda^2<1$ (see \textsf{Algorithm \ref{algorithm:DSSA:extragradient}}) and $\langle T(z^k),z^k-x^*\rangle\ge0$, which follows from $\langle T(x^*),z^k-x^*\rangle\ge0$ (since $x^*\in X^*$) and Assumption \ref{assump:monotonicity}.
\end{proof}

\begin{proof}[Proof of Proposition \ref{prop:A:armijo}]
First, we obtain a bound on $\Vert z^k-x^*\Vert$ similar to \eqref{lemma:error:decay:emp:eq1} in the proof of Theorem \ref{thm:variance:error:with:line:search} and then take $\Lpnorm{\cdot|\alg_k}$. Indeed, using the facts that $z^k=z(\xi^k;\alpha_k,x^k)$, $\epsilon^k_1=\widehat\epsilon(\xi^k,x^k)$ and $x^k\in\alg_k$, we obtain
\begin{equation}\label{prop:A:armijo:eq1}
\Lpnorm{\Vert z^k-x^*\Vert|\alg_k}\le(1+L\hat\alpha)\Vert x^k-x^*\Vert+\hat\alpha\Lpnorm{\Vert\epsilon^k_1\Vert|\alg_k}.
\end{equation}
Lemma \ref{lemma:decay:empirical:error} with $q=p$, \eqref{equation:oracle:errors:DSSA:extragradient} and the facts that $x^k\in\alg_k$ and $\xi^k\perp\perp\alg_k$ imply that
\begin{equation}\label{prop:A:armijo:eq2}
\Lpnorm{\Vert\epsilon^k_1\Vert|\alg_k}\le C_p\frac{\sigma_p(x^*)+L_p\Vert x^k-x^*\Vert}{\sqrt{N_k}}.
\end{equation}
Lemma \ref{lemma:decay:empirical:error} with $q=p$, \eqref{equation:oracle:errors:DSSA:extragradient} and the facts that $z^k\in\widehat{\alg}_k$, $\eta^k\perp\perp\widehat{\alg}_k$ and $\Lpnorm{\Lpnorm{\cdot|\widehat{\alg}_k}|\alg_k}=\Lpnorm{\cdot|\alg_k}$ imply that
\begin{equation}\label{prop:A:armijo:eq3}
\Lpnorm{\Vert\epsilon^k_2\Vert|\alg_k}=\Lpnorm{\Lpnorm{\Vert\epsilon^k_2\Vert\big|\widehat{\alg}_k}\Big|\alg_k}\le C_p\frac{\sigma_p(x^*)+L_p\Lpnorm{\Vert z^k-x^*\Vert|\alg_k}}{\sqrt{N_k}}.
\end{equation}

Finally, Theorem \ref{thm:variance:error:with:line:search}(i), \eqref{equation:oracle:errors:DSSA:extragradient}, Assumption \ref{assump:iid:sampling}, $0<\alpha_k\le\hat\alpha\le1$ and the facts that $z^k=z(\xi^k;\alpha_k,x^k)$, $x^k\in\alg_k$ and $\xi^k\perp\perp\alg_k$ imply that 
\begin{equation}\label{prop:A:armijo:eq4}
\Lpnorm{\Vert\epsilon^k_3\Vert|\alg_k}=\Lpnorm{\left\Vert\widehat\epsilon\left(\xi^k,z(\xi^k;\alpha_k,x^k)\right)\right\Vert\big|\alg_k}\le \frac{\mathsf{c}_1\sigma_{2p}(x^*)+\overline{L}_{2p}\Vert x^k-x^*\Vert}{\sqrt{N_k}}.
\end{equation}

The required claim is proved by putting together relations \eqref{def:A:armijo},  \eqref{prop:A:armijo:eq1}-\eqref{prop:A:armijo:eq4} and using the facts that $\Lpddnorm{a^2|\alg_k}=\Lpnorm{a|\alg_k}^2$, $(a+b)^2\le2a^2+2b^2$, $\overline{L}_{2p}>L_pC_p$, $\mathsf{c}_1>C_p$ (as defined in Assumption \ref{assumption:holder:continuity}, Theorem \ref{thm:variance:error:with:line:search}, Lemma \ref{lemma:decay:empirical:error} and Remark \ref{rem:constants:thm:correlated:error}) and $\sigma_{2p}(x^*)\ge\sigma_p(x^*)$.

The proof for the case $X$ is compact is analogous but replacing \eqref{prop:A:armijo:eq1} by the facts that $\Vert x^k-x^*\Vert\le\diam(X)$ and $\Vert z^k-x^*\Vert\le\diam (X)$ and replacing \eqref{prop:A:armijo:eq4} by the bound of Theorem \ref{thm:variance:error:with:line:search}(ii). 
\end{proof}
\begin{remark}[Constants of Proposition \ref{prop:A:armijo}]\label{rem:constants:A:armijo}
Recall definitions in Assumption \ref{assumption:holder:continuity}, \textsf{Algorithm \ref{algorithm:DSSA:extragradient}}, Theorem \ref{thm:variance:error:with:line:search}, Lemma \ref{lemma:decay:empirical:error} and Remark \ref{rem:constants:thm:correlated:error}. Let $\mathsf{G}_{p}:=\sup_k\frac{C_pL_p\hat\alpha}{\sqrt{N}_k}$. The constants in Proposition \ref{prop:A:armijo} are given, for a general $X$, by\footnote{For simplicity we do not explore the decay with $N_k^{-1}$ in the mentioned constants.} 
\begin{eqnarray*}
\mathsf{C}_{p}:=2\mathsf{c}_1^2\left[6\left(1+\mathsf{G}_{p}\right)^2+7-6\lambda^2\right],\quad\quad
\mathsf{\overline C}_{p}:=2\left[6\left(1+L\hat\alpha+\mathsf{G}_{p}\right)^2+7-6\lambda^2\right].
\end{eqnarray*}
For a compact $X$, the constants are $\mathsf{C}_p:=(26-12\lambda^2)C_p^2$ and $\mathsf{\overline{C}}_p:=26-12\lambda^2$.
\end{remark}

\begin{proof}[Proof of Lemma \ref{lemma:recursion:hyper}]
By Lemma \ref{lemma:armijo:hyper:def}(ii), we have that $\gamma_k>0$. Thus
\begin{eqnarray}
\Vert x^{k+1}-x^*\Vert^2 & = & \Vert \Pi(y^k)-x^*\Vert^2\nonumber\\
&\le & \Vert y^k-x^*\Vert^2-\Vert y^k-\Pi(y^k)\Vert^2\nonumber\\
&\le & \Vert y^k-x^*\Vert^2\nonumber\\
&=& \Vert (x^k-x^*)-\gamma_k(T(z^k)+\bar\epsilon^k_2)\Vert^2\nonumber\\
&=& \Vert x^k-x^*\Vert^2+\gamma_k^2\Vert T(z^k)+\bar\epsilon^k_2\Vert^2
-2\gamma_k\langle T(z^k)+\bar\epsilon^k_2,x^k-x^*\rangle,\label{lemma:recursion:hyper:eq1}
\end{eqnarray}
using Lemma \ref{lemma:proj}(ii) in the first inequality.  
Concerning the last term in the rightmost expression of \eqref{lemma:recursion:hyper:eq1}, we have
\begin{eqnarray}
-2\gamma_k\langle T(z^k)+\bar\epsilon^k_2,x^k-x^*\rangle 
&=& -2\gamma_k\langle T(z^k)+\bar\epsilon^k_2,x^k-z^k\rangle+\nonumber\\
&&2\gamma_k\langle T(z^k),x^*-z^k\rangle+2\gamma_k\langle\bar\epsilon^k_2,x^*-z^k\rangle
\nonumber\\
&=& -2\gamma_k(\gamma_k\Vert T(z^k)+\bar\epsilon^k_2\Vert^2)\nonumber\\
&&+2\gamma_k\langle T(z^k),x^*-z^k\rangle+2\gamma_k\langle\bar\epsilon^k_2,x^*-z^k\rangle\nonumber\\
&\le & -2\gamma_k^2\Vert T(z^k)+\bar\epsilon^k_2\Vert^2+2\gamma_k\langle\bar\epsilon^k_2,x^*-z^k\rangle,\label{lemma:recursion:hyper:eq2}
\end{eqnarray}
using
the definition of $\gamma_k$ 
in the second equality,   
and the facts that
$\gamma_k>0$ and $\langle T(z^k),x^*-z^k\rangle\le0$ (which follows from 
the pseudo-monotonicity of $T$, and the facts $x^*\in X^*$, $z^k\in X$) 
in the inequality. 
Combining \eqref{lemma:recursion:hyper:eq1}-\eqref{lemma:recursion:hyper:eq2} we get
\begin{eqnarray}
\Vert x^{k+1}-x^*\Vert^2 
&\le & \Vert x^k-x^*\Vert^2+\gamma_k^2\Vert T(z^k)+\bar\epsilon^k_2\Vert^2
-2\gamma_k^2\Vert T(z^k)+\bar\epsilon^k_2\Vert^2+
2\gamma_k\langle\bar\epsilon^k_2,x^*-z^k\rangle\nonumber\\
&=& \Vert x^k-x^*\Vert^2-\gamma_k^2\Vert T(z^k)+\bar\epsilon^k_2\Vert^2+ 2\gamma_k\langle\bar\epsilon^k_2,x^*-z^k\rangle\nonumber\\
&=& \Vert x^k-x^*\Vert^2-\Vert y^k-x^k\Vert^2+ 2\gamma_k\langle\bar\epsilon^k_2,x^*-z^k\rangle,\label{lemma:recursion:hyper:eq3}
\end{eqnarray}
using the fact that $\Vert y^k-x^k\Vert=\gamma_k\Vert T(z^k)+\epsilon^k_2\Vert$ (which follows from the 
definition of $\gamma_k$), in the last equality.
\end{proof}

\end{document}